\definecolor{Gray}{gray}{0.80}
\definecolor{LightGray}{gray}{0.90}
\newcommand{\cA}{\mathcal{A}}
\newcommand{\cB}{\mathcal{B}}
\newcommand{\cC}{\mathcal{C}}
\newcommand{\cD}{\mathcal{D}}
\newcommand{\cG}{\mathcal{G}}
\newcommand{\cH}{\mathcal{H}}
\newcommand{\cJ}{\mathcal{J}}
\newcommand{\cL}{\mathcal{L}}
\newcommand{\cM}{\mathcal{M}}
\newcommand{\cO}{\mathcal{O}}
\newcommand{\cP}{\mathcal{P}}
\newcommand{\cU}{\mathcal{U}}
\newcommand{\cV}{\mathcal{V}}
\newcommand{\bE}{\mathbb{E}}
\newcommand{\bH}{\mathbb{H}}
\newcommand{\bN}{\mathbb{N}}
\newcommand{\bR}{\mathbb{R}}
\newcommand{\bZ}{\mathbb{Z}}
\newcommand{\bfH}{\mathbf{H}}
\newcommand{\bfR}{\mathbf{R}}
\newcommand{\bfV}{\mathbf{V}}
\newcommand{\PR}{\mathbb{P}}
\newcommand{\bONE}{\mathbbm{1}}
\newcommand{\dd}{ \mathrm{d}}
\DeclareMathOperator*{\LIM}{LIM} 
\DeclareMathOperator*{\subLIM}{subLIM}
\DeclareMathOperator*{\superLIM}{superLIM}
\renewcommand{\epsilon}{\varepsilon}
\newcommand{\vn}[1]{\left| \! \left| #1\right| \! \right|}
\newcommand{\ip}[2]{\langle #1,#2\rangle}
\numberwithin{equation}{section}
\newtheorem{theorem}{Theorem}[section]
\newtheorem{lemma}[theorem]{Lemma}
\newtheorem{proposition}[theorem]{Proposition}
\theoremstyle{definition}
\newtheorem{definition}[theorem]{Definition}
\newtheorem{remark}[theorem]{Remark}
\newtheorem{assumption}[theorem]{Assumption}
\newtheorem{example}[theorem]{Example}
\newtheorem{condition}[theorem]{Condition}
\title{Well-posedness of Hamilton-Jacobi equations in population dynamics and applications to large deviations}
\author{Richard C. Kraaij\thanks{Department of Applied Mathematics, Delft University of Technology, The Netherlands. \emph{E-mail address}: R.C.Kraaij@tudelft.nl} \and Louis Mahé \thanks{CPHT, Ecole Polytechnique, CNRS, Université Paris-Saclay, Route de Saclay, 91128 Palaiseau, France \emph{E-mail address}: louis.mahe@polytechnique.edu }}
\date{\today}
\begin{document}

\maketitle

\begin{abstract}
	We prove Freidlin-Wentzell type large deviation principles for various rescaled models in populations dynamics that have immigration and possibly harvesting: birth-death processes, Galton-Watson trees, epidemic SI models, and prey-predator models. 
	
	The proofs are carried out using a general analytic approach based on the well-posedness of a class of associated Hamilton-Jacobi equations. The notable feature for these Hamilton-Jacobi equations is that the Hamiltonian can be discontinuous at the boundary.
	We prove a well-posedness result for a large class of Hamilton-Jacobi equations corresponding to one-dimensional models, and give partial results for the multi-dimensional setting. 
	
\noindent \emph{Keywords: Large deviations; \and population dynamics; \and Hamilton-Jacobi equations; \and boundary conditions}

\noindent \emph{MSC2010 classification: 49L25; 60F10; 60J80; 92D25} 
\end{abstract}


\section{Introduction}

The evolution of the population size of one or more interacting species can be modeled using Markov jump processes. The jumps correspond to the result of interactions between and amongst individuals of the population: predation, reproduction or correspond to global phenomena such as immigration, emigration, harvesting, or mutation (see e.g. \cite{Al10,MuKu15}). 

A much studied question is the dynamics of the population in the large population scaling limit. Typical results include laws of large numbers to a solution of a first-order differential equation or weak convergence of the process to a solution of a stochastic differential equation as in \cite{BaMe15,MuKu15,CCLS17} and \cite[Chapter 11]{EK86}.

Additional work is required for the estimation of the exit time from a domain: time spent before extinction, time for the propagation of a disease among a population, time for a mutation to establish itself in a population. This motivates the study of path-space large deviations of Freidlin-Wentzell type \cite{FW98}. In short, given a sequence of stochastic processes $X_n(t)$, one aims to establish
\begin{equation*}
\PR[\{X_n(t)\}_{t \geq 0} \approx \{\gamma(t)\}_{t \geq 0}] \approx e^{-n I(\gamma)},
\end{equation*}
where 
\begin{equation*}
(\gamma) = \begin{cases}
I_0(\gamma(0)) + \int_0^\infty \cL(\gamma(s),\dot{\gamma}(s)) \dd s & \text{if } \gamma \in \cA\cC, \\
\infty & \text{otherwise}.
\end{cases} 
\end{equation*}
Here $\cA\cC$ is an appropriate space of absolutely continuous trajectories and where the \textit{Lagrangian} $\cL$ is some non-negative function.

A notable feature of the large deviations for models in population dynamics, as well as in the analysis of queuing systems, is that the systems have boundaries (arising e.g. from the condition that a population size is non-negative) and that the jump rates vanish at the boundary. Large deviations for such systems have been considered by \cite{Ch97,KrPa16,PaSa17} in the context of population dynamics and in \cite{ShWe05} in the context of queuing systems. Indeed, it is the boundary where the rates vanish, that leads to issues in adapting the methods of \cite{FW98}. 

To obtain results, systems are assumed to start in the interior, and the large deviation principle is proven by approximating trajectories that hit the boundary by trajectories that remain in the interior. In addition, the rate function evaluated in the approximating trajectories need to approximate the rate function evaluated in the limiting trajectory. This approach has two major drawbacks:
\begin{enumerate}[(a)]
	\item the pre-limit processes are not allowed to start on the boundary,
	\item the approach does not allow for discontinuous rates at the boundary.
\end{enumerate}
These problems were also identified by \cite{DuIiSo90} in the context of queuing systems in which the rates are discontinuous for natural reasons: if there are customers in the queue, a server is active whereas it is inactive if the queue is empty. This leads to the analysis of a system that has homogeneous rates in the interior of the space, but with discontinuity at the boundary. To treat the large deviations of such systems the authors instead turn to characterizing the large deviations via solutions to a Hamilton-Jacobi equation and prove well-posedness for the Hamilton-Jacobi equation. See also \cite{DuEl92} for a model with discontinuities in the interior instead of at the boundary.

In this paper, we study the large deviations for population dynamics, including the possibility of starting at the boundary as well as discontinuous behaviour at the boundary. This leads to a set of models in which we have inhomogeneous rates in the interior that might vanish at the boundary or have discontinuities at the boundary. 

Discontinuities appear naturally e.g. in the context of population dynamics in the presence of harvesting or predation. A (short-sighted) external entity takes out individuals from the population of the system at a fixed rate, e.g. predators hunting prey possibly until extinction, or the influence of the fishing industry on the population of fish in a lake or sea. Clearly, no population can be taken out of the system when there is no population left.

Our aim is to prove the large deviation principle {for jump processes} that can treat discontinuities as well as vanishing rates. Our main result complements the established results mentioned above by covering the middle ground. As an added benefit, our approach partially extends \cite{KrPa16,PaSa17} by allowing for non-Lipschitz rates. In particular, this means we can consider models that do not have a deterministic limit (See e.g. \cite[Example 2.5.3]{Kr16b}.) We do mention that we cannot allow for an as general state-space as in the recent paper \cite{KrPa16}. Neither have we pushed our results for multi-dimensional models into the domain of \cite{DuIiSo90} that were treated with non-quadratic test functions. We further discuss the possibilities and restrictions of our approach in Section \ref{section:extendability}.
	
Even though this is not the focus of our paper, the results can be used to study the exit time from a domain. In the context of biological processes starting at the boundary, this allows e.g. to study the time that a new species needs to establish itself in a hostile new environment.

\smallskip

We follow \cite{DuIiSo90} in studying large deviations via the theory of viscosity solutions to Hamilton-Jacobi equations. However, we rather follow the more recent framework introduced by \cite{FK06}, who essentially reduce proving the large deviation principle to a well-posedness question to a class of Hamilton-Jacobi equations. This method has been used recently in various papers, see e.g. \cite{DFL11, CoKr17,KrReVe19} and references therein, in which various steps in this method have been explained in detail.

The main technical step in this paper is, therefore, the establishment of well-posedness for the associated Hamilton-Jacobi equations. To be able to treat discontinuities at the boundary in our specific context, we also introduce a framework to account for which types of behaviour are exhibited by the process close to the boundary. Indeed, our Lagrangian needs to take into account the various types of possible behaviour as in \cite{DuIiSo90}. Finally, we argue that there are settings in which the large deviation principle as well as the well-posedness of the Hamilton-Jacobi equation fails to hold. 

\smallskip

The rest of the paper is organized as follows. We start in Section \ref{section:preliminaries} with general preliminaries. In addition, we introduce a framework to describe the exact structure of our space and its boundaries. We then introduce collections of Hamiltonians that take into account the various types of behaviour exhibited by the processes close to the boundary.
Next, in Section \ref{section:LDP_for_examples} we proceed by applying our main abstract results to a collection of examples to illustrate the range of applications. We treat various one-dimesional models including birth-death processes, processes with births only like the process that models the size of a Galton-Watson tree and of some simple epidemic models. We then discuss the failure of well-posedness for a class of birth-death processes. We conclude the section with a treatment of a class of multi-dimensional processes in which there is immigration and harvesting for each species.

We proceed with the comparison principle for Hamilton-Jacobi equations on spaces with boundaries in Section \ref{section:framework_HJ_boundary} and to apply these results to the theory of large deviations in Section \ref{section:abstract_LDP}. We conclude our paper with Section \ref{section:proofs_examples} in which we apply the results of Sections \ref{section:framework_HJ_boundary} and \ref{section:abstract_LDP} to the examples of Section \ref{section:LDP_for_examples}.

\bigskip

\section{Preliminaries} \label{section:preliminaries}

Before starting with our main results, we state some preliminaries  needed for the formulation of our main results. In Section \ref{section:general_preliminaries}, we give some general mathematical preliminaries including the main definitions of large deviation theory. We proceed in Section \ref{section:boundary_structure} by introducing the class of spaces on which our results will be applicable, in addition we introduce a suitable framework to effectively study the behavior of our Hamiltonians on the boundaries of our spaces. Finally, in Section \ref{section:generating_sets_of_Hamiltonians} we introduce a class of Hamiltonians and Lagrangians which are suited to for our spaces with boundary.

\subsection{General preliminaries} \label{section:general_preliminaries}

Let $E$ be a $d$-dimensional convex polyhedron with non-empty interior. A full definition follows on page \pageref{definition:base_space} below. We denote by $\cP(E)$ the space of Borel probability measures on $E$ and we denote by $D_E(\bR^+)$ the space of paths $\gamma : \bR^+ \rightarrow E$ that are right continuous and have left limits. We endow $D_E(\bR^+)$ with the Skorokhod topology, cf. \cite[Section 3.5]{EK86}. An important property is that under this topology $D_E(\bR^+)$ is Polish if $E$ is Polish.

We denote by $C_b(E), C_l(E)$ and $C_u(E)$ the spaces of continuous functions that are bounded, bounded from below and bounded from above respectively. We write $C_c(E)$ for the functions with compact support in $E$ relative to the topology of $E$. Finally, we denote $C_c^k(E)$ for functions in $C_c(E)$ that are $k$ times continuously differentiable.

Let $\overline{\bR} = [-\infty,\infty]$ be the two-point compactification of $\bR$. We denote by $\cM(E,\overline{\bR})$ the space of measurable functions from $E$ into $\overline{\bR}$. The spaces $\cM_b(E,\overline{\bR})$, $\cM_l(E,\overline{\bR})$ and $\cM_u(E,\overline{\bR})$ are the subspaces of $\cM(E,\overline{\bR})$ with functions that are bounded, bounded from above and below from respectively.

We say that a set $F \subseteq C_b(E)$ \textit{separates points} in $E$ if for all $x,y \in E$, $x\neq y$ there is $f \in F$ with $f(x) \neq f(y)$.

For a function $f : E \rightarrow \bR$, we denote by $f^*$ and $f_*$ the upper semi-continuous and lower semi-continuous regularization respectively. 

For a set $G \subseteq E$, we write $ch G$ for the closed convex hull of $G$ in $E$.

\begin{definition}
	Let $\{X_n\}_{n \geq 1}$ be a sequence of random variables on a Polish space $\mathcal{X}$. Furthermore, consider a function $I : \mathcal{X} \rightarrow [0,\infty]$ and a sequence $\{r_n\}_{n \geq 1}$ of positive numbers such that $r_n \rightarrow \infty$. We say that
	\begin{itemize}
		\item $I$ is a \textit{good rate-function} if the set $\{x \, | \, I(x) \leq c\}$ is compact for every $c \geq 0$.
		\item 
		the sequence $\{X_n\}_{n\geq 1}$ is \textit{exponentially tight} at speed $r_n$ if, for every $a \geq 0$, there exists a compact set $K_a \subseteq \mathcal{X}$ such that $\limsup_n r_n^{-1} \log \, \PR[X_n \notin K_a] \leq - a$.
		\item 
		the sequence $\{X_n\}_{n\geq 1}$ satisfies the \textit{large deviation principle} with speed $r_n$ and good rate-function $I$, denoted by 
		\begin{equation*}
		\PR[X_n \approx a] \asymp e^{-r_n I(a)},
		\end{equation*}
		if, for every closed set $A \subseteq \mathcal{X}$, we have 
		\begin{equation*}
		\limsup_{n \rightarrow \infty} \, r_n^{-1} \log \PR[X_n \in A] \leq - \inf_{x \in A} I(x),
		\end{equation*}
		and, for every open set $U \subseteq \mathcal{X}$, 
		\begin{equation*}
		\liminf_{n \rightarrow \infty} \, r_n^{-1} \log \PR[X_n \in U] \geq - \inf_{x \in U} I(x).
		\end{equation*}
	\end{itemize}
\end{definition}

We denote $\cA\cC(E)$ for the set of absolutely continuous curves in $E$. For the sake of completeness, we recall the definition of absolute continuity.

\begin{definition} 
	A curve $\gamma: [0,T] \to E$ is absolutely continuous in $E$ if there exists a function $g \in L^1([0,T],\bR^d)$ such that for $t \in [0,T]$ we have $\gamma(t) = \gamma(0) + \int_0^t g(s) \dd s$. We write $g = \dot{\gamma}$. A curve $\gamma: \bR^+ \to E$ is absolutely continuous in $E$ if the restriction to $[0,T]$ is absolutely continuous for every $T \geq 0$. 	
\end{definition}

Finally, we write $\bN = \{0,1, \dots\}$ for the set of natural numbers including $0$.

\subsection{Boundary structure of our state-spaces} \label{section:boundary_structure}

For $x,v \in \bR^d$, we write
\begin{equation*}
\cB_{x,v} := \left\{x + w \, \middle| \, w\in \bR^d, \ip{w}{v} \geq 0  \right\}, \quad \cB_{x,v}^\circ := \left\{x + w \, \middle| \, w\in \bR^d, \ip{w}{v} > 0  \right\},
\end{equation*}
for the closed and open half-spaces based at $x$ in the direction $v$.

\begin{definition}[$d$ dimensional convex Polyhedron]\label{definition:base_space}
	We will say that $E$ is a \textit{$d$ dimensional convex Polyhedron} if there are $x_1,\dots,x_k, y_1,\dots,y_l \in \bR^d$ $v_1,\dots,v_k, w_1,\dots,w_l \in \bR^d$, $v_i \neq v_j, v_i \neq w_j, w_i \neq w_j$  such that
	\begin{equation} \label{eqn:definition_intersection_E}
	E := \left(\bigcap_{i =1}^k \cB_{x_i,v_i} \right) \cap \left(\bigcap_{j =1}^l \cB_{y_j,w_j}^\circ\right). 
	\end{equation}
	Without loss of generality, we assume that the interior of $E$ is non-empty. (Otherwise we perform a coordinate transform and work in a lower-dimensional space). 
\end{definition}

\begin{definition}[Tangent spaces to a $d$ dimensional convex Polyhedron] \label{definition:tangent_cones}
	Let $E$ be a $d$ dimensional convex Polyhedron. 
	\begin{enumerate}[(a)]
		\item Let $\cJ = \cJ(E)$ be the set of subsets $J \subseteq \{1,\dots,k\}$ such that
		\begin{equation*}
		E_J := E \cap \bigcap_{i \in J} \left(\cB_{x_i,v_i} \setminus \cB_{x_i,v_i}^\circ\right) \neq \emptyset.
		\end{equation*}
		$E_J$ is a $|J|$ dimensional face of $E$.
		\item For $J \in \cJ$ such that $J \neq \emptyset$ write $\Gamma_J := \bigcap_{i \in J} \cB_{0,v_i}$, $\Gamma_\emptyset := \bR^d$. 
		\item For each $x \in E$ we define the set $T_x E := \bigcap_{J \in \cJ: \, x \in E_J} \Gamma_J$. 
		\item We denote by $J^*(x)$ the set of largest cardinality $J$ such that $x \in E_J$. Note that $T_x E = \Gamma_{J^*(x)}$.
		\item We write $TE = \left\{(x,v) \in E \times \bR^d \, \middle| \, v \in T_x E \right\}$. We equip $TE$ with the subspace topology inherited from $E \times \bR^d$.
	\end{enumerate}
\end{definition}

For each point $x \in E$, the space $T_x E$ is the set of `tangent vectors' at $x$: if $v \in T_x E$ then we have for small $\lambda > 0$ that $x + \lambda v \in E$.

We give four examples to clarify the definition.

\begin{example}[Half line] \label{example:half_line}
	Let $E = [0,\infty)$. Then for $(x_1,v_1) = (0,1)$, we have $E = B_{0,1}$. It follows that $\cJ = \{\emptyset, \{1\}\}$, $E_\emptyset = E$, $\Gamma_\emptyset = \bR$, $E_{\{1\}} = \{0\}$, $E_{\{1\}} = \{0\}$ and $\Gamma_{\{1\}} = [0,\infty)$. The tangent spaces are given by $T_0 E = \Gamma_{\{1\}} = [0,\infty)$ and $T_x E = \Gamma_\emptyset = \bR$.
\end{example}

\begin{example}[Half open half closed interval]
	Consider $E = [0,1)$. Then for $(x_1,v_1) =(0,1), (y_1,w_1) = (1,-1)$, we have $E = \cB_{0,1} \cap \cB_{1,-1}^\circ$. The set $\cJ$ equals $\cJ = \{\emptyset, \{1\}\}$ and $E_\emptyset = E$, $\Gamma_\emptyset = \bR$ and $E_{\{1\}} = \{0\}$. The tangent spaces are given by $T_{0} E = \Gamma_{\{1\}} = [0,\infty)$ and $T_xE = \bR$ for $x \in (0,1)$.
\end{example}

\begin{example}[Quarter space]
	Consider $E = [0,\infty)^2$. Then for $(x_1,v_1) = ((0,0),(1,0)), (x_2,v_2) = ((0,0),(0,1))$, we have $E = \cB_{x_1,v_1} \cap \cB_{x_2,v_2}$. The set $\cJ$ equals $\cJ = \{\emptyset, \{1\}, \{2\},\{1,2\}\}$. Furthermore:
	\begin{align*}
	E_\emptyset & = E, & \Gamma_{\emptyset}&  = \bR^2, \\
	E_{\{1\}} & = \left\{(x,y) \, \middle| \, x = 0, y \geq 0 \right\}, & \Gamma_{\{1\}} &  = \left\{(v_1,v_2) \, \middle| \, v_1 \geq 0\right\}, \\
	E_{\{2\}} & = \left\{(x,y) \, \middle| \, x \geq 0, y = 0 \right\}, & \Gamma_{\{2\}}&  = \left\{(v_1,v_2) \, \middle| \, v_2 \geq 0\right\}, \\
	E_{\{1,2\}} & = \left\{(x,y) \, \middle| \, x = 0, y = 0 \right\}, & \Gamma_{\{1,2\}}&  = \left\{(v_1,v_2) \, \middle| \, v_1,v_2 \geq 0\right\}. 
	\end{align*}
	Thus, the tangent spaces are given by 
		\begin{align*}
		T_{(0,0)} E & = J_{\{1,2\}} = [0,\infty)^2, \\
		T_{(x,0)} E & = J_{\{1\}} = [0,\infty) \times \bR & \text{if } x > 0, \\
		T_{(0,y)} E & = J_{\{2\}} = \bR \times [0,\infty) & \text{if } y > 0, \\		
		T_{(x,y)} E & = J_\emptyset = \bR^2 & \text{if } x,y > 0.
		\end{align*}
\end{example}

\begin{example}[Quarter space without corner]
	Consider $E = [0,\infty)^2 \setminus \{(0,0)\}$. Then for $(x_1,v_1) = ((0,0),(1,0)), (x_2,v_2) = ((0,0),(0,1))$, $(y_1,w_1) = ((0,0),(1,1))$, we have $E = \cB_{x_1,v_1} \cap \cB_{x_2,v_2} \cap \cB_{y_1,w_1}^\circ$. The set $\cJ$ equals $\cJ = \{\emptyset, \{1\}, \{2\}\}$. Furthermore:
	\begin{align*}
	E_\emptyset & = E, & \Gamma_{\emptyset}&  = \bR^2, \\
	E_{\{1\}} & = \left\{(x,y) \, \middle| \, x = 0, y < 0 \right\}, & \Gamma_{\{1\}} &  = \left\{(v_1,v_2) \, \middle| \, v_1 \geq 0\right\}, \\
	E_{\{2\}} & = \left\{(x,y) \, \middle| \, x > 0, y = 0 \right\}, & \Gamma_{\{2\}}&  = \left\{(v_1,v_2) \, \middle| \, v_2 \geq 0\right\}. 
	\end{align*}
	In this case, the tangent spaces are the same as in previous example, except for that the point $(0,0)$ is not included in the space $E$.
\end{example}

\subsection{A generating collection of Hamiltonians and an associated Lagrangian} \label{section:generating_sets_of_Hamiltonians}

Our goal is to prove a large deviation principle for the dynamics of stochastic systems, and write the rate function in Lagrangian form. The Lagrangian will as usual be given as the Legendre transform of a Hamiltonian. Due to the discontinuous nature of the dynamics at the boundary, this Hamiltonian will be constructed as the maximum over Hamiltonians that take into account all possible behaviours of the dynamics close to the boundary. We end this section with a simple example illustrating the construction.

\begin{definition} \label{definition:generating_Hamiltonians}
	Let $E$ be a $d$ dimensional convex Polyhedron. Suppose for each $J \in \cJ$, there is a map $\cH_J : E_J \times \bR^d \rightarrow \bR$  such that 
	\begin{enumerate}[(a)]
		\item for each $x \in E_J$ the map $p \mapsto \cH_J(x,p)$ is convex;
		\item the map $\cH_J$ is continuously differentiable in $(x,p)$; 
		\item the gradient $\partial_p \cH_J(x,p) \in \Gamma_J$;
	\end{enumerate}
	We say that $\bH := \{\cH_J\}_{J \in \cJ(E)}$ is a \textit{generating set of Hamiltonians}. For a generating set of Hamiltonians, denote
	\begin{equation*}
	\cH_\dagger(x,p)  := \max_{J \in \cJ: \, x \in E_J} \cH_J(x,p), \qquad \cH_\ddagger(x,p)  := \min_{J \in \cJ: \, x \in E_J} \cH_J(x,p).
	\end{equation*}
	We additionally write $H_\dagger \subseteq C_c^2(E) \times C_b(E)$ and $H_\ddagger \subseteq C_c^2(E) \times C_b(E)$ for the operators defined by $H_\dagger f(x) := \cH_\dagger(x,\nabla f(x))$ and $H_\ddagger f(x) := \cH_\ddagger(x,\nabla f(x))$.
\end{definition}

\begin{definition} \label{definition:construction_of_L}
	Let $\bH = \{\cH_J\}_{J \in \cJ(E)}$ be a generating set of Hamiltonians. 	
	\begin{enumerate}[(a)]
		\item For each $J \in \cJ(E)$, define $\cL_J : \bR^d \times \bR^d \rightarrow [0,\infty]$ and $\cL : E \times \bR^d \rightarrow [0,\infty]$ by 
		\begin{equation*}
		\cL_J(x,v) = \sup_{p \in \bR^d} \left\{pv - \cH_J(x,p) \right\}, \qquad \cL(x,v) = \sup_{p \in \bR^d} \left\{pv - \cH_\dagger(x,p) \right\}.
		\end{equation*}
		\item Denote by $\widehat{\cL} : E \times \bR^d \rightarrow [0,\infty]$ the map 
		\begin{equation*}
		\widehat{\cL}(x,v) := \inf \left\{ \sum_{J : x \in E_J} \lambda_J \cL_J(x,v_J) \, \middle| \, \sum_{J : x \in E_J} \lambda_J v_J = v, \, \sum_{J : x \in E_J} \lambda_J= 1, \lambda_J\geq 0 \right\}.
		\end{equation*}		
	\end{enumerate}
\end{definition}

We start with a collection of preliminary properties of $\cL, \widehat{\cL}$ and their Legendre duals. 

\begin{lemma} \label{lemma:properties_of_L}
	Let $\{\cH_J\}_{J \in \cJ(E)}$ be a generating set of Hamiltonians.
	\begin{enumerate}[(a)]
		\item For each $x \in E$ the map $v \mapsto \widehat{\cL}(x,v)$ is the convex hull of the Lagrangians $v \mapsto \cL_J(x,v)$, i.e. the largest convex function that lies below all $\cL_J$'s;
		\item For each $x \in E$, the map $v \mapsto \cL(x,v)$ is the lower semi-continuous regularization of $v\mapsto \widehat{\cL}(x,v)$.
	\end{enumerate}
\end{lemma}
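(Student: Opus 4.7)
The plan is to prove (a) by recognizing the formula defining $\widehat{\cL}(x,\cdot)$ as a standard representation of the convex envelope of the finite family $\{\cL_J(x,\cdot)\}_{J : x \in E_J}$, and to deduce (b) via Legendre duality applied to $\cH_\dagger(x,\cdot) = \max_{J: x \in E_J} \cH_J(x,\cdot)$.

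For (a), fix $x \in E$. Convexity of $v \mapsto \widehat{\cL}(x,v)$ follows by combining admissible representations: given decompositions witnessing $\widehat{\cL}(x,v^1)$ and $\widehat{\cL}(x,v^2)$ up to $\varepsilon$, the $\alpha$-convex combination of the weights produces an admissible representation of $\alpha v^1 + (1-\alpha) v^2$, yielding convexity upon sending $\varepsilon \to 0$. The inequality $\widehat{\cL}(x,v) \leq \cL_J(x,v)$ for each admissible $J$ comes from the trivial representation $\lambda_J = 1$, $v_J = v$. For maximality, if $g$ is convex with $g \leq \cL_J$ for all $J$ with $x \in E_J$, then Jensen's inequality applied to any admissible representation $v = \sum_J \lambda_J v_J$ gives $g(v) \leq \sum_J \lambda_J g(v_J) \leq \sum_J \lambda_J \cL_J(x,v_J)$, and taking the infimum yields $g(v) \leq \widehat{\cL}(x,v)$.

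For (b), note first that $\cH_\dagger(x,\cdot)$ is a finite maximum of convex continuous functions, hence itself convex and continuous. The claim then reduces to the general Legendre duality identity: if $\phi_1,\dots,\phi_n$ are proper convex lower semi-continuous functions on $\bR^d$, then $(\max_i \phi_i)^*$ equals the lower semi-continuous convex envelope of $\min_i \phi_i^*$. The proof proceeds by biconjugation: from $\phi := \max_i \phi_i \geq \phi_i$ one obtains $\phi^* \leq \phi_i^*$ and hence $\phi^* \leq \overline{\mathrm{conv}}(\min_i \phi_i^*)$; conversely, any convex lsc $g$ satisfying $g \leq \phi_i^*$ for all $i$ obeys $g^* \geq \phi_i^{**} = \phi_i$, so $g^* \geq \phi$ and thus $g = g^{**} \leq \phi^*$. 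Combining this identity with part (a), which realizes $\widehat{\cL}(x,\cdot)$ as the plain convex envelope of $\{\cL_J(x,\cdot)\}_{J : x \in E_J}$ (equivalently of its pointwise minimum), the lower semi-continuous regularization of $\widehat{\cL}(x,\cdot)$ coincides with $\cL(x,\cdot)$.

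The principal obstacle is the Legendre duality identity for a finite maximum; the rest is bookkeeping about which indices $J$ are active at a given $x \in E$. The required properness and lower semi-continuity of each $\cL_J(x,\cdot)$ come essentially for free from the standing hypotheses: since $\cH_J(x,\cdot)$ is finite-valued, convex, and continuously differentiable, its Legendre transform is convex, lower semi-continuous, and proper.
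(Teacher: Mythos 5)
Your proof is correct and takes essentially the same route as the paper, whose entire proof consists of citing \cite[Theorem 5.6]{Ro70} for (a) and \cite[Theorem 16.5]{Ro70} for (b): your part (a) is a direct verification of Rockafellar's representation formula for the convex hull of finitely many proper convex functions, and your part (b) is a biconjugation proof of the conjugate-of-a-maximum identity from that same Theorem 16.5. In effect you have supplied self-contained proofs of exactly the two results the paper invokes, with the properness and lower semi-continuity hypotheses correctly accounted for.
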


\begin{proof}
	(a) follows from \cite[Theorem 5.6]{Ro70}. (b) follows from \cite[Theorem 16.5]{Ro70}.
\end{proof}

The lemma shows that our Lagrangian turns out to be similar to the one in \cite{DuIiSo90} in the sense that it it can be obtained as the lower semi-continuous hull of the Lagrangians that are obtained for the different types of boundary behaviour. Note that we do not re-define this convex hull to equal $\infty$ when the speed points outward. This issue will be taken care of by only considering trajectories that stay inside the state-space, i.e. in $\cA\cC(E)$.

\smallskip

We end this section by working out the Hamiltonians and Lagrangians for a simple example arising from the large deviations of a rescaled a birth-death process on $\bN$ with homogeneous jump rates $1$ for $k \mapsto k+1$ and $1$ for $k \mapsto k-1$ (if $k > 0$).

\begin{example}
	Let $E = [0,\infty)$, so that $E = B_{0,1}$. See Example \ref{example:half_line} for further notation. As $\cJ = \{\emptyset, \{1\}\}$ our generating set of Hamiltonians has two elements:
	\begin{align*}
	\cH_\emptyset(x,p) & =  \left[e^p -1\right] + \left[e^{-p} - 1\right], \\
	\cH_{\{1\}}(x,p) & =  \left[e^p -1\right].
	\end{align*}
	$\cH_\emptyset$ captures the behaviour of the process in the interior and consists of two terms corresponding to jumps up and down. $\cH_{\{1\}}$ captures the behaviour at the boundary point $\{0\}$ and therefore only consists of jumps going up. Note that indeed $\partial_p \cH_{\{1\}}(0,p) \subseteq [0,\infty)$.
	
	\smallskip
	
	We obtain $\cH_\dagger(x,p) = \cH_\ddagger(x,p) = \cH_\emptyset(x,p)$ for $x > 0$ and
	\begin{align*}
	\cH_\dagger(0,p) & = \begin{cases}
	\left[e^p -1\right] + \left[e^{-p} - 1\right] & \text{if } p \leq 0, \\
	\left[e^p -1\right] & \text{if } p \geq 0,
	\end{cases} \\
	\cH_\ddagger(0,p) & = \begin{cases}
	\left[e^p -1\right] & \text{if } p \leq 0, \\
	\left[e^p -1\right] +  \left[e^{-p} - 1\right] & \text{if } p \geq 0.
	\end{cases} 
	\end{align*}
	Performing Legendre transformation, we obtain
	\begin{align*}
	\cL_\emptyset(x,v) & = v \log \left(\frac{v + \sqrt{v^2 + 4}}{2} \right) - \sqrt{v^2 + 4} + 2, \\
	\cL_{\{1\}}(0,v) & = v \log v - v + 1.
	\end{align*}
	We thus find $\cL(x,v) = \cL_\emptyset(x,v)$ for $x > 0$ and
	\begin{equation*}
	\cL(0,v) = \begin{cases}
	\cL_\emptyset(0,v) & \text{if } v \leq 0, \\
	0 & \text{if } v \in [0,1], \\
	\cL_{\{1\}}(0,v) & \text{if } v \leq 1.
	\end{cases}
	\end{equation*}
	Clearly, the part $\cL(0,v)$ for $v < 0$ will not play a role in the final large deviation principle as we will exclude trajectories that leave $[0,\infty)$.
\end{example}

\section{Large deviations for a collection of examples} \label{section:LDP_for_examples}

Before diving into the general results in Sections \ref{section:framework_HJ_boundary} and \ref{section:abstract_LDP}, we give a variety of contexts in which these general results can be applied. The examples are not meant to give an exhaustive list, but rather illustrate that the general results are applicable in a variety of contexts. 

We begin with a collection of examples for the one-dimensional setting in Section \ref{section:one_d_examples} and proceed with large deviations for interacting species in Section \ref{section:multi_d_examples}.

\subsection{One dimensional examples} \label{section:one_d_examples}

We start with three examples in which there do not occur discontinuities in the Hamiltonian at the boundary,  i.e. $H_\dagger = H_\ddagger$:
\begin{enumerate}[(1)]
	\item Large deviations for birth and death processes with immigration;
	\item Large deviations for birth processes with jumps of arbitrary size.
	\item Large deviations for simple epidemic models.
\end{enumerate}
We state these results to illustrate the extent of applicability of our method. We will discuss in each of these three cases why the result complements the literature \cite{FK06,KrPa16,PaSa17}.

We  proceed with birth and death processes with immigration and harvesting. Harvesting introduces a discontinuity at the boundary: harvesting is not possible if there are no individuals.

As a final example, we discuss birth and death processes without any form of immigration or harvesting at the boundary.  In this setting our results do not apply, and we argue why indeed a result of the type given in all other examples is not expected to hold.

Finally, we mention that the one-dimensional results for the dynamic Curie-Weiss model in \cite{Kr16b} fall within the framework introduced in this paper and that the results can also be applied to the Wright-Fisher and Moran models with positive mutation rate or for one-dimensional models in the analysis of queuing sytems.

\subsubsection{Birth and death processes with immigration} \label{section:BD_immigration}

For $n\in \bN$, let $X_n$ be a Markov process on $\bN$ with jump rate $\lambda_n(q)$ corresponding to the birth of an individual, $\mu_n(q)$ for the death of one individual and $\rho_n(q)$ for the arrival of one individual through immigration if the population has size $q \in \bN$. The definition of $\lambda_n$ and $\mu_n$ implies $\lambda_n(0)=\mu_n(0)=0$. 

Thus, the process has generator :
\begin{equation*}
\cA_nf(q)=(\lambda_n(q)+\rho_n(q))\left(f(q+1)-f(q)\right)+\mu_n(q)\left(f(q-1)-f(q)\right)
\end{equation*}
To make sure that the martingale problem, see Corollary 8.3.2 in \cite{EK86}, is well-posed, we impose for each $n$ that 
\begin{equation*}
\sup_{q \in \bN} \frac{\lambda_n(q) + \rho_n(q) + \mu_n(q)}{1+q} < \infty. 
\end{equation*}

\begin{theorem} \label{theorem:birth_death_immigration}
	Let $E = [0,\infty)$. Suppose that there are positive and continuous function $\lambda, \mu$, and $\rho$, such that for every compact set $K \subseteq E$:
	\begin{equation}\label{scale_condition_b&d}
	\lim_{n\rightarrow \infty} \sup_{x\in K, xn \in \bN} \left|\frac{1}{n}\lambda_n(nx)-\lambda(x)\right| +  \left|\frac{1}{n}\mu_n(nx)-\mu(x)\right| + \left|\frac{1}{n}\rho_n(nx)-\rho(x)\right| = 0.
	\end{equation}
	Moreover suppose that :
	
	\begin{enumerate}[(a)]
		\item $\lambda(0) = \mu(0) = 0$,  $\rho(0)>0$ and $\lambda$ and $\mu$ are strictly positive on $(0,\infty)$;
		\item the birth rate $\lambda$ and the immigration rate $\rho$ satisfy $\lambda(x)+\rho(x)=\cO(x\log(x))$ at $\infty$;
		\item $\exists$ $\delta>0$ such that $\mu$ is increasing on $[0,\delta]$.
	\end{enumerate}
	
	Let $X_n$ be solutions to the martingale problem or $\cA_n$. Suppose that $\tfrac{1}{n}X_n(0)$ satisfies a large deviation principle on $\bR^+$ with speed $n$ and with good rate function $I_0$. Then the process $t \mapsto \tfrac{1}{n}X_n(t)$ satisfies a large deviation principle on $D_{\bR^+}(\bR^+)$ with speed $n$ and with good rate function $I$:
	\begin{equation*}
	I(\gamma)=
	\begin{cases}
	I_0(\gamma(0))+\int_0^\infty \cL(\gamma(s),\dot\gamma(s))ds &b\text{if }\gamma \in \cA \cC(\bR^+), \\
	\infty&\text{otherwise}.
	\end{cases}
	\end{equation*}
	The Lagrangian $\cL$ is the Legendre transform of 
	\begin{equation}\label{equation:hamiltonian_birth_death}
	\cH(x,p)=\left(\lambda(x)+\rho(x)\right)\left[e^p-1\right]+\mu(x)\left[e^{-p}-1\right].
	\end{equation}		
	
\end{theorem}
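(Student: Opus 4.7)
I would follow the Feng--Kurtz programme, reducing the large deviation principle to the well-posedness of an associated Hamilton--Jacobi equation, and checking that the present one-dimensional setting fits the abstract machinery of Sections \ref{section:framework_HJ_boundary} and \ref{section:abstract_LDP}.

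\textbf{Step 1: Recasting the model in the polyhedron framework.} Here $E = \bR^+ = [0,\infty)$, which fits Definition \ref{definition:base_space} with $(x_1,v_1) = (0,1)$, so $\cJ = \{\emptyset, \{1\}\}$, $E_\emptyset = (0,\infty)$, $E_{\{1\}} = \{0\}$, $\Gamma_\emptyset = \bR$, $\Gamma_{\{1\}} = [0,\infty)$. I would set
\begin{equation*}
\cH_\emptyset(x,p) = (\lambda(x)+\rho(x))(e^p-1) + \mu(x)(e^{-p}-1), \qquad \cH_{\{1\}}(0,p) = \rho(0)(e^p-1).
\end{equation*}
Assumption (a) gives $\cH_\emptyset(0,p) = \cH_{\{1\}}(0,p)$, so the Hamiltonian is continuous and $H_\dagger = H_\ddagger$. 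Convexity in $p$, $C^1$ regularity, and $\partial_p \cH_{\{1\}}(0,p) = \rho(0)e^p > 0 \in \Gamma_{\{1\}}$ verify Definition \ref{definition:generating_Hamiltonians}.

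\textbf{Step 2: Convergence of the non-linear generators.} For $f \in C_c^2(\bR^+)$ set $H_n f(x) := n^{-1} e^{-nf(x)} \cA_n e^{nf}(x)$; a direct computation gives
\begin{equation*}
H_n f(x) = \tfrac{1}{n}\bigl(\lambda_n(nx)+\rho_n(nx)\bigr)\bigl(e^{n[f(x+\frac{1}{n})-f(x)]}-1\bigr) + \tfrac{1}{n}\mu_n(nx)\bigl(e^{n[f(x-\frac{1}{n})-f(x)]}-1\bigr).
\end{equation*}
Using the uniform convergence hypothesis \eqref{scale_condition_b&d} and a Taylor expansion $n[f(x \pm 1/n) - f(x)] \to \pm f'(x)$ uniformly on compacts, I obtain $H_n f \to Hf$ uniformly on compacts with $Hf(x) = \cH(x,f'(x))$, giving the required $\LIM$ convergence of operators.

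\textbf{Step 3: Exponential tightness.} I would build a Lyapunov containment function of the form $\Upsilon(x) = (1+x)\log(1+x) - x$ (or a smooth, compactly truncated version), and check that $H_n \Upsilon$ is bounded above uniformly in $n$. The growth hypothesis (b), $\lambda(x)+\rho(x) = \cO(x\log x)$, is precisely what is needed: the contribution from births expands to $\frac{1}{n}(\lambda_n(nx)+\rho_n(nx))(e^{\Upsilon'(x)}-1) \approx (\lambda(x)+\rho(x)) \cdot \frac{1}{1+x} = \cO(\log x)$, absorbed by the coercivity of $\Upsilon$, while the death term is nonpositive. Standard exponential martingale estimates then yield exponential tightness at speed $n$ on $D_{\bR^+}(\bR^+)$.

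\textbf{Step 4: Well-posedness of the Hamilton--Jacobi equation --- the main obstacle.} This is the heart of the matter: for every $\lambda > 0$ and $h \in C_b(\bR^+)$, one must prove the comparison principle for $f - \lambda Hf = h$, with the subsolution defined via $H_\dagger$ and the supersolution via $H_\ddagger$. In the interior this is a standard convex-Hamiltonian comparison, and at $x=0$ continuity of $\cH$ across the boundary removes the usual obstruction. Condition (c) on the monotonicity of $\mu$ near $0$ should be used to rule out boundary traps for the doubling-of-variables penalization: it ensures that when a subsolution/supersolution pair has a maximum forced towards $0$, the gradient comparison behaves well because $\mu$ decreases as one moves towards the boundary. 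I would deduce comparison from the general one-dimensional result of Section \ref{section:framework_HJ_boundary}, whose hypotheses (continuity at the boundary, $\partial_p \cH \in \Gamma_J$, a mild monotonicity of the death rate near $0$) have been tailored to exactly this type of example.

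\textbf{Step 5: Assembling the LDP.} With Steps 2--4 in hand, the abstract theorem of Section \ref{section:abstract_LDP} (the Feng--Kurtz programme adapted to the polyhedron setting) yields the large deviation principle at speed $n$ for $\tfrac{1}{n}X_n$ in $D_{\bR^+}(\bR^+)$, with rate function of the claimed form and Lagrangian the Legendre transform of $\cH_\dagger = \cH$. Since there is no discontinuity at the boundary ($H_\dagger = H_\ddagger$), by Lemma \ref{lemma:properties_of_L} $\widehat{\cL} = \cL$ agrees with the classical Legendre transform of \eqref{equation:hamiltonian_birth_death}, completing the proof.
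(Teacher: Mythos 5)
Your overall strategy is exactly the paper's: reduce to Theorem \ref{theorem:abstract_LDP}, verify convergence of the nonlinear generators $H_n$ by Taylor expansion, produce a good containment function, and verify the comparison principle via the one-dimensional Theorem \ref{theorem:comparison_principle_1d}, using $\rho(0)>0$ for momenta pointing into the domain and the monotonicity of $\mu$ near $0$ (hypothesis (c)) for momenta pointing towards the boundary. Steps 1, 2, 4 and 5 match the paper's proof in substance.

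The gap is in Step 3, which is where hypothesis (b) must be used and where your function fails. With $\Upsilon(x)=(1+x)\log(1+x)-x$ you have $\Upsilon'(x)=\log(1+x)$, hence $e^{\Upsilon'(x)}-1=x$, not $\approx \tfrac{1}{1+x}$ as written; the birth/immigration contribution to $\cH(x,\Upsilon'(x))$ is then $(\lambda(x)+\rho(x))\,x=\cO(x^2\log x)$, which is unbounded. Even if you intended $\Upsilon(x)=\log(1+x)$, so that $\Upsilon'(x)=\tfrac{1}{1+x}$, the resulting bound $\cO(\log x)$ is still insufficient: Definition \ref{definition:good_containment_function} demands the \emph{uniform} bound $\sup_{z}\cH(z,\Upsilon'(z))<\infty$, which is the input to Propositions \ref{proposition:FK8.9} and \ref{proposition:compact_containment}; there is no Gronwall-type absorption of an unbounded $H\Upsilon$ by ``coercivity of $\Upsilon$'' in this framework. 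The growth hypothesis $\lambda(x)+\rho(x)=\cO(x\log x)$ is calibrated to the choice $\Upsilon(x)=\log\log x$ for $x\geq e^e$ (smoothly extended near the origin), for which $\Upsilon'(x)=\tfrac{1}{x\log x}$ and $e^{\Upsilon'(x)}-1\sim\tfrac{1}{x\log x}$, so the birth term becomes $\tfrac{\lambda(x)+\rho(x)}{x\log x}\,(1+o(1))=\cO(1)$ while the death term is nonpositive in $p>0$-direction; this is the paper's Lemma \ref{lemma:containment_function_1d_immi}. Replacing your $\Upsilon$ by this one repairs the step and the rest of your argument goes through.
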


\begin{remark}
	The result is a variant (without the diffusion part) of Theorem 10.22 of \cite{FK06} for Lévy processes on $\bR^d$ adapted to the context of $[0,\infty)$. The result is a complement to \cite{KrPa16,PaSa17} due to the non-compactness of the state-space and possibly non-Lipschitz rates.
\end{remark}

\subsubsection{Growing populations} \label{section:LDP_for_growing_populations}

As in Section \ref{section:BD_immigration}, we consider an evolving population $X_n$. In this setting however, we assume that the population can only grow and starts with non-zero size. 

Each individual gives $k$ offspring at a rate that depends on the population size: the rate at which individuals give $k$ offspring depends on $n$: $v_{k,n}(q)$. The generator of this process, if it exists, is given by
\begin{equation*}
\cA_n f(q) = \sum_{k=1}^\infty q v_{k,n}(q) \left[f(q+k) - f(q)\right].
\end{equation*}
As the rates of this jump process are potentially unbounded, we impose for each $n$ the conditions
\begin{equation} \label{eqn:increasing_jump_process_well_posedness_conditions}
\sup_q \sum_{k = 1}^\infty q v_{k,n}(q) < \infty, \qquad \sup_q \sum_{k=1}^\infty k v_{k,n}(q) < \infty.
\end{equation}
These two bounds and  Corollary 8.3.2 in \cite{EK86} imply that the martingale problem for $\cA_n$ has a unique solution.

\begin{theorem} \label{theorem:LDP_growing_population}
	Let $E = (0,\infty)$. Suppose \eqref{eqn:increasing_jump_process_well_posedness_conditions} is satisfied. Suppose in addition that there are continuous functions $v_k$ on $E$ such that for each compact set $K \subseteq E$, we have
	\begin{equation} \label{eqn:LDP_growing_population_convergence_condition}
	\lim_{n \rightarrow \infty} \sup_{x \in K, xn \in \bN} \left|v_{k,n}(nx) - v_k(x) \right| = 0.
	\end{equation}
	
	In addition, the functions $v_k$ satisfy
	\begin{enumerate}[(a)]
		\item $\sum_{k =1}^\infty v_k(x) > 0$ for all $x \in (0,\infty)$;
		\item the function $x \mapsto \sum_{k=1}^\infty k v_k(x)$ is bounded;
		\item \label{item:growth_condition_growing_population} there is some $\alpha > 0$ such that
		\begin{equation*}
		\sup_x \sum_{k=1}^\infty x v_k(x) \frac{k^2}{(1+x)^2} e^{\alpha k/(1+x)} < \infty.
		\end{equation*}
	\end{enumerate}
	
	Let $X_n$ be  solutions to the martingale problem for $\cA_n$. Suppose that $\tfrac{1}{n}X_n(0)$ satisfies the large deviation principle on $E$ with speed $n$ and with good rate function $I_0$. Then the processes $t \mapsto \tfrac{1}{n}X_n(t)$ satisfy the large deviation principle on $D_E(\bR^+)$ with good rate function
	\begin{equation*}
	I(\gamma) = \begin{cases}
	I_0(\gamma(0)) + \int_0^\infty \cL(\gamma(s),\dot{\gamma}(s)) \dd s & \text{if } \gamma \in \cA \cC(0,\infty), \\
	\infty & \text{otherwise},
	\end{cases}
	\end{equation*}
	where $\cL(x,v) = \sup_p pv - \cH(x,p)$ and where	$H : E \times \bR \rightarrow \bR$ is defined as
	\begin{equation} \label{eqn:Hamiltonian_positve_jumps}
	\cH(x,p) = \sum_{k=1}^\infty x v_k(x) \left[ e^{kp} - 1 \right].
	\end{equation}
\end{theorem}

\begin{remark}
	As above the result is a variant (without the diffusion part) of Theorem 10.22 of \cite{FK06} for Lévy processes on $\bR^d$ adapted to the context of $(0,\infty)$. Note that due to the possibly infinite number of jump types, non-compact state-space and uni-directionality of the model the results of \cite{KrPa16,PaSa17} do not apply.
\end{remark}
	
\begin{remark}
	The growth condition in \ref{item:growth_condition_growing_population} arises when controlling the growth of the large deviations of the process. It is satisfied in any case in which $\nu$ has finite support uniformly in $x$. 
	
	In the case that $\nu$ has unbounded support, a similar expression without the exponent would appear when controlling the range of the process without looking at the large deviations. This is achieved by shifting the linear part into a drift term, and then controlling the second order Taylor expansion. This is reminiscent of the situation for the characteristics of a Lévy process.
	On the large deviation scale, we have to control the exponential function, which leads to the exponential factor.
\end{remark}

\begin{example}[Yule process] \label{example:Yule}
	The Yule process \cite{AtNe72} is defined by taking rates independent of $n$ and $v_{1}(q) = 1$ and $v_k(q) = 0$ for all $k > 1$. The Hamiltonian has the simple form $\cH(x,p) = x \left[e^p - 1\right]$. Conditions (a) and (b) of Theorem \ref{theorem:LDP_growing_population} are trivially satisfies, whereas for (c) every $\alpha > 0$ works.
\end{example}

\begin{example}[Poisson $\beta$ number of children]
	Consider the case in which each individual has an offspring that has Poisson size with parameter $\beta$. In this case the Hamiltonian has the form
	\begin{equation*}
	\cH(x,p) = \sum_{k=1}^\infty x \frac{\beta^k}{k!} e^{-\beta} \left[e^{kp} - 1\right].
	\end{equation*}
	Condition (a) of Theorem \ref{theorem:LDP_growing_population} is trivially satisfied, whereas (b) is satisfied as we are considering a Poisson random variable. 
	
	For Condition (c), choose $\alpha = \frac{1}{2}\beta$. Note that
	\begin{equation*}
	\sum_{k=1}^\infty x \frac{\beta^k}{k!} e^{- \beta} \frac{k^2}{(1+x)^2} e^{\frac{1}{2}\beta /(1+x) }  \leq \sum_{k=1}^\infty  \frac{k^2 \beta^k}{k!} e^{- \frac{1}{2}\beta} < \infty.
	\end{equation*}
	Thus also for this case the large deviation principle holds.
\end{example}

\subsubsection{Spread of a disease: a model with susceptible and infected individuals}

We consider a simple SI model (with Susceptible and  Infected individuals) for the spread of a disease in a population \cite{Al10}. The population has constant size $n$, and is divided in two groups: $X_n$ susceptible, and $Y_n$ infected, individuals. Because the total size of the population is fixed, we write $Y_n = n - X_n$, so that the whole system can be described in terms of $X_n$.	

The susceptible individuals contract the disease upon contact with an infected individual at rate $\frac{\beta}{n}$, $\beta > 0$, which leads to a generator:
\begin{equation*}
\cG_n f(q)= \frac{\beta}{n}q(n-q)\left(f(q+1)-f(q)\right), \qquad q\in \{0,\dots,n\}.
\end{equation*}
In general, we can replace the rate $\frac{\beta}{n}q(n-q)$ by a rate $c_n(q)$ that satisfies $c_n \geq 0$ and $c_n(0) = 0$:
\begin{equation*}
\cA_n f(q)= c_n(q)\left(f(q+1)-f(q)\right), \qquad q\in \{0,\dots,n\}.
\end{equation*}

\begin{theorem}\label{theorem:LDP_SI_model}
	Let $E=(0,1]$. Suppose that there is a non-negative continuous function $c \in C(0,1]$ such that for every compact set $K \subseteq E$:
	\begin{equation*}
	\lim_{n\rightarrow \infty} \sup_{x\in K, xn \in \bN} \left|\frac{1}{n}c_n(nx)-c(x)\right| = 0.
	\end{equation*}
	Moreover suppose that :
	\begin{enumerate}
		\item $c(x) > 0$ for $x \in (0,1)$ and $c(1) = 0$,
		\item $c$ is decreasing in a neighbourhood of $1$.
	\end{enumerate}
	Let $X_n$ be solutions to the martingale problem for $\cA_n$. Suppose that $\frac{1}{n}X_n(0)$ satisfies the large deviation principle on E with speed $n$ and with good rate function $I_0$. Then the processes $\frac{1}{n}X_n$ satisfy the large deviation principle on $D_E(\bR^+)$ with good rate function
	\begin{equation*}
	I(\gamma) = \begin{cases}
	I_0(\gamma(0)) + \int_0^\infty \cL(\gamma(s),\dot{\gamma}(s)) \dd s & \text{if } \gamma \in \cA \cC(0,1], \\
	\infty & \text{otherwise},
	\end{cases}
	\end{equation*}
	where $\cL(x,v) = \sup_p pv - \cH(x,p)$ and where	$H : E \times \bR \rightarrow \bR$ is defined as
	\begin{equation*}
	\cH(x,p)= c(x)\left[e^p-1\right]
	\end{equation*}
\end{theorem}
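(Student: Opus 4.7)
The plan is to verify the hypotheses of the abstract large deviation result of Section~\ref{section:abstract_LDP}, using the one-dimensional well-posedness result of Section~\ref{section:framework_HJ_boundary}. First, I identify the boundary structure. The state space $E = (0,1]$ is a one-dimensional convex polyhedron in the sense of Definition~\ref{definition:base_space}: take $x_1 = 1, v_1 = -1$ (closed half-space $(-\infty,1]$) and $y_1 = 0, w_1 = 1$ (open half-space $(0,\infty)$), so that $\cJ = \{\emptyset,\{1\}\}$ with $E_\emptyset = E$, $E_{\{1\}} = \{1\}$, $\Gamma_\emptyset = \bR$ and $\Gamma_{\{1\}} = (-\infty,0]$. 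Set $\cH_\emptyset(x,p) = c(x)(e^p-1)$ and $\cH_{\{1\}}(1,p) := 0$. Since $c(1) = 0$, one has $\partial_p \cH_\emptyset(1,p) = 0 \in \Gamma_{\{1\}}$, so $\{\cH_J\}_{J \in \cJ}$ is a generating set of Hamiltonians in the sense of Definition~\ref{definition:generating_Hamiltonians}, and the boundary produces no discontinuity: $\cH_\dagger = \cH_\ddagger = \cH$.

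Next I verify the convergence of the rescaled nonlinear generators. For $f \in C_c^2(E)$,
\begin{equation*}
H_n f(x) \;=\; \tfrac{1}{n}e^{-nf(x)}\cA_n e^{nf}(nx) \;=\; \tfrac{c_n(nx)}{n}\left(e^{n(f(x+1/n)-f(x))} - 1\right),
\end{equation*}
which by the uniform convergence of $c_n/n$ to $c$ on compacts and a Taylor expansion of $f$ converges uniformly on compact subsets of $E$ to $\cH(x,f'(x))$. This yields the $\LIM$-convergence required by the Feng--Kurtz machinery. Exponential tightness of $\{\tfrac{1}{n}X_n\}_{n \geq 1}$ on $D_E(\bR^+)$ follows because $X_n$ is nondecreasing, bounded by $n$ (the population size is fixed), and the rescaled rates $c_n(nx)/n$ are uniformly bounded on compact subsets of $E$; hence a standard martingale estimate as in \cite[Section~4]{FK06} provides the required compact containment.

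The substantive task is to verify the comparison principle for $f - \lambda H_\dagger f = h$ on $E$. Since $\cH_\dagger = \cH_\ddagger = \cH$ and $\cH$ is smooth and strictly convex in $p$ on all of $E \times \bR$, the interior comparison is a routine doubling-of-variables argument. What requires attention is the behaviour at the boundary point $x = 1$, where $c$ degenerates to zero. To apply the one-dimensional comparison theorem of Section~\ref{section:framework_HJ_boundary}, I would use the assumption that $c$ is decreasing on a left-neighbourhood $[1-\delta,1]$ of $1$: combined with $c(1) = 0$, this provides the monotonicity of $c$ that allows the standard penalization argument at a double maximum $(x_\alpha, y_\alpha)$ with $x_\alpha \vee y_\alpha \to 1$ to absorb the error term $\cH(x_\alpha,p_\alpha) - \cH(y_\alpha,p_\alpha)$ with a favourable sign. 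The open boundary at $x = 0$ requires no special treatment since trajectories in $\cA\cC(E)$ cannot reach it, and the Hamiltonian test functions $f \in C_c^2(E)$ vanish in a neighbourhood of $0$.

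The main obstacle is the delicate interplay at $x = 1$: because $c(1) = 0$ and the Hamiltonian contributes $c(x)(e^{p}-1)$, naive penalization may not close since $e^p \to \infty$ as $p$ grows. The monotonicity of $c$ near $1$, exactly as in hypothesis~(c) of Theorem~\ref{theorem:birth_death_immigration}, is what guarantees that test-function penalizations placed at the boundary give the correct sign, allowing one to invoke the abstract one-dimensional well-posedness result. With comparison in hand, the abstract LDP theorem of Section~\ref{section:abstract_LDP} yields the stated large deviation principle, and the rate function is automatically of the form $I_0(\gamma(0)) + \int_0^\infty \cL(\gamma(s),\dot{\gamma}(s))\dd s$ on $\cA\cC(0,1]$ with $\cL$ the Legendre transform of $\cH$.
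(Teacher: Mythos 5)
Your overall route is the paper's: identify the polyhedral structure of $E=(0,1]$ with the single closed boundary at $1$, check $H \subseteq ex\text{-}\LIM_n H_n$ by Taylor expansion, and verify the one-dimensional comparison principle via Conditions \ref{condition:basic} and \ref{condition:boundary}, with the decisive observation that the monotonicity of $c$ near $1$ gives $\cH(x_\alpha,p_\alpha)-\cH(y_\alpha,p_\alpha)=[c(x_\alpha)-c(y_\alpha)][e^{p_\alpha}-1]\leq 0$ when $p_\alpha=\alpha(x_\alpha-y_\alpha)\to\infty$ (weak Condition \ref{condition:boundary}(2)), while the opposite direction is handled by the uniform limit $\cH(x,p)\to -c(x)$. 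That is exactly the paper's proof of Theorem \ref{theorem:LDP_SI_model}, and your identification of the generating set (with $\cH_{\{1\}}\equiv 0$ and $\cH_\dagger=\cH_\ddagger$ since $c(1)=0$) is correct.

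The one concrete omission is the good containment function. You dismiss the open boundary at $x=0$ on the grounds that the process is nondecreasing and that trajectories of finite cost cannot reach $0$; that is true probabilistically, but the framework you are invoking needs more. Theorem \ref{theorem:abstract_LDP} (both parts), Propositions \ref{proposition:FK8.9}--\ref{proposition:FK8.11}, and above all the doubling-of-variables Lemma \ref{lemma:doubling_lemma} all take as input a function $\Upsilon$ with compact sublevel sets \emph{in $E=(0,1]$}; without it, the optimizing pairs $(x_{\alpha,\varepsilon},y_{\alpha,\varepsilon})$ in the comparison argument need not stay in a compact subset of $(0,1]$ — sub- and supersolutions are arbitrary bounded semicontinuous functions on $(0,1]$, not objects that "cannot reach $0$" — and the whole localization collapses. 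The paper supplies this in Lemma \ref{lemma:containment_function_SI_model} by reusing the construction of Lemma \ref{lemma:containment_function_H_positive_jumps}: take $\Upsilon(x)=-\log x$ near $0$, which blows up at the open boundary and satisfies $\cH(x,\Upsilon'(x))=c(x)\bigl[e^{-1/x}-1\bigr]\leq 0$ there (this is also where the remark that $c$ need not be bounded near $0$ is used). Adding this construction closes the gap; the rest of your argument then goes through as written.
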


\begin{remark}
	$c$ does not need to be bounded in a neighbourhood of $0$.
\end{remark}

\begin{remark}
	As above, this result complements \cite{KrPa16} in the sense that the state-space is non-compact, the model uni-directional and the rates are possibly non-Lipschitz and non-bounded.
\end{remark}

\subsubsection{Birth and death processes with immigration and harvesting} \label{subsection:birth_death_immigration_harvesting}

We adjust the example in Section \ref{section:BD_immigration} by adding harvesting.

\smallskip

Again, for $n\in \bN$, let $X_n$ be a Markov process on $\bN$ with birth-rate $\lambda_n(q)$ , death-rate $\mu_n(q)$ for immigration-rate $\rho_n(q)$. In addition, a breeder kills a single individual at a rate $\beta_n(q)$ that depends possibly on the population size $q$. To make sure that the boundary $0$ can be accessed as well as exited sufficiently fast, we assume that there exists a $\varepsilon > 0$ such that 
	\begin{equation*}
	\inf_n \inf_{q \in \{0,\dots,\lfloor \varepsilon n\rfloor\}}\rho_n(q) \geq \varepsilon, \qquad \inf_n \inf_{q \in \{1,\dots,\lfloor \varepsilon n\rfloor\}}\beta_n(q) \geq \varepsilon.
	\end{equation*}
	As before, the generator reads
	\begin{equation*}
	\cA_nf(q) =
	\begin{cases}
	(\lambda_n(q)+ \rho_n(q))(f(q+1)-f(q))+(\mu_n(q) + \beta_n(q))(f(q-1)-f(q))&\text{if } q \in \bN\setminus\{0\},\\
	n\rho_n(0)(f(1)-f(0))&\text{if } q =0.\\
	\end{cases}
	\end{equation*} 		
As before, we also impose for each $n$ that 
\begin{equation*}
\sup_{q \in \bN} \frac{\lambda_n(q) + \rho_n(q) + \mu_n(q) + \beta_n(q)}{1+q} < \infty. 
\end{equation*}
to ensure well-posedness of the martingale problems, cf. Corollary 8.3.2 in \cite{EK86}.

\begin{theorem}\label{theorem:birth_death_immigration_harvesting}
	Let $E=[0,\infty)$. Suppose that there are positive and continuous function $\lambda, \mu, \rho$, and $\beta$, such that for every compact set $K \subseteq E$ :
		\begin{multline}\label{eqn:scaling_harvesting}
		\lim_{n\rightarrow \infty} \sup_{x\in K: nx \in \bN} \left\{ \left|\frac{1}{n}\lambda_n(nx)-\lambda(x)\right| +  \left|\frac{1}{n}\mu_n(nx)-\mu(x)\right| \right. \\
		\left. + \left|\frac{1}{n}\rho_n(nx)-\rho(x)\right| + \left|\frac{1}{n}\beta_n(nx)-\beta(x)\right| \right\} = 0.
		\end{multline}
	Moreover suppose that :
	\begin{enumerate}[(a)]
		\item $\lambda(0) = \mu(0) = 0$,  $\rho(0), \beta(0) >0$ and $\lambda$ and $\mu$ are strictly positive on $E^\circ$;
		\item the birth rate $\lambda$ and the immigration rate $\rho$ satisfy $\lambda(x)+\rho(x)=\cO(x\log(x))$ at $\infty$.
	\end{enumerate}
	Suppose that $(\tfrac{1}{n}X_n(0))$ satisfies a large deviation principle with speed $n$ and with good rate function $I_0$. Then the process $t \mapsto \tfrac{1}{n}X_n(t)$ satisfies a large deviation principle on $D_E(\bR^+)$ with speed $n$ and good rate function $I$:
	\begin{equation*}
	I(\gamma)=
	\begin{cases}
	I_0(\gamma(0))+ \int_0^\infty \cL(\gamma(s),\dot\gamma(s))ds & \text{if } \gamma \in \cA \cC(\bR^+), \\
	\infty & \text{otherwise}.
	\end{cases}
	\end{equation*}
	
	The Lagrangian is given in Definition \ref{definition:construction_of_L} constructed from generating set of Hamiltonians $\{H_{\partial-},H_{\emptyset}\}$, $(\partial_-=0)$ :
	\begin{align}
	\cH_\emptyset(x,p)&=(\lambda(x) +\rho(x))\left[e^p-1\right]+(\mu(x) + \beta(x))\left[e^{-p}-1\right], \label{equation:hamiltonian_interior_harvest} \\
	\cH_{\partial_-}(p)&=\rho(0)\left[e^p-1\right]. \label{equation:hamiltonian_zero_harvest}
	\end{align}
\end{theorem}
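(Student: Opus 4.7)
The plan is to verify the hypotheses of the abstract large deviation result of Section \ref{section:abstract_LDP} and apply the well-posedness result for Hamilton-Jacobi equations on the one-dimensional polyhedron $E = [0,\infty)$ from Section \ref{section:framework_HJ_boundary}. First I would identify the boundary structure: $\cJ = \{\emptyset, \partial_-\}$ with $E_\emptyset = (0,\infty)$, $E_{\partial_-} = \{0\}$, $\Gamma_\emptyset = \bR$, $\Gamma_{\partial_-} = [0,\infty)$, and check that $\bH = \{\cH_\emptyset, \cH_{\partial_-}\}$ given by \eqref{equation:hamiltonian_interior_harvest}--\eqref{equation:hamiltonian_zero_harvest} satisfies Definition \ref{definition:generating_Hamiltonians}: convexity and smoothness are immediate from the exponential form, and the gradient condition $\partial_p \cH_{\partial_-}(p) = \rho(0) e^p \geq 0 \in \Gamma_{\partial_-}$ is automatic since $\rho(0) > 0$.

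Next I would compute the nonlinear generator $H_n f(x) := \tfrac{1}{n} e^{-nf} \cA_n e^{nf}(x)$ for smooth $f$ and $x = q/n$. For $x > 0$, substituting gives exactly $\cH_\emptyset(x, f'(x)) + o(1)$ as $n \to \infty$ thanks to the scaling \eqref{eqn:scaling_harvesting}, while for $x = 0$, only the immigration term survives (since $\lambda_n(0) = \mu_n(0) = 0$ and harvesting of an empty population is impossible), giving $\cH_{\partial_-}(f'(0))$. This convergence yields the upper bound through the operator $H_\dagger$ and, after upper semi-continuous regularization, the lower bound through $H_\ddagger$ in the sense required by the Feng-Kurtz methodology.

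Exponential tightness, which is the key probabilistic input, would follow from a Lyapunov-function argument with $\Upsilon(x) = (1+x)\log(1+x) - x$: the $\cO(x \log x)$ growth of $\lambda + \rho$ in hypothesis (b), combined with the negative contribution of death and harvesting, ensures that $H_n \Upsilon$ is bounded above, which via the standard Dynkin/Echeverria argument gives exponential compact containment at speed $n$. Together with the convergence of Hamiltonians, this verifies the convergence hypotheses of Section \ref{section:abstract_LDP}.

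The main obstacle, and the step requiring the abstract theory developed in Section \ref{section:framework_HJ_boundary}, is the comparison principle for the Hamilton-Jacobi equation $f - \alpha H_\dagger f = h$ vs $f - \alpha H_\ddagger f = h$, which must handle the genuine discontinuity of $\cH_\dagger$ at $x = 0$: from the interior one sees $\cH_\emptyset(0,p) = \rho(0)(e^p - 1) + \beta(0)(e^{-p}-1)$, whereas at the boundary itself the active Hamiltonian is $\cH_{\partial_-}(p) = \rho(0)(e^p-1)$, and the two differ for $p \neq 0$. Verification reduces to checking the abstract hypotheses of Section \ref{section:framework_HJ_boundary} for this specific generating set: coercivity in $p$ of each $\cH_J$, the compatibility between the two Hamiltonians along the boundary (which reflects the fact that a trajectory leaving $0$ into $(0,\infty)$ is governed by $\cH_\emptyset$ whereas one constrained to remain at $0$ is governed by $\cH_{\partial_-}$), and the existence of a suitable containment function at infinity built from the hypothesis $\lambda + \rho = \cO(x\log x)$. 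Once the comparison principle holds, Section \ref{section:abstract_LDP} immediately produces the large deviation principle with rate function $I$, whose Lagrangian is the Legendre transform $\cL = \widehat{\cL}^*$ of $\cH_\dagger$ as described in Definition \ref{definition:construction_of_L} and Lemma \ref{lemma:properties_of_L}.
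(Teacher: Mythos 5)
Your overall strategy is the same as the paper's: verify Assumption \ref{assumption:LDP_assumption} by computing $H_n = \tfrac1n e^{-nf}\cA_n e^{nf}$ and identifying the two limiting Hamiltonians (one for $x>0$, one for $x=0$), establish $H_\dagger \subseteq ex\text{-}\subLIM H_n$ and $H_\ddagger \subseteq ex\text{-}\superLIM H_n$, produce a good containment function, verify the one-dimensional comparison principle via Conditions \ref{condition:basic} and \ref{condition:boundary} (coercivity of $\cH_\emptyset$ near $0$ from $\rho(0),\beta(0)>0$, and the sign of $\cH_\emptyset(0,p)-\cH_{\partial_-}(0,p)=\beta(0)[e^{-p}-1]$), and conclude with Theorem \ref{theorem:abstract_LDP}. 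Those parts are fine.

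There is, however, a concrete error in your exponential tightness step: the Lyapunov function $\Upsilon(x)=(1+x)\log(1+x)-x$ does not satisfy condition ($\Upsilon$d) of Definition \ref{definition:good_containment_function} under the stated hypotheses. With $\Upsilon'(x)=\log(1+x)$ one gets
\begin{equation*}
\cH_\emptyset(x,\Upsilon'(x)) = \bigl(\lambda(x)+\rho(x)\bigr)\,x \;-\; \bigl(\mu(x)+\beta(x)\bigr)\frac{x}{1+x},
\end{equation*}
and hypothesis (b) only gives $\lambda(x)+\rho(x)=\cO(x\log x)$, so the first term is of order $x^2\log x$ and is unbounded; nothing in the hypotheses forces $\mu+\beta$ to grow fast enough to compensate (e.g.\ $\mu(x)=x$, $\beta$ bounded is admissible). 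Your claim that the ``negative contribution of death and harvesting'' saves the bound is therefore unjustified. The growth condition $\cO(x\log x)$ is calibrated to a much slower containment function: the paper takes $\Upsilon(x)=\log\log x$ for large $x$ (as in Lemma \ref{lemma:containment_function_1d_immi}), for which $\Upsilon'(x)=\tfrac{1}{x\log x}$ and $e^{\Upsilon'(x)}-1\sim \tfrac{1}{x\log x}$, so that $(\lambda(x)+\rho(x))(e^{\Upsilon'(x)}-1)$ stays bounded. Replace your $\Upsilon$ by this one and the rest of the argument goes through.
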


\begin{remark}
	In this example, we work in the context that the Hamiltonian depends on the location $x \in E$. In this context $E = [0,\infty) = \cB_{0,1}$. Thus $\cJ(E) = \{1\}$ as the left boundary point $0$ is included in $E = [0,\infty)$. For intuitive understanding, we write $\partial_-$ for the set $\{1\}$ as it corresponds as the set $E_{\{1\}}$ equals the left boundary point $\{0\}$.
	
	To fulfil Definition \ref{definition:generating_Hamiltonians}, we need to specify continuously differentiable $\cH_{\emptyset} : E \times \bR \rightarrow \bR$ and $\cH_{\{1\}} = \cH_{\partial_-} : \{0\} \times \bR \rightarrow \bR$ such that $\partial_p \cH_{\emptyset}(x,p) \subseteq \bR$ and $\partial_p \cH_{\partial_-}(0,p) \subseteq [0,\infty)$.
	
	This is indeed satisfied as can be read of from \eqref{equation:hamiltonian_interior_harvest} and \eqref{equation:hamiltonian_zero_harvest}. 
\end{remark}

\subsection{A discussion on the failure of the large deviation principle for pure birth and death processes}	\label{subsection:failure_of_LDP}

We end our discussion of one-dimensional processes by showing that a large deviation principle of the type considered above is not to be expected for birth and death processes without immigration or harvesting,  or more generally without a mechanism by which the boundary and the interior show similar behaviour. Our example is of similar nature to that of Example E in \cite{ShWe05}.

Consider the process $X_n(t)$ with generator
\begin{equation*}
\cA_nf(q)= a q \left(f(q+1)-f(q)\right) + b q \left(f(q-1)-f(q)\right)
\end{equation*}
where $a > 0, b \geq 0$. Note that $a = 1, b = 0$ corresponds to the Yule process of Example \ref{example:Yule}.

\smallskip

We argue first that the large deviation principle like the ones in the other examples cannot hold for the Yule process on $[0,\infty)$, unlike  on $(0,\infty)$ as in Section \ref{section:LDP_for_growing_populations}. If this result would be true then our theory would indicate that the Hamiltonian $\cH$ corresponding to the principle has the form
\begin{equation*}
\cH(x,p) = x \left[e^p - 1\right],
\end{equation*}
so that the path-space Lagrangian becomes
\begin{equation*}
\cL(x,v) = v \log \frac{v}{x} - v + x.
\end{equation*}
The trajectory $t \mapsto t^2$ can be seen to have finite cost. This indicates that the large deviation cost to go from $0$ to $1$ in time $1$ is finite. However, if $X_n(0) = 0$ for all $n$, the large deviation cost to be in $1$ at time $1$ is infinity.

\smallskip

In the general setting, with 
\begin{equation*}
\cH(x,p) = ax \left[e^p - 1\right] + b x \left[ e^{-p} - 1\right],
\end{equation*}
$a > 0, b \geq 0$, one can show that the large deviation contribution of having no jumps of size $-1$ is finite. As the cost to go from $0$ to $1$ in finite time, just using positive jumps is finite as for the Yule process, we find that also in this case a contradiction if $X_n(0) = 0$ for all $n$.

\smallskip

This show that care is needed when formulating a large deviation principle for the rescaled population size of a pure birth-death process when allowing $0$ as a starting point. Even though we have not talked about the technical ingredients for our proofs, we want to indicate that this implies that Condition \ref{condition:boundary} and the corresponding Theorem \ref{theorem:comparison_principle_1d} for one-dimensional processes leaves out the pure birth-death processes for good reason.

\subsection{Large deviations for interacting species with immigration and harvesting} \label{section:multi_d_examples}

We proceed our discussion with a general multi-dimensional model. In this setting, we only offer one set  of results that we think covers a range of interesting models. More models can be treated on a model by model basis by using the methods of Sections \ref{section:framework_HJ_boundary} and \ref{section:abstract_LDP}.

For each $n$, we consider $k$ different interacting species that evolve in time. We denote their population sizes at time $t$ by $(X^1(t),\dots,X^k(t))$.

We will denote possible transitions of the Markov process by vectors $\Gamma = (\Gamma_1,\dots,\Gamma_l) \in \bZ^k$ indicating how many individuals of each type get born of die. We write $\delta_i \in \bZ^k$ for the vector with a one in location $i$ and zero elsewhere. We consider three kinds of transitions, denoted by sets $T_{int},T_{imm},T_{har}$ corresponding to interaction between or within species, immigration and harvesting.

\begin{condition} \label{condition:interacting_species}
	We assume that the cardinality of $T_{int}$ is finite. In addition, we assume
	\begin{description}
		\item[interaction] For each $\Gamma \in T_{int}$, we have a rate $r_{\Gamma,n}(q)$ at which the transition from $q$ to $q + \Gamma$ occurs. Clearly, if there are no individuals of species $i$, then the rate of a transition in which individuals of species $i$ die is $0$. In other words, if there is an $i$ with $\Gamma_i <0$ and $q_i = 0$, then $r_{\Gamma,n}(q) = 0$. 
		\item[immigration] $T_{imm} = \left\{\delta_i \, \middle| \, i \in \{1,\dots,k\}\right\}$. The corresponding transition rate $a_{i,n}(q)$ is assumed to be strictly positive. 
		\item[harvesting] $T_{har} = \left\{- \delta_i \, \middle| \, i \in \{1,\dots,k\}\right\}$. The corresponding transition rate $b_{i,n}(q)$ is assumed to be strictly positive. 
	\end{description}
\end{condition}

The rates lead to a generator $A_n$ that reads
\begin{multline} \label{eqn:generator_interacting_species}
A_nf(q) = \sum_{\Gamma \in T_{int}} r_{\Gamma,n}(q) \left[f(q + \Gamma) - f(q) \right] + \sum_{i} a_{i,n}(q) \left[f(q + \delta_i) - f(q) \right] \\
+ \sum_{i} \bONE_{\{q_i\neq 0\}}b_{i,n}(q) \left[f(q - \delta_i) - f(q) \right]. 
\end{multline}
For well-posedness of the martingale problems, we impose for each $n$ that
\begin{equation} \label{eqn:well_poseness_multi_d}
\sup_{q \in \bN^k } \frac{\sum_{\Gamma \in T_{int}} r_{\Gamma,n}(q)+  \sum_{i} a_{i,n}(q)
	+ \sum_{i}b_{i,n}(q)}{1 + \sum_i q_i} < \infty.
\end{equation}
Well posedness is established by Theorem 8.3.1 in \cite{EK86} by taking $\gamma(q) = \eta(q) = 1 + \sum_i q_i$.

\smallskip

We will consider the large deviations on the space $E := [0,\infty)^k$ after rescaling the process by $n^{-1}$. For a momentum $p \in \bR^d$, denote by $\Gamma \cdot p$ the vector obtained by component-wise multiplication $(\Gamma_1 p_1,\dots,\Gamma_k p_k)$. Note that $\delta_i \cdot p = p_i\delta_i$.

\begin{theorem} \label{theorem:interacting_species}
	Let  $E := [0,\infty)^k$ and let Condition \ref{condition:interacting_species} be satisfied.	Suppose that there are positive and continuous function $r_\Gamma,a_i,b_i$ with $\Gamma \in T_{int}$ and $i \in \{1,\dots,k\}$ so that for each compact set $K \subseteq E$:
	
	\begin{multline}\label{eqn:scaling_multi_d}
	\lim_{n\rightarrow \infty} \sup_{x\in K} \left\{ \sum_{\Gamma \in T_{int}} \left|\frac{1}{n}r_{\Gamma,n}(nx)-r_\Gamma(x)\right| +  \sum_{i} \left|\frac{1}{n}a_{i,n}(nx)-a_i(x)\right| \right. \\
	\left. + \sum_{i} \left|\frac{1}{n}b_{i,n}(nx)-b_i(x)\right| \right\} = 0.
	\end{multline}
	Moreover suppose that $a_i, b_i$, $i \in \{1,\dots,k\}$,  are strictly positive and $r_\gamma$, $\Gamma \in T_{int}$,  is non-negative.
	
	Finally assume that for any $\Gamma \in T_{int}$ with $\sum_i \Gamma_i > 0$ we have $r_\Gamma(x) = \cO(s(x)\log(s(x)))$ with $s(x) = \sum x_i$  at infinity. In addition, assume $a_i(x) + b_i(x) = \cO(s(x) \log s(x))$ at infinity.

	\smallskip
	
	Let $(X_n^1,\dots,X_n^k)$ be a solution to the martingale problem for $A_n$ as in \eqref{eqn:generator_interacting_species}. Suppose that $(\tfrac{1}{n}X_n^1(0), \dots, \tfrac{1}{n}X_n^k(0))$ satisfies a large deviation principle with speed $n$ and with good rate function $I_0$.

	Then the process $t \mapsto \left(\tfrac{1}{n}X_n^1(t),\dots,X_n^k(t)\right)$ satisfies a large deviation principle on $D_E(\bR^+)$ with speed $n$ and good rate function $I$:
	\begin{equation*}
	I(\gamma)=
	\begin{cases}
	I_0(\gamma(0))+ \int_0^\infty \cL(\gamma(s),\dot\gamma(s))ds & \text{if } \gamma \in \cA \cC(\bR^+,\bR^+), \\
	\infty & \text{otherwise}.
	\end{cases}
	\end{equation*}
	The Lagrangian is given in Definition \ref{definition:construction_of_L} constructed from generating set of Hamiltonians $\bH$ defined below. Write
	\begin{equation*}
	\cH_0(x,p) = \sum_{\Gamma \in T_{int}} r_\Gamma(x) \left[e^{\Gamma \cdot p} - 1 \right] 
	\end{equation*}
	and define $\bH$ as
		\begin{align*}
		\cH_\emptyset(x,p) & = \cH_0(x,p)+\sum_{i=1}^d a_i(x)(e^{p_i}-1)+b_i(x)(e^{-p_i}-1), \\
		\intertext{and for $J\in\cJ(E)$ and $x\in E_J$}
		\cH_J(x,p) & =  \cH_0(x,p)+\sum_{i\in J} a_i(x)(e^{p_i}-1) + \sum_{i\notin J}a_i(x)(e^{p_i}-1)+b_i(x)(e^{-p_i}-1).
		\end{align*}
\end{theorem}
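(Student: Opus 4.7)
The strategy is to invoke the abstract large deviation framework of Section \ref{section:abstract_LDP}, which reduces the problem to four ingredients: (i) verifying that $\bH = \{\cH_J\}_{J \in \cJ(E)}$ is a generating set in the sense of Definition \ref{definition:generating_Hamiltonians}; (ii) proving operator convergence of the non-linear generators $H_n f := \tfrac{1}{n} e^{-nf} A_n e^{nf}$ to the pair $(H_\dagger, H_\ddagger)$; (iii) establishing exponential tightness of $(\tfrac{1}{n}X_n)$; and (iv) invoking the comparison principle for the Hamilton-Jacobi equation from Section \ref{section:framework_HJ_boundary}.

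For (i), convexity in $p$ and smoothness of each $\cH_J$ are immediate from the exponential form. The only non-trivial requirement is $\partial_p \cH_J(x,p) \in \Gamma_J$ for $x \in E_J$, which amounts to $\partial_{p_k} \cH_J(x,p) \geq 0$ for every $k \in J$. Since $x \in E_J$ forces $x_k = 0$ and, by the vanishing part of Condition \ref{condition:interacting_species}, $r_\Gamma(x) = 0$ for all $\Gamma \in T_{int}$ with $\Gamma_k < 0$, the contribution of $\cH_0$ to $\partial_{p_k}$ is a non-negative sum; adding the strictly positive immigration term $a_k(x) e^{p_k}$ (and noting that the potentially negative harvesting term in direction $k$ was dropped precisely because $k \in J$) gives the required sign.

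For (ii), I would apply \eqref{eqn:scaling_multi_d} combined with a term-by-term Taylor expansion of $H_n f$ on compactly supported smooth test functions. At any $x$ in the relative interior of a face $E_J$, approximating sequences $q_n/n \to x$ eventually satisfy $(q_n)_k = 0 \Leftrightarrow k \in J$, so the indicator $\bONE_{\{q_i \neq 0\}}$ in \eqref{eqn:generator_interacting_species} stabilises and the prelimit converges to $\cH_J(x, \nabla f(x))$; for general sequences the limit is sandwiched between $\cH_\ddagger$ and $\cH_\dagger$, which is exactly what the two-operator framework of Section \ref{section:abstract_LDP} demands. For (iii), I would use a containment/Lyapunov function of the form $\Upsilon(x) = \log(1 + \sum_i x_i)$, and exploit the growth bound $r_\Gamma(x), a_i(x)+b_i(x) = \cO(s(x) \log s(x))$ to show $H_\emptyset(x, c\nabla \Upsilon(x))$ is bounded above for small $c > 0$, from which exponential tightness follows by the standard exponential martingale argument.

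The main obstacle will be (iv): verifying the hypotheses of the multi-dimensional comparison principle of Section \ref{section:framework_HJ_boundary} for this family of $\cH_J$'s. The key structural feature that should make this tractable is the strict positivity of immigration $a_i > 0$ on every coordinate and every boundary face; this provides the coercivity in $p_k$ at the face $\{x_k = 0\}$ needed to tame the boundary behaviour, in direct analogy with the role played by $\rho(0) > 0$ in Theorem \ref{theorem:birth_death_immigration_harvesting}. Once the comparison principle is in hand, the Lagrangian form of the rate function, together with its goodness and the convex-hull/lsc-regularisation identities relating $\cL$ and $\widehat{\cL}$, follow from the abstract identification procedure of Section \ref{section:abstract_LDP} and Lemma \ref{lemma:properties_of_L}.
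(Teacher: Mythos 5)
Your overall route is the same as the paper's: verify that $\bH$ is a generating set, establish $H_\dagger \subseteq ex-\subLIM_n H_n$ and $H_\ddagger \subseteq ex-\superLIM_n H_n$ by the same stabilisation-of-indicators argument used for Theorem \ref{theorem:birth_death_immigration_harvesting}, produce a good containment function, and conclude via Theorem \ref{theorem:abstract_LDP} together with the multi-dimensional comparison principle of Theorem \ref{theorem:comparison_principle_multid} (whose Condition \ref{condition:boundary_multi_d}(a) is satisfied here because $a_i,b_i>0$ and $\cH_0$ is $C^1$). Your sign check for $\partial_{p_k}\cH_J(x,p)\geq 0$ on $E_J$ is also the right argument.

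There is, however, one step that fails as written: the containment function. With $\Upsilon(x)=\log(1+s(x))$ one has $\nabla\Upsilon(x)=\tfrac{1}{1+s(x)}(1,\dots,1)$, so for a transition $\Gamma$ with $\sum_i\Gamma_i>0$ the corresponding term in $\cH_\emptyset(x,c\nabla\Upsilon(x))$ behaves like
\begin{equation*}
r_\Gamma(x)\left[e^{c\sum_i\Gamma_i/(1+s(x))}-1\right]\sim r_\Gamma(x)\,\frac{c\sum_i\Gamma_i}{1+s(x)} = \cO\bigl(c\log s(x)\bigr),
\end{equation*}
which diverges as $s(x)\to\infty$ no matter how small $c>0$ is, because the rates are only assumed to be $\cO(s(x)\log s(x))$ rather than $\cO(s(x))$. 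So ($\Upsilon$d) of Definition \ref{definition:good_containment_function} is violated. The paper's Lemma \ref{lemma:containment_interacting_species} instead takes $\Upsilon$ equal to $\log\log s(x)$ for $s(x)\geq e^e$ (suitably smoothed near the origin), exactly as in the one-dimensional Lemma \ref{lemma:containment_function_1d_immi}: then $|\nabla\Upsilon(x)|=\cO(1/(s(x)\log s(x)))$ and each term is $\cO(1)$. Replacing your Lyapunov function by this doubly logarithmic one repairs the argument; everything else in your plan matches the paper's proof.
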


\begin{remark}
	As before, we specify the boundaries. As $E = [0,\infty)^k = \bigcap_{i =1}^k \cB_{0,e_i}$, where $e_i$ is the unit-vector in the $i$-th direction, the set $\cJ(E)$ equals the set of all subsets of $\{1,\dots,k\}$. For $J \subseteq \{1,\dots,k\}$ we have $E_J = \left\{x \in [0,\infty)^k \, | \, x_i =0 \text{ if } i \in J \right\}$ and $\Gamma_J := \left\{p \in \bR^k \, | \, p_i \geq 0 \text{ if } i \in J \right\}$.

\end{remark}

To conclude this section, we describe a prey-predator system and a SI model. To apply the main result of this section, we assume in both cases that there is immigration and harvesting. 
Because interactions between different species bring rates that are more than linear in the coordinates (e.g two individuals of different species meeting will induce a quadratic rate in the coordinates), the martingale problem might not be well-posed (see \ref{eqn:well_poseness_multi_d}). Therefore we introduce a "carrying capacity" $\kappa$ that describe the maximal population of a given species the environment can sustain \cite{ER11}. Once that population has grown to this carrying capacity, competition for resources will prevent reproduction. If that population grows above $\kappa$ through immigration, the other species will still interact as if there was only $\kappa$ individuals, the environment being saturated.

\begin{example}[Prey and predator system]
	We consider two populations of individuals whose sizes are given by the vector $\left(X^1_n(t),X^2_n(t)\right)$. The individuals of the second species hunt that of the first species. On top of some harvesting and immigration rates, the different interaction rates given the current population sizes $\left(X^1_n(t),X^2_n(t)\right)=(q_1,q_2)$ are given by: 
	\begin{description}
		\item[Birth of a prey:] The prey population grows with reproduction but is limited by competition. $\Gamma_1=\left(1,0\right)$ with rate $r_{1,n}(q)=q_1(n\kappa-q_1)$ if $q_1\leq n\kappa$ and $r_{1,n}(q)=0$ otherwise.	
		\item[Death of a prey:] A prey dies upon meeting a predator at rate $\alpha$. $\Gamma_1=\left(-1,0\right)$ with rate $r_{2,n}(q)=\frac{\alpha}{n}q_1 q_2$ if $q_1\leq n\kappa$ and $r_{2,n}(q)=\alpha\kappa q_2$ otherwise.
		\item[Birth of a predator:] The predator population grows when a predator kills and eats prey at rate $\beta$. $\Gamma_3=\left(0,1)\right)$ with rate $r_{3,n}(q)=\frac{\beta}{n}q_1q_2$ if $q_1\leq n\kappa$ and $r_{3,n}(q)=\beta \kappa q_2$ otherwise. The cut-off corresponds to the maximum carrying capacity of the population of prey. That is: the population of prey is large enough so that the amount of food for the population of predators is essentially unlimited, implying that their reproduction rate only depends on their own population size.
		\item[Death of a predator:] The predators live for an exponential time with mean $\mu$. That is: $\Gamma_4=\left(0,-1)\right)$ and $r_{4,n}(q)=\mu q_2$.
	\end{description}	
	The model satisfies the conditions of Theorem \ref{theorem:interacting_species} with $\cH_0$ :
	\begin{multline*}
	\cH_0(x,p) = x_1(\kappa-(x_1\wedge \kappa)\left(e^{p_1}-1\right)+\alpha (x_1\wedge \kappa)x_2\left(e^{-p_1}-1\right)\\
	+ \beta (x_1\wedge \kappa) x_2 \left(e^{p_2}-1\right)+\mu y \left(e^{-p_2}-1\right).
	\end{multline*}
	
\end{example}	

\begin{example}[SI model with recovery and population dynamics]
	Consider a population where individuals are divided into three groups:  the susceptible, the infected, and the recovered immune individuals. We model the sizes of these three groups by the process $(X_n^1,X_n^2,X_n^3)$.Denote $S(x) = x_1 + x_2 + x_3$. Consider the following interaction transitions for the process given  $\left(X^1_n(t),X^2_n(t),X^3_n(t)\right)=(q_1,q_2,q_3)$: 
	\begin{description}
		\item[Infection:] A susceptible individual has a chance to fall sick upon meeting an infected individual . The transition vector reads $\Gamma_{1}=(-1,1,0)$ and the transition rate is $r_{1,n}(q)=\frac{\beta}{n}(q_1\wedge \kappa)q_2$. 
		\item[Recovery:] An infected individual recovers from the disease and gains immunity.  $\Gamma_{2}=(0,-1,1)$ and the transition rate is $r_{2,n}(q)=\alpha q_2$.
		\item[Birth of an individual:] Suppose that individual are born susceptible, $\Gamma_{3}=(1,0,0)$ and $r_{3,n}(q)=\frac{S(q)}{n}(n\kappa-S(q))$ if $s(q)\leq n\kappa$ and $r_{3,n}(q)=0$ otherwise.
		\item[Death of an individual:] for $k \in \{1,2,3\}$, we have transitions $\Gamma_{4,k}=-\delta_k$ with $r_{4,k,n}(q)=\mu_k q_k$.
	\end{description}

	The conditions for Theorem \ref{theorem:interacting_species} hold with $\cH_0$ :
	\begin{multline*}
	\cH_0(x,p) = \beta (x_1\wedge \kappa) x_2\left(e^{p_2-p_1}-1\right)+\alpha x_2 \left(e^{p_3-p_2}-1\right)\\
	+S(x)(\kappa-(S(x)\wedge \kappa))\left(e^{p_1}-1\right)+\sum_{k=1}^3 \mu_k x_k \left(e^{-p_k}-1\right).
	\end{multline*}
	
\end{example}

\subsection{Discussion on extendability of the methods} \label{section:extendability}

In the beginning of Section \ref{section:LDP_for_examples} we stated that our results serve as an illustration of the scope of the general framework in the paper. We discuss shortly the extent of applicability of our methods.

\smallskip

For the one dimensional setting, one can consider models that combine features of the examples of Section \ref{section:comparison_1d}. A quick scan of the proofs of these results shows that the three main conditions that need to be verified (notation and definitions will be introduced below) are:
\begin{itemize}
	\item There needs to be a good containment function $\Upsilon$ for $\bH$ (See Definition \ref{definition:good_containment_function} below). A containment function is a type of Lyapunov function that implies that the process remains in compact sets on finite time intervals with large probability.
	\item We need to verify Conditions \ref{condition:basic} and \ref{condition:boundary} for $\bH$. These conditions can simply be read of from the generating set of Hamiltonians.
\end{itemize}
It should be noted that Condition \ref{condition:basic} is not satisfied for the generating set of Hamiltonians that arises from the scaling limit of diffusions with reflecting boundary, see e.g. Sections 9.3 and 10.5 in \cite{FK06}. A relaxation of this condition that covers both this case, as well as reflected diffusions, would be of interest.

\smallskip

For the multi-dimensional setting, we have only considered the case $E = [0,\infty)^d$ in Theorem \ref{theorem:interacting_species} which in turn is immediately derived from Theorem \ref{theorem:comparison_principle_multid}. 

Theorem \ref{theorem:comparison_principle_multid} can be extended in a straightforward way by adapting Step 2 of its proof. The extension is restricted, however, to the context where the transitions that are discontinuous at the boundary are perpendicular to the boundary. 

The only work known to the authors that includes a comparison principle for Hamiltonians with discontinuous rates for non-perpendicular jumps is \cite{DuIiSo90}. In this setting, the rates, however, are spatially homogeneous. A comparison principle for a Hamiltonian that features a combination of inhomogeneous rates and discontinuity for non-perpendicular jumps would be of interest as well.
What is necessary for such a substantial extension is unclear to the authors.

\section{A general framework for Hamilton-Jacobi equations with a boundary} \label{section:framework_HJ_boundary}

In the approach to large deviations by \cite{FK06} and \cite{DuIiSo90}, also applied more recently in e.g. \cite{DFL11, CoKr17,KrReVe19}, the distributional information of the process at finite $n$ related to the large deviations is encoded in the solutions $f_n$ of a class of Hamilton-Jacobi equation $f - \lambda H_n f = h$, $\lambda > 0, h \in C_b(E_n)$. Given that the Hamiltonians $H_n$ have a natural limiting upper bound $H_\dagger$ and lower bound $H_\ddagger$, semi-relaxed limits $\overline{f}$ and $\underline{f}$ of $f_n$, see \eqref{eqn:semi_relaxed_limits} below, give a sub-solution and super-solution to
\begin{equation*}
f - \lambda H_\dagger f = h, \qquad f - \lambda H_\ddagger f = h
\end{equation*}
respectively. To show that the sub- and super-solution coincide and give a proper `solution' to this combination of equations, we need the comparison principle. This we carry out in the sections below.

Throughout the section, we assume that $E$ is a convex polyhedron with non-empty interior.

\subsection{Viscosity solutions to Hamilton-Jacobi equations}

\begin{definition}[Viscosity solutions]
	Let $H_\dagger \subseteq C_l(E) \times \cM_u(E,\overline{\bR})$ and $H_\ddagger \subseteq C_u(E) \times \cM_l(E,\overline{\bR})$ and let $\lambda > 0$ and $h \in C_b(E)$. Consider the Hamilton-Jacobi equations
	\begin{align}
	f - \lambda H_\dagger f & = h, \label{eqn:differential_equation_dagger} \\
	f - \lambda H_\ddagger f & = h. \label{eqn:differential_equation_ddagger}
	\end{align}
	We say that $u$ is a \textit{(viscosity) subsolution} of equation \eqref{eqn:differential_equation_dagger} if $u$ is bounded, upper semi-continuous and if, for every $f \in \cD(H_\dagger)$ there exists a sequence $x_n \in \mathbb{R}$ such that
	\begin{gather*}
	\lim_{n \uparrow \infty} u(x_n) - f(x_n)  = \sup_x u(x) - f(x), \\
	\lim_{n \uparrow \infty} u(x_n) - \lambda H_\dagger f(x_n) - h(x_n) \leq 0.
	\end{gather*}
	We say that $v$ is a \textit{(viscosity) supersolution} of equation \eqref{eqn:differential_equation_ddagger} if $v$ is bounded, lower semi-continuous and if, for every $f \in \cD(H_\ddagger)$ there exists a sequence $x_n \in \mathbb{R}$ such that
	\begin{gather*}
	\lim_{n \uparrow \infty} v(x_n) - f(x_n)  = \inf_x v(x) - f(x), \\
	\lim_{n \uparrow \infty} v(x_n) - \lambda H_\ddagger f(x_n) - h(x_n) \geq 0.
	\end{gather*}
	We say that $u$ is a \textit{(viscosity) solution} of equations \eqref{eqn:differential_equation_dagger} and \eqref{eqn:differential_equation_ddagger} if it is both a subsolution to \eqref{eqn:differential_equation_dagger} and a supersolution to \eqref{eqn:differential_equation_ddagger}.
	
	We say that \eqref{eqn:differential_equation_dagger} and \eqref{eqn:differential_equation_ddagger} satisfy the \textit{comparison principle} if for every subsolution $u$ to \eqref{eqn:differential_equation_dagger} and supersolution $v$ to \eqref{eqn:differential_equation_ddagger}, we have $u \leq v$.
\end{definition}

We will study the Hamilton-Jacobi equations for the operators $H_\dagger,H_\ddagger$ that have been constructed from a generating set of Hamiltonians $\{\cH_J\}_{J \in \cJ(E)}$ as in Definition \ref{definition:generating_Hamiltonians}.

\subsection{A general method to verify the comparison principle}

In this section, we give the main technical results used in the text above that can be used to verify comparison principles. These methods are fairly standard, do not use any structure of our particular setting in a crucial way, and follow those in \cite{CIL92,FK06,Kr16b,CoKr17}. The proofs of Proposition \ref{proposition:comparison_conditions_on_H} and Lemma \ref{lemma:control_on_H} do require minor adjustments of these methods and have therefore been given in Appendix \ref{appendix:abstract_comparison_verification} below. The results are based on a good containment function and a good penalization function.

Good containment functions play the role of a Lyapunov function and allow our analysis to be restricted to compact regions in $E$. The penalization functions are used in a distance like way.

\begin{definition} \label{definition:good_containment_function}
	Let $\{\cH_J\}_{J \in \cJ(E)}$ be a generating set of Hamiltonians.  We say that $\Upsilon : E \rightarrow \bR$ is a \textit{good containment function} (for $\bH = \{\cH_J\}_{J \in \cJ(E)}$) if
	\begin{enumerate}[($\Upsilon$a)]
		\item $\Upsilon \geq 0$ and there exists a point $x_0$ such that $\Upsilon(x_0) = 0$,
		\item $\Upsilon$ is twice continuously differentiable, 
		\item $\Upsilon$ has compact sub-level sets: for every $c \geq 0$, the set $\{x \in E \, | \, \Upsilon(x) \leq c\}$ is compact,
		\item we have $\max_{J \in \cJ(E)} \sup_{z \in E_J} \cH_J(z,\nabla \Upsilon(z)) < \infty$.
	\end{enumerate}
\end{definition}

\begin{definition}
	We say that $\Psi : E^2 \rightarrow \bR^+$ is a \textit{good penalization functions} if
	\begin{enumerate}[($\Psi$a)]
		\item $\Psi(x,y) = 0$ if and only if $x = y$;
		\item $\Psi$ is twice continuously differentiable in both coordinates;
		\item $(\nabla \Psi(\cdot,y))(x) = - (\nabla \Psi(x,\cdot))(y)$.
	\end{enumerate}
\end{definition}

The first result can be found as Proposition 3.7 of \cite{CIL92} or Lemma 9.2 in \cite{FK06}.

\begin{lemma}\label{lemma:doubling_lemma}
	Let $u$ be bounded and upper semi-continuous, let $v$ be bounded and lower semi-continuous, let $\Psi : E^2 \rightarrow \bR^+$ be a good penalization function and let $\Upsilon : E \rightarrow \bR^+$ have compact sub-level sets. Fix $\varepsilon > 0$. For every $\alpha >0$ there exist points $x_{\alpha,\varepsilon},y_{\alpha,\varepsilon} \in E$, such that
	\begin{multline*}
	\frac{u(x_{\alpha,\varepsilon})}{1-\varepsilon} - \frac{v(y_{\alpha,\varepsilon})}{1+\varepsilon} - \alpha \Psi(x_{\alpha,\varepsilon},y_{\alpha,\varepsilon}) - \frac{\varepsilon}{1-\varepsilon}\Upsilon(x_{\alpha,\varepsilon}) -\frac{\varepsilon}{1+\varepsilon}\Upsilon(y_{\alpha,\varepsilon}) \\
	= \sup_{x,y \in E} \left\{\frac{u(x)}{1-\varepsilon} - \frac{v(y)}{1+\varepsilon} -  \alpha \Psi(x,y)  - \frac{\varepsilon}{1-\varepsilon}\Upsilon(x) - \frac{\varepsilon}{1+\varepsilon}\Upsilon(y)\right\}.
	\end{multline*}
	Let $\{x_{\alpha,\varepsilon},y_{\alpha,\varepsilon}\}_{\alpha,\varepsilon}$ be such a collection of points. For every $\varepsilon > 0$ we have 
	\begin{enumerate}[(a)]
		\item The set $\{x_{\alpha,\varepsilon}, y_{\alpha,\varepsilon} \, | \,  \alpha > 0\}$ is relatively compact in $E$.
		\item All limit points of $\{(x_{\alpha,\varepsilon},y_{\alpha,\varepsilon})\}_{\alpha > 0}$ are of the form $(z,z)$ and for these limit points we have $u(z) - v(z) = \sup_{x \in E} \left\{u(x) - v(x) \right\}$.
		\item We have 
		\[
		\lim_{\alpha \rightarrow \infty}  \alpha \Psi(x_{\alpha,\varepsilon},y_{\alpha,\varepsilon}) = 0.
		\]
	\end{enumerate}
\end{lemma}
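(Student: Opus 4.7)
The plan is to follow the standard doubling variables technique of viscosity solution theory (cf.\ \cite[Proposition 3.7]{CIL92} or \cite[Lemma 9.2]{FK06}). Set
\[
\Phi_\alpha(x,y) := \frac{u(x)}{1-\varepsilon} - \frac{v(y)}{1+\varepsilon} - \alpha\Psi(x,y) - \frac{\varepsilon}{1-\varepsilon}\Upsilon(x) - \frac{\varepsilon}{1+\varepsilon}\Upsilon(y),\qquad M_\alpha := \sup_{x,y\in E}\Phi_\alpha(x,y).
\]
First I would check existence of $(x_{\alpha,\varepsilon},y_{\alpha,\varepsilon})$. Since $u$ is bounded USC, $v$ is bounded LSC, $\Psi$ is continuous (from ($\Psi$b)), and $\Upsilon$ is LSC with compact sublevel sets (hence closed lower level sets), $\Phi_\alpha$ is USC on $E\times E$. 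Any maximizing sequence has $\Upsilon(x_n),\Upsilon(y_n)$ uniformly bounded (all other terms in $\Phi_\alpha$ being controlled using boundedness of $u,v$ and non-negativity of $\alpha\Psi$), so it lies in a compact, and upper semicontinuity yields attainment.

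For (a), test $M_\alpha$ against the point $(x_0,x_0)$ from ($\Upsilon$a), where $\Upsilon(x_0)=0$ and $\Psi(x_0,x_0)=0$ by ($\Psi$a). This yields the $\alpha$-independent lower bound $M_\alpha \geq \frac{u(x_0)}{1-\varepsilon} - \frac{v(x_0)}{1+\varepsilon}$. Combined with the obvious upper bound coming from the boundedness of $u,-v$, this forces
\[
\frac{\varepsilon}{1-\varepsilon}\Upsilon(x_{\alpha,\varepsilon}) + \frac{\varepsilon}{1+\varepsilon}\Upsilon(y_{\alpha,\varepsilon}) + \alpha\Psi(x_{\alpha,\varepsilon},y_{\alpha,\varepsilon})
\]
to stay uniformly bounded in $\alpha$, so compactness of sublevel sets of $\Upsilon$ gives (a); as a bonus, $\alpha\mapsto\alpha\Psi(x_{\alpha,\varepsilon},y_{\alpha,\varepsilon})$ is bounded.

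For (b), the map $\alpha\mapsto M_\alpha$ is non-increasing and bounded below, so $M_\alpha\downarrow M_\infty$. If $(z,w)$ is a limit point of $(x_{\alpha,\varepsilon},y_{\alpha,\varepsilon})$ along $\alpha_k\to\infty$ with $\Psi(z,w)>0$, then by continuity of $\Psi$ we would have $\alpha_k\Psi(x_{\alpha_k,\varepsilon},y_{\alpha_k,\varepsilon})\to+\infty$, contradicting the bound just obtained; hence $\Psi(z,w)=0$, i.e.\ $z=w$ by ($\Psi$a). To obtain (c), I rewrite
\[
\alpha\Psi(x_{\alpha,\varepsilon},y_{\alpha,\varepsilon}) = \frac{u(x_{\alpha,\varepsilon})}{1-\varepsilon} - \frac{v(y_{\alpha,\varepsilon})}{1+\varepsilon} - \frac{\varepsilon}{1-\varepsilon}\Upsilon(x_{\alpha,\varepsilon}) - \frac{\varepsilon}{1+\varepsilon}\Upsilon(y_{\alpha,\varepsilon}) - M_\alpha,
\]
and take $\limsup$ along the subsequence: USC of $u$, LSC of $v$ and $\Upsilon$, together with $M_\alpha\downarrow M_\infty$, bound the right-hand side above by $\Phi_{\alpha=0}(z,z) - M_\infty$ (read off formally, i.e.\ without the $\alpha\Psi$ term); comparing with the lower bound $M_\alpha \geq \Phi_\alpha(z,z)$ shows the limsup is non-positive, and $\Psi\geq 0$ closes the loop. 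The supremum identification in (b) drops out of the same chain of inequalities, since equality must hold throughout at the limit point.

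The main obstacle is the bookkeeping in the final step: the $(1\pm\varepsilon)$ scalings of $u,v$ combined with the $\varepsilon$-weighted $\Upsilon$ penalizations must be handled consistently to ensure that the limsup of the displayed expression is exactly non-positive (rather than merely bounded), which is what forces the vanishing of $\alpha\Psi(x_{\alpha,\varepsilon},y_{\alpha,\varepsilon})$ and pins down the limit point as a maximizer. The two roles of $\Upsilon$ — serving as a Lyapunov-style containment via its compact sublevel sets, and contributing a small penalization to the functional — must be disentangled throughout.
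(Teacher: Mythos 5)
Your argument is the standard doubling-of-variables proof that the paper itself does not write out but instead delegates to Proposition 3.7 of \cite{CIL92} and Lemma 9.2 of \cite{FK06}. The existence of maximizers (upper semi-continuity of $\Phi_\alpha$ plus coercivity through the compact sublevel sets of $\Upsilon$), part (a) via the test point $(x_0,x_0)$, the diagonal form of the limit points, and part (c) via monotonicity of $M_\alpha$ and the limsup bookkeeping are all handled correctly and exactly as in those references, so on these points the proposal matches the paper's (cited) proof.

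One caveat concerns the last clause of (b). Your ``equality must hold throughout'' chain establishes that any limit point $z$ maximizes the $\varepsilon$-modified functional $x\mapsto \frac{u(x)}{1-\varepsilon}-\frac{v(x)}{1+\varepsilon}-\frac{2\varepsilon}{1-\varepsilon^2}\Upsilon(x)$, not $u-v$ itself. For fixed $\varepsilon>0$ these maximizers can genuinely differ (e.g.\ when $u-v$ attains its supremum only where $\Upsilon$ is large), so the identity $u(z)-v(z)=\sup_{x}\left\{u(x)-v(x)\right\}$ as literally written in the lemma is not what your argument delivers; it is only recovered after the further limit $\varepsilon\to 0$. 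This is a defect of the lemma's wording rather than of your proof --- the downstream application in Proposition \ref{proposition:comparison_conditions_on_H} uses only the diagonal form of the limit points and part (c) --- but you should state the conclusion your chain of inequalities actually yields rather than asserting the printed one.
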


The following result gives us the explicit condition that can be used to verify the comparison principle.

\begin{proposition} \label{proposition:comparison_conditions_on_H}
	Let $\{\cH_J\}_{J \in \cJ(E)}$ be a generating set of Hamiltonians. Let $\Upsilon$ be a good containment function for $\{\cH_J\}_{J \in \cJ(E)}$ and let $\Psi$ be a good penalization function. Fix $\lambda >0$, $h \in C_b(E)$ and consider $u$ a subsolution to $f - \lambda H_\dagger f = h$ and  $v$ a supersolution to $f - \lambda H_\ddagger f = h$.
	
	For every $\alpha,\varepsilon >0$ let $x_{\alpha,\varepsilon},y_{\alpha,\varepsilon} \in E$ be such that
	\begin{multline} \label{eqn:comparison_principle_proof_choice_of_sequences}
	\frac{u(x_{\alpha,\varepsilon})}{1-\varepsilon} - \frac{v(y_{\alpha,\varepsilon})}{1+\varepsilon} -  \Psi_\alpha(x_{\alpha,\varepsilon},y_{\alpha,\varepsilon}) - \frac{\varepsilon}{1-\varepsilon}\Upsilon(x_{\alpha,\varepsilon}) -\frac{\varepsilon}{1+\varepsilon}\Upsilon(y_{\alpha,\varepsilon}) \\
	= \sup_{x,y \in E} \left\{\frac{u(x)}{1-\varepsilon} - \frac{v(y)}{1+\varepsilon} - \Psi_\alpha(x,y)  - \frac{\varepsilon}{1-\varepsilon}\Upsilon(x) - \frac{\varepsilon}{1+\varepsilon}\Upsilon(y)\right\}.
	\end{multline}
	Suppose that 
	\begin{multline}\label{condH:negative:liminf}
	\liminf_{\varepsilon \rightarrow 0} \liminf_{\alpha \rightarrow \infty} \cH_\dagger\left(x_{\alpha,\varepsilon},\nabla \Psi_\alpha(\cdot,y_{\alpha,\varepsilon})(x_{\alpha,\varepsilon})\right) \\
	- \cH_\ddagger\left(y_{\alpha,\varepsilon},\nabla \Psi_\alpha(\cdot,y_{\alpha,\varepsilon})(x_{\alpha,\varepsilon})\right) \leq 0,
	\end{multline}
	then $u \leq v$. In other words: the comparison principle holds for subsolutions to $f_\lambda H_\dagger f = h$ and supersolutions to $f - \lambda H_\ddagger f = h$.
\end{proposition}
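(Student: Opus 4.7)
The plan is to argue by contradiction: assume $M := \sup_{x \in E}(u(x) - v(x)) > 0$ and derive a contradiction from hypothesis \eqref{condH:negative:liminf}. Isolating first the $x$- and then the $y$-dependence in the supremum identity \eqref{eqn:comparison_principle_proof_choice_of_sequences} shows that $x \mapsto u(x) - f_{\alpha,\varepsilon}(x)$ attains its maximum at $x_{\alpha,\varepsilon}$ and $y \mapsto v(y) - g_{\alpha,\varepsilon}(y)$ attains its minimum at $y_{\alpha,\varepsilon}$, where
\[
f_{\alpha,\varepsilon}(x) := (1-\varepsilon)\Psi_\alpha(x, y_{\alpha,\varepsilon}) + \varepsilon\Upsilon(x), \qquad g_{\alpha,\varepsilon}(y) := -(1+\varepsilon)\Psi_\alpha(x_{\alpha,\varepsilon}, y) - \varepsilon\Upsilon(y).
\]
Feeding these into the sub- and supersolution definitions---the maximisers lie in a relatively compact set by Lemma \ref{lemma:doubling_lemma}(a), so a standard cut-off legitimises the use of non-compactly-supported test functions---produces pointwise Hamiltonian inequalities for $u$ at $x_{\alpha,\varepsilon}$ and for $v$ at $y_{\alpha,\varepsilon}$.

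Set $p_{\alpha,\varepsilon} := \nabla\Psi_\alpha(\cdot, y_{\alpha,\varepsilon})(x_{\alpha,\varepsilon})$; property $(\Psi\text{c})$ then gives $\nabla f_{\alpha,\varepsilon}(x_{\alpha,\varepsilon}) = (1-\varepsilon) p_{\alpha,\varepsilon} + \varepsilon \nabla\Upsilon(x_{\alpha,\varepsilon})$ and $\nabla g_{\alpha,\varepsilon}(y_{\alpha,\varepsilon}) = (1+\varepsilon) p_{\alpha,\varepsilon} - \varepsilon \nabla\Upsilon(y_{\alpha,\varepsilon})$. The next step is to exploit convexity asymmetrically. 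Write $C := \max_J \sup_z \cH_J(z, \nabla\Upsilon(z)) < \infty$, finite by $(\Upsilon\text{d})$. Since $\cH_\dagger(x, \cdot)$ is a maximum of convex functions and hence convex, convexity together with the bound $C$ yields
\[
\cH_\dagger\bigl(x_{\alpha,\varepsilon}, (1-\varepsilon) p_{\alpha,\varepsilon} + \varepsilon \nabla\Upsilon(x_{\alpha,\varepsilon})\bigr) \leq (1-\varepsilon)\cH_\dagger(x_{\alpha,\varepsilon}, p_{\alpha,\varepsilon}) + \varepsilon C.
\]
The function $\cH_\ddagger$ is a minimum of convex functions and generally \emph{not} convex in $p$, so the matching lower bound requires the Jensen-type identity $p_{\alpha,\varepsilon} = \tfrac{1}{1+\varepsilon}\bigl[(1+\varepsilon) p_{\alpha,\varepsilon} - \varepsilon \nabla\Upsilon(y_{\alpha,\varepsilon})\bigr] + \tfrac{\varepsilon}{1+\varepsilon}\nabla\Upsilon(y_{\alpha,\varepsilon})$ applied to each $\cH_J$ individually, followed by taking the minimum over $J$ and absorbing the $\nabla\Upsilon$-terms uniformly using $C$, to obtain
\[
\cH_\ddagger\bigl(y_{\alpha,\varepsilon}, (1+\varepsilon) p_{\alpha,\varepsilon} - \varepsilon \nabla\Upsilon(y_{\alpha,\varepsilon})\bigr) \geq (1+\varepsilon)\cH_\ddagger(y_{\alpha,\varepsilon}, p_{\alpha,\varepsilon}) - \varepsilon C.
\]

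Dividing the two Hamiltonian inequalities by $1-\varepsilon$ and $1+\varepsilon$ respectively and subtracting leads to
\[
\frac{u(x_{\alpha,\varepsilon})}{1-\varepsilon} - \frac{v(y_{\alpha,\varepsilon})}{1+\varepsilon} \leq \lambda\bigl[\cH_\dagger(x_{\alpha,\varepsilon}, p_{\alpha,\varepsilon}) - \cH_\ddagger(y_{\alpha,\varepsilon}, p_{\alpha,\varepsilon})\bigr] + \tfrac{2\lambda\varepsilon C}{1-\varepsilon^2} + \tfrac{h(x_{\alpha,\varepsilon})}{1-\varepsilon} - \tfrac{h(y_{\alpha,\varepsilon})}{1+\varepsilon}.
\]
For the lower bound on the left-hand side, I would use that by \eqref{eqn:comparison_principle_proof_choice_of_sequences} and non-negativity of $\Psi_\alpha$ and $\Upsilon$, the left-hand side is at least $\sup_{x,y}\{\cdots\}$, which in turn is bounded below by its value at $(z^\delta, z^\delta)$ for any near-maximiser $z^\delta \in E$ with $u(z^\delta) - v(z^\delta) \geq M - \delta$; taking $\liminf_\alpha$ then $\liminf_\varepsilon$ and letting $\delta \downarrow 0$ shows the $\liminf_\varepsilon \liminf_\alpha$ of the left-hand side is at least $M$. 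On the right-hand side, Lemma \ref{lemma:doubling_lemma}(b) together with continuity of $h$ drives the $h$-differences to zero as $\alpha \to \infty$ and the $O(\varepsilon)$-term to zero as $\varepsilon \to 0$, whereas hypothesis \eqref{condH:negative:liminf} forces the $\liminf$ of the Hamiltonian difference into the non-positive half-line. Combining these yields $M \leq 0$, contradicting $M > 0$ and establishing $u \leq v$.

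The main technical obstacle is the asymmetric convexity treatment: the upper bound on $\cH_\dagger$ is a routine application of convexity, but $\cH_\ddagger$ is a minimum of convex functions and generically non-convex in $p$, so the matching lower bound requires rewriting $p_{\alpha,\varepsilon}$ as a convex combination before taking the minimum over $J$, with $(\Upsilon\text{d})$ playing the essential role of bounding the boundary terms uniformly in $J$. A secondary technical point is the use of the non-compactly-supported $f_{\alpha,\varepsilon}, g_{\alpha,\varepsilon}$ as test functions, which is handled by a cut-off outside the compact set containing the maximisers (Lemma \ref{lemma:doubling_lemma}(a)).
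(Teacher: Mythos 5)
Your proposal is correct and follows essentially the same route as the paper's proof in Appendix \ref{appendix:abstract_comparison_verification}: doubling of variables with the extended test functions, the asymmetric convexity argument (direct convexity for $\cH_\dagger$, per-$J$ convexity plus the elementary min-bound of Lemma \ref{lemma:elementary_min_bound} for $\cH_\ddagger$, with $(\Upsilon\mathrm{d})$ supplying the uniform constant), and the final passage to the double $\liminf$. The only differences are cosmetic: you argue by contradiction where the paper estimates $\sup_x(u(x)-v(x))$ directly, and your near-maximiser $z^\delta$ makes the lower bound on the left-hand side slightly more explicit than the paper's first chain of inequalities.
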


We will establish \eqref{condH:negative:liminf} in Sections \ref{section:comparison_1d} and \ref{section:comparison_multi_d} below for a collection of Hamiltonians that appear in our applications.

\smallskip

The next lemma aids the verification of \eqref{condH:negative:liminf} by giving control on the sequences $(x_{\alpha,\varepsilon},y_{\alpha,\varepsilon})$. The result is an adaptation of Lemma 9.3 in \cite{FK06}. For a slightly less involved variant, see Lemma 5 in \cite{Kr16b}.

\begin{lemma} \label{lemma:control_on_H}
	Let $\{\cH_J\}_{J \in \cJ(E)}$ be a generating set of Hamiltonians. Let $h \in C_b(E)$ and $\lambda > 0$ and let $v$ be a supersolution to $f - \lambda_\ddagger H = h$. Let $\Psi$ be a good penalization function and $\Upsilon$ be a good containment function for $\{\cH_J\}_{J \in \cJ(E)}$. Moreover, for every $\alpha,\varepsilon >0$ let $x_{\alpha,\varepsilon},y_{\alpha,\varepsilon} \in E$ be as in \eqref{eqn:comparison_principle_proof_choice_of_sequences}. Then we have that
	\begin{equation} 
	\sup_{\varepsilon, \alpha} \cH_\ddagger\left(y_{\alpha,\varepsilon},\alpha (\nabla \Psi(\cdot,y_{\alpha,\varepsilon}))(x_{\alpha,\varepsilon})\right) < \infty. \label{eqn:control_on_H_sup} 
	\end{equation}
\end{lemma}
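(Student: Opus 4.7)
The plan is to combine the optimality of $(x_{\alpha,\varepsilon},y_{\alpha,\varepsilon})$ from Lemma \ref{lemma:doubling_lemma} with the supersolution property of $v$, and then use convexity of the $\cH_J$ together with the containment estimate ($\Upsilon$d) to transfer the resulting bound to the desired momentum.

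First, I would read the optimality \eqref{eqn:comparison_principle_proof_choice_of_sequences} with $x$ fixed equal to $x_{\alpha,\varepsilon}$ and multiply through by $(1+\varepsilon)$: the function $y \mapsto v(y) + (1+\varepsilon)\alpha \Psi(x_{\alpha,\varepsilon},y) + \varepsilon \Upsilon(y)$ then attains its infimum over $E$ at $y_{\alpha,\varepsilon}$. Setting
\begin{equation*}
\phi_{\alpha,\varepsilon}(y) := -(1+\varepsilon)\alpha \Psi(x_{\alpha,\varepsilon},y) - \varepsilon \Upsilon(y),
\end{equation*}
we see that $v - \phi_{\alpha,\varepsilon}$ attains its infimum at $y_{\alpha,\varepsilon}$. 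Because $\phi_{\alpha,\varepsilon} \in C^2(E)$ but is not compactly supported, the first technical step is to replace $\phi_{\alpha,\varepsilon}$ by a cut-off $f_{\alpha,\varepsilon}\in C_c^2(E)$ that coincides with $\phi_{\alpha,\varepsilon}$ on a compact neighbourhood of $y_{\alpha,\varepsilon}$ and such that $v - f_{\alpha,\varepsilon}$ admits a minimising sequence tending to $y_{\alpha,\varepsilon}$. Here Lemma \ref{lemma:doubling_lemma}(a) is crucial since it places $y_{\alpha,\varepsilon}$ in a fixed compact subset of $E$, and the coercivity of $\Upsilon$ from ($\Upsilon$c) guarantees that $v - \phi_{\alpha,\varepsilon}$ exceeds its value at $y_{\alpha,\varepsilon}$ by a definite margin outside a sufficiently large compact set; this cut-off procedure, carried out as in the proof of Lemma 9.3 in \cite{FK06}, is where I expect the main technical effort.

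With $f_{\alpha,\varepsilon}\in C_c^2(E)$ in hand, I would apply the supersolution property of $v$ at the constant sequence $y_n \equiv y_{\alpha,\varepsilon}$ and use the symmetry identity $(\nabla \Psi(\cdot,y))(x) = -(\nabla \Psi(x,\cdot))(y)$ from ($\Psi$c) to compute the gradient at $y_{\alpha,\varepsilon}$. This yields
\begin{equation*}
\cH_\ddagger\bigl(y_{\alpha,\varepsilon},\,(1+\varepsilon)\alpha\, q_{\alpha,\varepsilon} - \varepsilon\, n_{\alpha,\varepsilon}\bigr) \leq \frac{\|v\|_\infty + \|h\|_\infty}{\lambda},
\end{equation*}
where I abbreviate $q_{\alpha,\varepsilon} := (\nabla \Psi(\cdot,y_{\alpha,\varepsilon}))(x_{\alpha,\varepsilon})$ and $n_{\alpha,\varepsilon} := \nabla \Upsilon(y_{\alpha,\varepsilon})$.

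Finally, to transfer this estimate to the targeted quantity $\cH_\ddagger(y_{\alpha,\varepsilon}, \alpha\, q_{\alpha,\varepsilon})$ I would exploit the convexity of each $\cH_J(y,\cdot)$. The identity
\begin{equation*}
\alpha\, q_{\alpha,\varepsilon} = \tfrac{1}{1+\varepsilon}\bigl((1+\varepsilon)\alpha\, q_{\alpha,\varepsilon} - \varepsilon\, n_{\alpha,\varepsilon}\bigr) + \tfrac{\varepsilon}{1+\varepsilon}\, n_{\alpha,\varepsilon}
\end{equation*}
combined with convexity gives, for every $J \in \cJ$ with $y_{\alpha,\varepsilon}\in E_J$,
\begin{equation*}
\cH_J(y_{\alpha,\varepsilon}, \alpha\, q_{\alpha,\varepsilon}) \leq \tfrac{1}{1+\varepsilon}\cH_J\bigl(y_{\alpha,\varepsilon}, (1+\varepsilon)\alpha\, q_{\alpha,\varepsilon} - \varepsilon\, n_{\alpha,\varepsilon}\bigr) + \tfrac{\varepsilon}{1+\varepsilon}\cH_J(y_{\alpha,\varepsilon}, n_{\alpha,\varepsilon}).
\end{equation*}
Choosing the index $J$ that minimises the first term on the right and bounding the second term uniformly by $M := \max_{J'\in\cJ(E)}\sup_{z\in E_{J'}} \cH_{J'}(z,\nabla \Upsilon(z)) < \infty$ from ($\Upsilon$d) produces
\begin{equation*}
\cH_\ddagger(y_{\alpha,\varepsilon}, \alpha\, q_{\alpha,\varepsilon}) \leq \frac{\|v\|_\infty + \|h\|_\infty}{\lambda} + M,
\end{equation*}
which is finite and independent of $\alpha,\varepsilon$, establishing \eqref{eqn:control_on_H_sup}.
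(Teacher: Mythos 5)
Your proposal is correct and follows essentially the same route as the paper: apply the supersolution property with the test function $y \mapsto -(1+\varepsilon)\alpha\Psi(x_{\alpha,\varepsilon},y)-\varepsilon\Upsilon(y)$ to bound $\cH_\ddagger$ at the shifted momentum $(1+\varepsilon)\alpha q_{\alpha,\varepsilon}-\varepsilon n_{\alpha,\varepsilon}$, then use convexity of the $\cH_J$ and the uniform bound ($\Upsilon$d) to transfer the estimate to $\alpha q_{\alpha,\varepsilon}$. The only presentational difference is that you carry out the cut-off/domain-extension step by hand (following Lemma 9.3 of \cite{FK06}), whereas the paper delegates it to the super-extension of $H_\ddagger$ from Lemma A.8 of \cite{CoKr17}.
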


\subsection{The comparison principle for one dimensional systems} \label{section:comparison_1d}

We proceed by applying the general result for the verification of the comparison principle in the one-dimensional setting. For the verification, we need additional assumptions on the generating Hamiltonians. We start by interpreting Definitions \ref{definition:base_space}, \ref{definition:tangent_cones} and \ref{definition:generating_Hamiltonians} in this one-dimensional setting.

\smallskip

Consider $E \subseteq \bR$ be an interval and let $\partial_-, \partial_+$ be the boundaries of $E$ in the set $\bR\cup \{- \infty, \infty\}$. We will consider the setting that $E$ is open, half open or closed. Clearly, $E$ is of the form as in Definitions \ref{definition:base_space} and \ref{definition:tangent_cones}. We will use $-,+$ as indices instead of $1,\dots,k$ for the sets that generate $E$. 

In particular, if $\partial_- \in E$, then $E_- := \{\partial_-\}$ and $\Gamma_{\{-\}} = [0,\infty)$ and if $\partial_+ \in E$, then $E_+ := \{\partial_+\}$ and $\Gamma_{\{+\}} = (- \infty,0]$.

In this setting, a collection of generating Hamiltonians takes a particularly simple form, there is a base Hamiltonian $\cH_\emptyset$, as well as an Hamiltonian for each boundary point $z \in E \cap \partial E$. I.e. we have a generating set of Hamiltonians $\{\cH_\emptyset\} \cup \{\cH_z\}_{z \in \{-,+\}}$. Finally, condition (c) of Definition \ref{definition:generating_Hamiltonians} translates into
\begin{align*}
\partial_- & \in E \cap \partial E &  \Longrightarrow &&  &  \partial_p \cH_-(\partial_-,p) \geq 0, \\
\partial_+ & \in E \cap \partial E &  \Longrightarrow && & \partial_p \cH_+(\partial_+,p) \leq 0.
\end{align*}

To prove the comparison principle, we will use the following conditions on the Hamiltonian.

\begin{condition} \label{condition:basic}
	Let $\{\cH_\emptyset\} \cup \{\cH_z\}_{z \in \{-,+\}}$ be a generating set of Hamiltonians.
	\begin{description}
		\item[Behaviour in the interior] 
		For each direction $d \in \{-,+\}$ and all compact $K \subseteq E^\circ$ we have either (1) or (2)
		\begin{enumerate}[(1)]
			\item
			\begin{equation*}
			\lim_{p \rightarrow d \infty} \inf_{x \in K}  \cH_\emptyset(x,p) = \infty;
			\end{equation*}
			\item there is a continuous function $h^d : K \rightarrow \bR$ such that
			\begin{equation*}
			\lim_{p \rightarrow d\infty} \sup_{x \in K} \left|\cH_\emptyset(x,p) - h^d(x)\right| = 0.
			\end{equation*}
		\end{enumerate}
		\item[Relating the boundary to the interior]
		For each boundary $\partial_s \in E \cap \partial E$ it holds
		\begin{align*}
		-sp & \geq 0 & \Longrightarrow && \cH_{s}(\partial_s,p) & \geq  \cH_\emptyset(\partial_s,p); \\
		-sp & \leq 0 & \Longrightarrow && \cH_{s}(\partial_s,p) & \leq  \cH_\emptyset(\partial_s,p). 
		\end{align*}
	\end{description}
\end{condition}

The main step when verifying the comparison principle for points at the boundary is bounding the Hamiltonians $\cH_\dagger$ and $\cH_\ddagger$ by $\cH_\emptyset$. This step allows us to proceed as if we were in the interior. We thus pose additional conditions for the behaviour of $H_\emptyset$ close to the boundary. We distinguish between a weak and a strong condition. The weak condition is sufficient for the verification of the comparison principle, whereas the strong condition plays an important role when we take sums of Hamiltonians, see Lemma \ref{lemma:additive_structure_conditions_one_d_hamiltonians} below.

\begin{condition}[Conditions for the boundary] \label{condition:boundary}
	Let $\{\cH_\emptyset\} \cup \{\cH_z\}_{z \in \{-,+\}}$ be a generating set of Hamiltonians. For each boundary $s \in \{-,+\}$ such that $\partial_s \in \bR \cap E$ and each direction $d \in \{-,+\}$  either the weak or the strong condition holds: 
	\begin{description}
		\item[Strong condition] There is a closed interval of positive length $K \subseteq E$ and $\partial_s \in K$ such that
		\begin{equation*}
		\lim_{p \rightarrow d \infty} \inf_{x \in K}  \cH_\emptyset(x,p) = \infty.
		\end{equation*}
		\item[Weak condition] 
		We have either (1) or (2):
		\begin{enumerate}[(1)]
			\item There is a closed interval of positive length $K \subseteq E$ and $\partial_s \in K$ and a continuous function $h_{s}^{d}$ on $K$ such that
			\begin{equation*}
			\lim_{p \rightarrow d \infty} \sup_{x \in K} \left|\cH_\emptyset(x,p) - h_{s}^{d}(x) \right| = 0.
			\end{equation*}
			\item For $x_\alpha,y_\alpha \rightarrow \partial_s$ and $p_\alpha = \alpha(x_\alpha-y_\alpha) \rightarrow d \infty$, we have
			\begin{equation*}
			\liminf_{\alpha \rightarrow \infty} \cH_\emptyset(x_\alpha,p_\alpha) - \cH_\emptyset(y_\alpha,p_\alpha) \leq 0.
			\end{equation*}
		\end{enumerate}
	\end{description}
\end{condition}

\begin{remark}
	Note that weak condition (1) implies weak condition (2), but is often easier to check. For the Hamiltonian $\cH(x,p) = x\left[e^{-p} - 1\right]$ on the space $[0,\infty)$ and $s = -, d = -$, condition (2) holds as
	\begin{equation*}
	\cH_\emptyset(x_\alpha,\alpha(x_\alpha - y_\alpha)) - \cH_\emptyset(y_\alpha,\alpha(x_\alpha - y_\alpha)) = \left[x_\alpha - y_\alpha \right]\left[e^{-\alpha(x_\alpha - y_\alpha)} - 1\right] \leq 0
	\end{equation*}
	due to the anti-symmetry of the occurrence of the difference $x_\alpha - y_\alpha$. The strong condition, nor the weak condition (1) holds.
\end{remark}

\begin{remark}
	Consider the Hamiltonian corresponding to the Yule process: $\cH(x,p) = x\left[e^{p} - 1\right]$ on the space $[0,\infty)$. It follows that Condition \ref{condition:boundary} fails for $d = +$, corresponding with our discussion in Section \ref{subsection:failure_of_LDP}
\end{remark}

\begin{theorem} \label{theorem:comparison_principle_1d}
	Let $\bH := \{\cH_\emptyset\} \cup \{\cH_z\}_{z \in \{-,+\}}$ be a generating set of Hamiltonians. Suppose that $\bH$ has a good containment function $\Upsilon$ and satisfies Conditions \ref{condition:basic} and \ref{condition:boundary}. Let $h \in C_b(E)$ and $\lambda > 0$. 
	
	Then the comparison principle holds for subsolutions to $f - \lambda H_\dagger f = h$ and supersolutions to $f - \lambda H_\ddagger f = h$.
\end{theorem}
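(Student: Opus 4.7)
The plan is to apply Proposition \ref{proposition:comparison_conditions_on_H} with the quadratic penalization $\Psi(x,y) = \tfrac{1}{2}(x-y)^2$, so that $(\nabla \Psi(\cdot,y))(x) = x-y$ and the relevant momentum becomes $p_\alpha := \alpha(x_{\alpha,\varepsilon} - y_{\alpha,\varepsilon})$. The whole proof then reduces to verifying the asymptotic inequality \eqref{condH:negative:liminf} for the sequences $(x_{\alpha,\varepsilon}, y_{\alpha,\varepsilon})$ chosen in \eqref{eqn:comparison_principle_proof_choice_of_sequences}. Lemma \ref{lemma:doubling_lemma} gives relative compactness with every limit point of the form $(z,z)$, and Lemma \ref{lemma:control_on_H} provides the crucial a priori bound $\sup_{\alpha,\varepsilon} \cH_\ddagger(y_{\alpha,\varepsilon}, p_\alpha) < \infty$.

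The conceptual core of the argument is a reduction showing that along convergent subsequences both $\cH_\dagger(x_\alpha,p_\alpha)$ and $\cH_\ddagger(y_\alpha,p_\alpha)$ collapse to $\cH_\emptyset$ evaluated at the respective points. If $z \in E^\circ$ this is immediate, as $\cJ(x) = \{\emptyset\}$ for interior $x$. If instead $z = \partial_s \in E \cap \partial E$, I would argue by cases on whether $x_\alpha$ or $y_\alpha$ equals $\partial_s$: the decisive observation is that the sign of $p_\alpha = \alpha(x_\alpha - y_\alpha)$ is forced by the geometric position of the boundary point. Concretely, if $x_\alpha = \partial_s$ then $y_\alpha$ lies on the interior side, forcing $-sp_\alpha \leq 0$, and Condition \ref{condition:basic} then yields $\cH_s(\partial_s,p_\alpha) \leq \cH_\emptyset(\partial_s,p_\alpha)$, so the maximum defining $\cH_\dagger$ collapses onto $\cH_\emptyset$. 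A symmetric analysis on the min side handles $y_\alpha = \partial_s$. The a priori bound from Lemma \ref{lemma:control_on_H} therefore transfers into a bound on $\cH_\emptyset(y_\alpha,p_\alpha)$.

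It then remains to check
\[
\liminf_{\varepsilon \to 0} \liminf_{\alpha \to \infty} \left[\cH_\emptyset(x_\alpha,p_\alpha) - \cH_\emptyset(y_\alpha,p_\alpha)\right] \leq 0.
\]
If $p_\alpha$ stays bounded, continuity of $\cH_\emptyset$ together with $x_\alpha - y_\alpha \to 0$ closes the estimate. Otherwise, pass to a subsequence with $p_\alpha \to d\infty$ for some direction $d \in \{-,+\}$, and distinguish interior and boundary limits. For $z \in E^\circ$, the interior clause of Condition \ref{condition:basic} applies on a compact neighbourhood $K \subseteq E^\circ$: alternative (1) contradicts the bound on $\cH_\emptyset(y_\alpha,p_\alpha)$, so alternative (2) must hold and uniform convergence $\cH_\emptyset(\cdot,p) \to h^d(\cdot)$ on $K$ forces the difference to vanish. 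For $z = \partial_s$, Condition \ref{condition:boundary} plays the analogous role: the strong variant is again ruled out by the bound, the weak variant (1) produces a uniform limit $h_s^d$ on a compact $K \ni \partial_s$ and hence a vanishing difference, and the weak variant (2) delivers the desired $\liminf$ inequality directly by definition.

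The hard part is precisely the boundary case, where $\cH_\dagger$ and $\cH_\ddagger$ a priori involve the auxiliary Hamiltonian $\cH_s$ that may behave very differently from $\cH_\emptyset$. The resolution is the sign-matching observation above: $\cH_s$ differs from $\cH_\emptyset$ only for momenta pointing outward, while the doubling-variable mechanism selects momenta pointing inward from the boundary point, so the apparent discontinuity never interacts with the comparison estimate. This is consistent with the discussion in Section \ref{subsection:failure_of_LDP}: when Condition \ref{condition:boundary} fails, there is nothing left to close this final step and the comparison genuinely breaks down.
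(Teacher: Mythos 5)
Your proposal is correct and follows essentially the same route as the paper's proof: Proposition \ref{proposition:comparison_conditions_on_H} with the quadratic penalization, Lemmas \ref{lemma:doubling_lemma} and \ref{lemma:control_on_H}, the sign-forced collapse of $\cH_\dagger$ and $\cH_\ddagger$ onto $\cH_\emptyset$ at the boundary via the second part of Condition \ref{condition:basic}, and then the bounded/unbounded-momentum dichotomy resolved by Conditions \ref{condition:basic} and \ref{condition:boundary}. The only cosmetic difference is that the paper organizes the boundary case by the sign of $p_\alpha$ while you organize it by which of $x_\alpha, y_\alpha$ sits on the boundary; the two case analyses are equivalent.
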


\begin{proof}[Proof of Theorem \ref{theorem:comparison_principle_1d}]
	We verify the conditions for Proposition \ref{proposition:comparison_conditions_on_H} using the penalization function $\Psi(x,y) = \frac{1}{2} (x-y)^2$.
	
	Thus, for $\alpha,\varepsilon >0$ let $x_{\alpha,\varepsilon},y_{\alpha,\varepsilon} \in E$ be such that
	\begin{multline*} 
	\frac{u(x_{\alpha,\varepsilon})}{1-\varepsilon} - \frac{v(y_{\alpha,\varepsilon})}{1+\varepsilon} -  \Psi_\alpha(x_{\alpha,\varepsilon},y_{\alpha,\varepsilon}) - \frac{\varepsilon}{1-\varepsilon}\Upsilon(x_{\alpha,\varepsilon}) -\frac{\varepsilon}{1+\varepsilon}\Upsilon(y_{\alpha,\varepsilon}) \\
	= \sup_{x,y \in E} \left\{\frac{u(x)}{1-\varepsilon} - \frac{v(y)}{1+\varepsilon} - \Psi_\alpha(x,y)  - \frac{\varepsilon}{1-\varepsilon}\Upsilon(x) - \frac{\varepsilon}{1+\varepsilon}\Upsilon(y)\right\}.
	\end{multline*}
	We fix $\varepsilon > 0$ and prove that
	\begin{equation}\label{condH:negative:liminf_explicit}
	\liminf_{\alpha \rightarrow \infty} \cH_\dagger\left(x_{\alpha,\varepsilon},\alpha\nabla \Psi(\cdot,y_{\alpha,\varepsilon})(x_{\alpha,\varepsilon})\right) - \cH_\ddagger\left(y_{\alpha,\varepsilon},\alpha\nabla \Psi(\cdot,y_{\alpha,\varepsilon})(x_{\alpha,\varepsilon})\right) \leq 0.
	\end{equation}
	This implies that \eqref{condH:negative:liminf} is satisfied and the comparison principle holds.
	
	\smallskip
	
	From this point onward, we drop $\varepsilon > 0$ from our notation. In addition, we write $p_\alpha := \alpha\nabla \Psi(\cdot,y_{\alpha})(x_{\alpha}) = \alpha(x_\alpha - y_\alpha)$ to shorten formula's below. By Lemma \ref{lemma:doubling_lemma}, the sequences $\{x_\alpha,y_\alpha\}_{\alpha >0}$ are contained in some compact set and all limit points are of the form $(z,z)$. Thus, without loss of generality, we can assume that $(x_\alpha,y_\alpha) \rightarrow (z,z)$. The proof proceeds depending on whether $z$ is in the interior or is a boundary point. 
	
	\smallskip
	
	\textit{Step 1:} Suppose that $z \in E^\circ$. Then there is some compact set $K \subseteq E^\circ$ with $z \in K$.

	Without loss of generality, we can assume that $(x_\alpha,y_\alpha)$ contains a subsequence such that $p_\alpha \geq 0$. We will denote the subsequence also by $\alpha$. We argue using Condition \ref{condition:basic} holds for $d = +$ (In the case we have a subsequence with $p_\alpha \leq 0$, we would argue with $d = -$). 
	
	Suppose that Condition \ref{condition:basic} (1) holds. Then it follows by Lemma \ref{lemma:control_on_H} that $\sup_{\alpha} |p_\alpha| < \infty$. In this case, we can extract a converging subsequence with index $\hat{\alpha}$, which implies, using the continuity of $\cH_\emptyset$ that 
	\begin{equation*}
	\lim_{\hat{\alpha} \rightarrow \infty} \cH_\dagger\left(x_{\hat{\alpha}},p_{\hat{\alpha}}\right) -  \cH_\dagger\left(y_{\hat{\alpha}},p_{\hat{\alpha}}\right) = \lim_{\hat{\alpha} \rightarrow \infty} \cH_\emptyset\left(x_{\hat{\alpha}},p_{\hat{\alpha}}\right) -  \cH_\emptyset\left(y_{\hat{\alpha}},p_{\hat{\alpha}}\right)  = 0
	\end{equation*}
	implying \eqref{condH:negative:liminf_explicit}.
	
	Suppose that Condition \ref{condition:basic} (2) holds. If as above, the sequence $p_\alpha$ contains a bounded subsequence, then \eqref{condH:negative:liminf_explicit} holds. If, however, $p_\alpha \rightarrow \infty$, then
	\begin{equation*}
	\lim_{\hat{\alpha} \rightarrow \infty} \cH_\dagger\left(x_{\hat{\alpha}},p_{\hat{\alpha}}\right) -  \cH_\dagger\left(y_{\hat{\alpha}},p_{\hat{\alpha}}\right) = \lim_{\hat{\alpha} \rightarrow \infty} \cH_\emptyset\left(x_{\hat{\alpha}},p_{\hat{\alpha}}\right) -  \cH_\emptyset\left(y_{\hat{\alpha}},p_{\hat{\alpha}}\right)  = h^+(z) - h^+(z) = 0
	\end{equation*}
	implying \eqref{condH:negative:liminf_explicit}.
	
	\textit{Step 2:} Suppose that $z = \partial_-$. The case of $z = \partial_+$ is similar. We show that we can argue using the operator $\cH_\emptyset$ instead of $\cH_\dagger$ and $\cH_\ddagger$. By Condition \ref{condition:basic}, we find that $p_\alpha \geq 0$ implies $\cH_{-}(\partial_-,p_\alpha) \geq \cH_\emptyset(\partial_-,p_\alpha)$, so that $\cH_\ddagger(y_\alpha,p_\alpha) = \cH_\emptyset(y_\alpha,p_\alpha)$. In addition, if $p_\alpha = 0$, then $\cH_\dagger(x_\alpha,p_\alpha) = \cH_\emptyset(x_\alpha,p_\alpha)$, whereas if $p_\alpha > 0$, then $x_\alpha > y_\alpha \geq \partial_-$, so that also $\cH_\dagger(x_\alpha,p_\alpha) = \cH_\emptyset(x_\alpha,p_\alpha)$. Similar arguments treat the case $p_\alpha \leq 0$. We conclude that for all $\alpha > 0$ we have 
	\begin{equation*}
	\cH_\dagger(x_\alpha,p_\alpha) = H_\emptyset(x_\alpha,p_\alpha), \qquad \cH_\ddagger(y_\alpha,p_\alpha) = \cH_\emptyset(y_\alpha,p_\alpha).
	\end{equation*}
	This implies we can argue by using $\cH_\emptyset$ instead of $\cH_\dagger,\cH_\ddagger$, which is analogous as to the proof in step 1.
\end{proof}

To conclude this section on the comparison principle for one-dimensional systems, we show that satisfying Conditions \ref{condition:basic} and \ref{condition:boundary} is stable under the formation of finite sums. This makes checking the conditions easy: if it holds for easily identifiable building blocks of a Hamiltonian, then it holds for the sum of these Hamiltonians.

\begin{definition} \label{definition:addition_of_generating_sets_of_Hamiltonians}
	Let $\bH_i := \{\cH_{J,i}\}_{J \in \cJ(E)}$, $i \in \{1,\dots,k\}$ be generating sets of Hamiltonians. We define $\bH := \sum_{i=1}^k \bH_i$ to be the set of Hamiltonians $\{\cH_J\}_{J \in \cJ(E)}$, where $\cH_J = \sum_{i=1}^k \cH_{J,k}$. Note that $\bH$ is also a generating set of Hamiltonians.
\end{definition}

\begin{lemma} \label{lemma:additive_structure_conditions_one_d_hamiltonians}
	Let $\bH_i := \{\cH_{\emptyset,i}\} \cup \{\cH_{z,i}\}_{z \in \{-,+\}}$, $i \in \{1,\dots,k\}$ be a generating sets of Hamiltonians that satisfy Condition \ref{condition:basic}. Suppose that for each $s \in \{-,+\}$ with $\partial_s \in E \cap \partial E$ we have (a) or (b):
	\begin{enumerate}[(a)]
		\item $\cH_{\emptyset,1},\dots,\cH_{\emptyset,k}$ satisfy Condition \ref{condition:boundary}.
		\item There is $i \in \{1,\dots,k\}$ such that $\cH_{\emptyset,i}$ satisfies strong Condition \ref{condition:boundary}.
	\end{enumerate}
	Then $\bH := \bH_1 + \dots + \bH_k$ satisfies Conditions \ref{condition:basic} and \ref{condition:boundary}.
\end{lemma}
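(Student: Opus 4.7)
The proof is an additive verification: each of Condition \ref{condition:basic} and Condition \ref{condition:boundary} is preserved under summation of the $\bH_i$, with only a small case analysis needed for the latter.

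I first verify Condition \ref{condition:basic} for $\bH$. Fix a direction $d\in\{-,+\}$ and a compact $K\subseteq E^\circ$. Each summand $\cH_{\emptyset,i}$ satisfies alternative (1) or (2) on $K$. If all are of type (2), the uniform convergences $\cH_{\emptyset,i}(\cdot,p)\to h_i^d$ add up to give $\cH_\emptyset(\cdot,p)\to\sum_i h_i^d$ uniformly on $K$, which is (2) for $\bH$. If at least one summand is of type (1), it tends to $+\infty$ uniformly on $K$ while every type-(2) summand stays uniformly bounded on $K\times\{p\to d\infty\}$, so (1) holds for $\bH$. The termwise boundary/interior inequalities $\cH_{s,i}(\partial_s,p)\gtreqless\cH_{\emptyset,i}(\partial_s,p)$ (according to the sign of $-sp$) are preserved under summation, which gives the boundary/interior relation for $\bH$.

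For Condition \ref{condition:boundary} at a boundary $\partial_s$, I first treat case (b). Let $i_0$ realize the strong condition on an interval $K_{i_0}\ni\partial_s$. The main task is to show that each remaining $\cH_{\emptyset,j}$ is uniformly bounded below on $K\times\{p\to d\infty\}$ for some interval $K\subseteq K_{i_0}$ still containing $\partial_s$. This combines convexity of $\cH_{\emptyset,j}(x,\cdot)$ in $p$, Condition \ref{condition:basic} applied to compact subsets of $K_{i_0}\setminus\{\partial_s\}$ (which forces the limit of $\cH_{\emptyset,j}(\cdot,p)$ as $p\to d\infty$ to be locally uniform in $x$ on the interior, either $+\infty$ or a continuous function $h^d_j$), and continuity of $\cH_{\emptyset,j}$ up to $\partial_s$. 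Once this lower bound is in place, the strong behaviour of $\cH_{\emptyset,i_0}$ forces $\cH_\emptyset(\cdot,p)\to+\infty$ uniformly on $K$, establishing the strong condition for $\bH$.

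In case (a), I further split by the alternative each $\cH_{\emptyset,i}$ satisfies. If some summand is strong, we fall back to case (b). If every summand satisfies weak (1) on intervals $K_i$, then $K:=\bigcap_i K_i$ is a closed interval of positive length containing $\partial_s$ and the uniform convergences sum to weak (1) for $\bH$ with limit $\sum_i h_{s,i}^d$. The remaining mixed situation has some summands satisfying only weak (2); the weak (1) summands contribute uniformly convergent pieces that combine harmlessly, so one is reduced to showing that a finite sum of Hamiltonians satisfying weak (2) satisfies weak (2). This is the main obstacle: weak (2) is only a $\liminf$ inequality, and a $\liminf$ of a sum need not split. The plan is to exploit that, in the convex-in-$p$ setting at hand, weak (2) typically reflects a pointwise sign of $\cH_{\emptyset,i}(x_\alpha,p_\alpha)-\cH_{\emptyset,i}(y_\alpha,p_\alpha)$ produced by the anti-symmetric structure of $x_\alpha-y_\alpha$ appearing in $p_\alpha=\alpha(x_\alpha-y_\alpha)$, as illustrated by the remark following Condition \ref{condition:boundary}; pointwise inequalities do add and pass to the $\liminf$, which closes the argument.
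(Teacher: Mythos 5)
The paper's own ``proof'' is the single line ``All claims are immediate,'' so any detailed argument differs from it; the real question is whether your argument closes, and at two places it does not. First, your treatment of the sum of weak-(2) summands. You correctly flag that a $\liminf$ of a sum need not split, but your proposed fix --- that weak (2) ``typically reflects a pointwise sign'' of $\cH_{\emptyset,i}(x_\alpha,p_\alpha)-\cH_{\emptyset,i}(y_\alpha,p_\alpha)$ --- invokes structure that is not in the hypothesis: Condition \ref{condition:boundary}(2) is only a $\liminf$ statement, and you cannot upgrade it to a pointwise inequality. The correct repair is a subsequence argument, which works precisely because the condition is quantified over \emph{all} admissible sequences and a subsequence of an admissible sequence $(x_\alpha,y_\alpha,p_\alpha=\alpha(x_\alpha-y_\alpha))$ is again admissible: pass first to a subsequence realizing $\liminf_\alpha\bigl(\cH_{\emptyset,1}(x_\alpha,p_\alpha)-\cH_{\emptyset,1}(y_\alpha,p_\alpha)\bigr)\leq 0$, then apply weak (2) for $\cH_{\emptyset,2}$ \emph{along that subsequence} and extract a further subsequence; along the final subsequence both differences converge to nonpositive limits, so the $\liminf$ of the sum is $\leq 0$. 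Iterating over the finitely many summands (and using that weak-(1) summands contribute differences tending to $0$, since $x_\alpha,y_\alpha\to\partial_s$) closes case (a) cleanly.

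Second, in case (b) your ``main task'' --- that every remaining $\cH_{\emptyset,j}$ is uniformly bounded below on $K\times\{p\to d\infty\}$ for some interval $K\ni\partial_s$ --- does not follow from the ingredients you list. Condition \ref{condition:basic} only constrains compacts $K\subseteq E^\circ$, and convexity in $p$ plus continuity up to $\partial_s$ give no uniform control as $x\to\partial_s$ and $p\to d\infty$ simultaneously: for instance $\cH(x,p)=x^4p^2-xp$ on $[0,\infty)$ is smooth, convex in $p$, satisfies Condition \ref{condition:basic}(1) on every compact subset of $(0,\infty)$, yet $\inf_{x\in[0,\epsilon]}\cH(x,p)\sim -c\,p^{2/3}\to-\infty$ as $p\to+\infty$. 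So this step needs either an additional hypothesis (which holds for all the building blocks actually used in the paper, each being of the form $c(x)[e^{kp}-1]\geq -c(x)$ and hence bounded below on compacts) or a comparison of rates against the strong summand; as written, the claim is false and the case-(b) argument has a genuine hole.
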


\begin{proof}
	All claims are immediate.
\end{proof}

\subsection{The comparison principle for a class of multi-dimensional systems} \label{section:comparison_multi_d}

We proceed with giving easily verifiable conditions for a class of Hamiltonians on $E := [0,\infty)^d$ that corresponds to the large deviations of $d$ interacting species. Regarding the boundary structure that we introduced in Section \ref{section:boundary_structure}, we write $E_i=\left\{x\in E, x_i=0\right\}$ for all $i$. For $J \subseteq \{1,\dots,d\}$, we write $E_J := \bigcap_{i \in J} E_i$.

\smallskip

The dynamics that we will consider consists of two parts. The first part arises from interaction between individuals. This leads to a base Hamiltonian $\cH_0$ that vanishes on the boundary. We will consider a second part that consists of immigration of individuals, and a second part that consists of harvesting. This final part introduces a discontinuity of the dynamics at the boundary.

\begin{condition} \label{condition:boundary_multi_d}
	We have a continuous map $\cH_0(x,p)$ such that $p \mapsto \cH_0(x,p)$ is convex for each $x \in E$. In addition, we have for each $i \in \{1,\dots,d\}$ positive continuous functions $a_i,b_i : E \rightarrow \bR$.
	
	$\cH_0$ and $a_i,b_i$ satisfy either (a) or (b):
	\begin{enumerate}[(a)]
		\item \begin{enumerate}[(i)]
			\item For each compact $K \subseteq E$ there is a $M_K \geq 0$ such that
			\begin{equation*}
			\cH_0(x,p) \geq -|p| M_K.
			\end{equation*}
			\item For each $i \in \{1,\dots,d\}$ the functions $a_i,b_i$ are strictly positive continuous. 
		\end{enumerate}
		\item \begin{enumerate}[(i)]
			\item For each compact set $K \subseteq E^\circ$ there is some $M_K \geq 0$ such that 
			\begin{equation*}
			\lim_{|p|\rightarrow \infty} \inf_{x \in K} \frac{\cH(x,p)}{|p|} \geq M_K.
			\end{equation*}
			\item The functions $a_i,b_i$ are strictly positive on a neighbourhood of $\partial E$.
		\end{enumerate}
	\end{enumerate}
\end{condition}

The difference in conditions (a) and (b) is as follows: (b) requires at least linear growth of $\cH_0$ but puts no conditions on $a_i,b_i$ on the interior of $E$, whereas (a) relaxes the growth of $\cH_0$ and puts conditions on $a_i,b_i$ on the interior instead.

\begin{remark}
	Note that a map $\cH_0$ that is continuously differentiable immediately satisfies the lower bound in (a).(i) as $\cH_0(x,p) \geq |p| |\partial_p \cH_0(x,0)|$.
\end{remark}

We then consider the generating set of Hamiltonians $\bH$ defined as
\begin{align*}
\cH_\emptyset(x,p) & = \cH_0(x,p)+\sum_{k=1}^d a_k(x)(e^{p_k}-1)+b_k(x)(e^{-p_k}-1)\\
\intertext{and for $J\in\cJ(E)$ and $x\in E_J$}
\cH_J(x,p) & =  \cH_0(x,p)+\sum_{k\in J} a_k(x)(e^{p_k}-1) + \sum_{l\notin J}a_l(x)(e^{p_l}-1)+b_l(x)(e^{-p_l}-1).
\end{align*}

\begin{theorem}\label{theorem:comparison_principle_multid}
	Let Condition \ref{condition:boundary_multi_d}  be satisfied for $\bH$ and suppose there is a good containment function $\Upsilon$ for $\bH$. Then the comparison principle holds for subsolutions to $f - \lambda H_\dagger f = h$ and supersolutions to $f - \lambda H_\ddagger f = h$.
\end{theorem}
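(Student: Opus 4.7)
The plan is to apply Proposition \ref{proposition:comparison_conditions_on_H} with the Euclidean penalization $\Psi(x,y) = \tfrac{1}{2}|x-y|^2$, so that the momentum becomes $p_\alpha := \alpha \nabla \Psi(\cdot, y_\alpha)(x_\alpha) = \alpha(x_\alpha - y_\alpha)$. Mirroring the opening of the proof of Theorem \ref{theorem:comparison_principle_1d}, I fix $\varepsilon > 0$, choose $(x_\alpha, y_\alpha) = (x_{\alpha,\varepsilon}, y_{\alpha,\varepsilon})$ as in \eqref{eqn:comparison_principle_proof_choice_of_sequences}, and use Lemma \ref{lemma:doubling_lemma} to pass to a subsequence along which $(x_\alpha, y_\alpha) \to (z,z)$ for some $z \in E$. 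Lemma \ref{lemma:control_on_H} simultaneously provides $\sup_\alpha \cH_\ddagger(y_\alpha, p_\alpha) < \infty$. The goal reduces to verifying
\[
\liminf_{\alpha \to \infty} \bigl[\cH_\dagger(x_\alpha, p_\alpha) - \cH_\ddagger(y_\alpha, p_\alpha)\bigr] \leq 0.
\]

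The key step, and the place where the specific structure of Condition \ref{condition:boundary_multi_d} together with $E = [0,\infty)^d$ enters, is a reduction to the interior Hamiltonian:
\[
\cH_\dagger(x_\alpha, p_\alpha) = \cH_\emptyset(x_\alpha, p_\alpha), \qquad \cH_\ddagger(y_\alpha, p_\alpha) = \cH_\emptyset(y_\alpha, p_\alpha) \quad \text{for all } \alpha.
\]
To see the first identity, note that from the definitions,
\[
\cH_J(x,p) - \cH_\emptyset(x,p) = -\sum_{k \in J} b_k(x)\bigl(e^{-p_k} - 1\bigr)
\]
for any $J$ with $x \in E_J$. For such a $J$, $k \in J$ forces $x_{\alpha,k} = 0$, so $p_{\alpha,k} = -\alpha y_{\alpha,k} \leq 0$, hence $e^{-p_{\alpha,k}} - 1 \geq 0$ and each summand on the right is non-positive. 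Taking the maximum over admissible $J$ therefore singles out $J = \emptyset$. The identity for $\cH_\ddagger$ at $y_\alpha$ is symmetric: for $k \in J$ with $y_{\alpha,k} = 0$ one has $p_{\alpha,k} \geq 0$, which makes each summand in the same sum non-positive from the other side, so the minimum is again attained at $J = \emptyset$.

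Once this reduction is in place, what remains is to show $\liminf_\alpha [\cH_\emptyset(x_\alpha, p_\alpha) - \cH_\emptyset(y_\alpha, p_\alpha)] \leq 0$. I will establish this by proving that $\{p_\alpha\}$ is bounded and then invoking continuity of $\cH_0, a_k, b_k$ together with $x_\alpha, y_\alpha \to z$ to produce a subsequential limit of the form $\cH_\emptyset(z, p^*) - \cH_\emptyset(z, p^*) = 0$. Boundedness of $p_\alpha$ follows from $\sup_\alpha \cH_\emptyset(y_\alpha, p_\alpha) < \infty$ combined with coercivity of $p \mapsto \cH_\emptyset(y,p)$ on a neighborhood of $z$. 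Under Condition \ref{condition:boundary_multi_d}(a), strict positivity of all $a_k, b_k$ makes each summand $a_k(y)(e^{p_k}-1) + b_k(y)(e^{-p_k}-1)$ grow exponentially in $|p_k|$, which dominates the $-|p|M_K$ lower bound on $\cH_0$ uniformly on compacts and yields the required coercivity. Under Condition \ref{condition:boundary_multi_d}(b) the strategy is localized: for $z \in E^\circ$ the linear coercivity of $\cH_0$ on a compact neighborhood of $z$ inside $E^\circ$ together with non-negativity of the remaining terms suffices, while for $z \in \partial E$ the strict positivity of $a_k, b_k$ in a neighborhood of $\partial E$ restores the exponential coercivity needed to absorb the (at worst) linear lower bound on $\cH_0$.

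The technically delicate point will be the $z \in \partial E$ case under Condition \ref{condition:boundary_multi_d}(b), where one must carefully choose a small enough neighborhood of $z$ on which the boundary-positivity of $a_k, b_k$ is in force; in that neighborhood the coercivity argument is structurally the same as under (a). Apart from this localization, the entire argument is a direct multi-dimensional analogue of the interior step in the proof of Theorem \ref{theorem:comparison_principle_1d}, and no technique beyond Lemmas \ref{lemma:doubling_lemma}, \ref{lemma:control_on_H} and Proposition \ref{proposition:comparison_conditions_on_H} is required.
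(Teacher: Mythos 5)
Your proposal follows the paper's proof essentially verbatim: the same quadratic penalization, the same reduction of $\cH_\dagger(x_\alpha,p_\alpha)$ and $\cH_\ddagger(y_\alpha,p_\alpha)$ to $\cH_\emptyset$ via the sign of $p_{\alpha,k}$ at coordinates where $x_\alpha$ or $y_\alpha$ vanishes, and the same boundedness-of-$p_\alpha$ argument combining Lemma \ref{lemma:control_on_H} with coercivity of $\cH_\emptyset$ followed by continuity along a convergent subsequence. The only difference is that you spell out how coercivity follows from the two cases of Condition \ref{condition:boundary_multi_d}, a step the paper simply asserts as \eqref{equation:strong_condition_ddimension}.
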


\begin{proof}
	Before starting, note that Condition \ref{condition:boundary_multi_d} implies that for any compact set $K\subseteq E$, $\forall\  1\leq j \leq d$,
	\begin{equation} \label{equation:strong_condition_ddimension}
	\lim_{|p|\rightarrow \infty}\inf_{x\in K} \cH_\emptyset(x,p)=\infty.
	\end{equation}
	
	To establish the comparison principle, we use Proposition \ref{proposition:comparison_conditions_on_H} with penalization function $\Upsilon(x,y) = \frac{1}{2} |x-y|^2$. Thus, for $\alpha,\varepsilon >0$ let $x_{\alpha,\varepsilon},y_{\alpha,\varepsilon} \in E$ be such that
	\begin{multline*} 
	\frac{u(x_{\alpha,\varepsilon})}{1-\varepsilon} - \frac{v(y_{\alpha,\varepsilon})}{1+\varepsilon} -  \Psi_\alpha(x_{\alpha,\varepsilon},y_{\alpha,\varepsilon}) - \frac{\varepsilon}{1-\varepsilon}\Upsilon(x_{\alpha,\varepsilon}) -\frac{\varepsilon}{1+\varepsilon}\Upsilon(y_{\alpha,\varepsilon}) \\
	= \sup_{x,y \in E} \left\{\frac{u(x)}{1-\varepsilon} - \frac{v(y)}{1+\varepsilon} - \Psi_\alpha(x,y)  - \frac{\varepsilon}{1-\varepsilon}\Upsilon(x) - \frac{\varepsilon}{1+\varepsilon}\Upsilon(y)\right\}.
	\end{multline*}
	We fix $\varepsilon > 0$ and prove that
	\begin{equation}\label{condH:negative:liminf_explicit_multid}
	\liminf_{\alpha \rightarrow \infty} \cH_\dagger\left(x_{\alpha,\varepsilon},\alpha\nabla \Psi(\cdot,y_{\alpha,\varepsilon})(x_{\alpha,\varepsilon})\right) - \cH_\ddagger\left(y_{\alpha,\varepsilon},\alpha\nabla \Psi(\cdot,y_{\alpha,\varepsilon})(x_{\alpha,\varepsilon})\right) \leq 0.
	\end{equation}
	Henceforth, we drop the $\varepsilon$ from the notation. By Lemma \ref{lemma:doubling_lemma} the sequences $(x_\alpha,y_\alpha)$ are contained in some compact set with limit points of the form $(z,z)$. Up to a subsequence we can suppose $\{x_\alpha,y_\alpha\}\rightarrow (z,z)$.  Write 
	\begin{equation*}
	p_\alpha = (p_{\alpha,1},p_{\alpha,2}....,p_{\alpha,d}) := \alpha\nabla \Psi(\cdot,y_{\alpha})(x_{\alpha}). 
	\end{equation*}
	The rest of the proof depends on whether  $z$ belongs to the boundary or not.

	\textit{Step 1}: Suppose that $z\in E^\circ$. Then let $K\subseteq E^\circ$ a compact set with $z\in K$. Lemma \ref{lemma:control_on_H} combined with \eqref{equation:strong_condition_ddimension}  ensures that $\forall \ 1\leq j\leq d$, $|p_{\alpha,j}|<\infty$. Thus going to a subsequence such that $p_\alpha$ converges, continuity of $\cH_\emptyset$ implies that \eqref{condH:negative:liminf_explicit_multid} holds.
	
	\textit{Step 2}: Suppose that $z\in \partial E$. Note that as $p_{\alpha,j} =  \alpha(x_{\alpha,j}-y_{\alpha,j})$ for each $j$:
	\begin{equation*}
	\left\{1\leq j \leq d \, \middle| \, x_{\alpha,j} = 0\right\} \subseteq \left\{1\leq j \leq d \, \middle| \, p_{\alpha,j} \leq 0\right\}.
	\end{equation*}	
	Next, let $J \in \cJ(E)$ such that $x_\alpha \in E_J$. We find that $p_{\alpha,j}\leq 0$ and, hence,
	\begin{equation*}
	\cH_\emptyset(x_\alpha,p_\alpha)-\cH_J(x_\alpha,p_\alpha)=\sum_{j\in J}b_j(x_\alpha)\left(e^{-p_{\alpha,j}}-1\right)\geq 0.
	\end{equation*}
	As this holds for all $J$ with $x_\alpha \in E_J$, we find $\cH_\dagger(x_\alpha,p_\alpha) = \max_{J\in\cJ(E):x_\alpha \in E_J} \cH_J(x_\alpha,p_\alpha) = \cH_\emptyset(x_\alpha,p_\alpha)$. Similarly, one establishes $\cH_\ddagger(y_\alpha,p_\alpha) = \cH_\emptyset(y_\alpha,p_\alpha)$. 
	
	Thus \eqref{condH:negative:liminf_explicit_multid} follows as in step 1. 
\end{proof}

\section{Application to large deviations} \label{section:abstract_LDP}

In the extensive monograph, \cite{FK06}, Feng and Kurtz introduced a general method to prove large deviations with speed $r_n$ for a sequence of Markov processes $\{X_n\}_{n \geq 1}$, having generators $\{ A_n \}_{n \geq 1}$, via the well-posedness of associated Hamilton-Jacobi equations. See \cite{Kr19,Kr19c} for a recent new proof of this result. The Feng-Kurtz strategy is based on three observations.
\begin{enumerate}[(1)]
	\item By an extended variant of the projective limit theorem, the large deviation principle for processes follows from exponential tightness combined with large deviation principles for the finite dimensional distributions.
	\item The large deviation principle for finite dimensional distributions is established by a variant of Bryc's theorem, using the Markov property of the processes. It suffices to assume large deviations for time $0$ and that the semigroup of conditional log-Laplace transforms 
	\begin{equation*}
	V_n(t)f(x) = \frac{1}{r_n} \log \bE\left[e^{r_nf(X_n(t))} \, \middle| X_n(0) = x \right]
	\end{equation*}
	converge to a limiting semigroup $V(t)$.
	\item Using control theory, the limiting semigroup can be rewritten as a variational semigroup, which allows to rewrite the rate-function in Lagrangian form.
\end{enumerate}

The convergence of semigroups for (2) uses non-linear semigroup theory. We discuss the connection of the convergence of semigroups to the comparison principles that we studied in Section \ref{section:framework_HJ_boundary}.

For each $n$, we distinguish three objects of importance:
\begin{itemize}
	\item The semigroup $\{V_n(t)\}_{t \geq 0}$;
	\item The formal generator $H_n = \frac{\dd}{\dd t} |_{t = 0} V_n(t) f$ of the semigroup $\{V_n(t)\}_{t \geq 0}$ is given by $H_nf = r_n^{-1} e^{-nf} A_n e^{nf}$;
	\item The resolvents $\{R(\lambda)\}_{\lambda > 0}$, formally given by $R(\lambda) = (\bONE- \lambda H_n)^{-1}$.
\end{itemize}
The main step in the Trotter-Kato-Kurtz proof of the convergence of semigroups is that the convergence of resolvents implies the convergence of semigroups. This result is usually extended with the fact that the convergence of generators, and a limiting generator that is sufficiently big, implies the convergence of resolvents.

\smallskip

In our setting, it is not clear how to directly construct a limiting generator as the coefficients of the generator can be discontinuous at the boundary. As a consequence, it is also not clear how to establish the convergence of resolvents directly.

The theory of viscosity solutions provides us with an alternative. \cite{FK06} introduce two operators $H_\dagger \subseteq ex-\subLIM_n H_n$, $H_\ddagger \subseteq ex-\superLIM_n H_n$ that serve as a limiting `upper' and `lower' bound for the operators $H_n$, see Definition \ref{definition:subLIM_superLIM} below. These upper and lower bounds can be used to obtain upper and lower bounds for the limiting resolvent: let $h_n$ converge boundedly and uniformly on compacts to a function $h$, then one can show that the functions $\overline{f},\underline{f}$ given by
\begin{equation} \label{eqn:semi_relaxed_limits}
\begin{aligned}
\overline{f}(x) & := \sup\left\{\limsup_{n \rightarrow \infty} R_n(\lambda)h_n(x_n) \, \middle| \, x_n \rightarrow x\right\}, \\
\underline{f}(x) & := \inf\left\{\liminf_{n \rightarrow \infty} R_n(\lambda)h_n(x_n) \, \middle| \, x_n \rightarrow x\right\},
\end{aligned}
\end{equation}
are a viscosity subsolution to $f - \lambda H_\dagger f = h$ and a viscosity supersolution to $f - \lambda H_\ddagger f = h$ respectively. Thus, if the comparison principle is satisfied, there is a unique limiting function $\overline{f} = \underline{f}$, which we will denote by $R(\lambda)h$. By the Crandall-Liggett theorem, \cite{CL71}, these resolvents generate a semigroup
\begin{equation*}
V(t)f = \lim_{n \rightarrow \infty} R\left(\frac{t}{n},H\right)^n f,
\end{equation*}
and by the Trotter-Kurtz approximation theorem we have $V_n(t) \rightarrow V(t)$. 

To conclude this discussion: the comparison principle is sufficient to establish (2), which, combined with a standard verification of exponential tightness (1), is sufficient to establish path-space large deviations of a sequence of Markov processes. 

\smallskip

Before discussing this results in Section \ref{section:synthesis_LDP}, we introduce control theory in Section \ref{section:control_theory}, so that (3) leads us to a Lagrangian form of the rate function. Using control theory, we follow \cite{FK06} in introducing a variational semigroup $\{\bfV(t)\}_{t \geq 0}$, as well as a variational resolvent $\{\bfR(\lambda)\}_{\lambda >0}$. It can be shown, cf. \cite[Theorem 8.27]{FK06} that the variational resolvent also yields subsolutions to $f - \lambda H_\dagger f = h$ and supersolutions to $f - \lambda H_\ddagger f = h$. Thus, the comparison principle establishes that the variational resolvent must equal the resolvent $R(\lambda)$. This in turn establishes the equality of semigroups, which leads to a Lagrangian form of the rate function.

\subsection{Control theory} \label{section:control_theory}

Let $E$ be a $d$ dimensional convex Polyhedron, let $\bH = \{\cH_J\}_{J \in \cJ(E)}$ be a generating set of Hamiltonians and let $\cL_J : E_J \times \bR^d \rightarrow [0,\infty]$ and $\cL : E \times \bR^d \rightarrow [0,\infty]$ the corresponding Lagrangians as defined in Definition \ref{definition:construction_of_L}. Using $\cL$ we introduce a variational semigroup and resolvent:
\begin{align*}
\bfV(t)f(x) & := \sup_{\substack{\gamma \in \cA\cC, \\ \gamma(0) = x}} \left\{f(\gamma(t)) - \int_0^t \cL(\gamma(s),\dot{\gamma}(s)) \dd  \right\}, \\
\bfR(\lambda) f(x) & := \limsup_{t \rightarrow \infty} \sup_{\substack{\gamma \in \cA\cC \\ \gamma(0) = x}} \left\{ \int_0^t \lambda^{-1} e^{-\lambda^{-1}t} \left(f(\gamma(t)) - \int_0^s \cL(\gamma(r),\dot{\gamma}(r)) \dd r \right) \dd s \right\}.
\end{align*}

In the next two propositions, we establish the conditions that are needed for the application of the control theory component of \cite[Theorem 8.27]{FK06}. The first result can be used to establish the path-space compactness of the set of trajectories that start in a compact set and have uniformly bounded Lagrangian cost. The compactness of this set can be used to establish various properties of $\bfV$ and $\bfR$. The second result is crucial in establishing that the lower semi-continuous regularization of $\bfR(\lambda)h$ is a viscosity supersolution to the Hamilton-Jacobi equation $f - \lambda H_\ddagger f = h$. 

These properties are proven in \cite{FK06}, where the results of the propositions below are taken as an input for the theory. The outcomes of these are used to establish the variational expression of the rate-function in Theorem \ref{theorem:abstract_LDP}.

\begin{proposition} \label{proposition:FK8.9}
	Let $\{\cH_J\}_{J \in \cJ(E)}$ be a generating set of Hamiltonians. Suppose there is a good containment function $\Upsilon$ for $\{\cH_J\}_{J \in \cJ(E)}$. Then we have
	\begin{enumerate}[(a)]
		\item $\cL : E \times \bR^d \rightarrow [0,\infty]$ is lower semi-continuous and for each compact set $K \subseteq E$ and $c \in \bR$ the set
		\begin{equation*}
		\left\{(x,v) \in K \times \bR^d \, \middle| \,  \cL(x,v) \leq c \right\}
		\end{equation*}
		is compact in $E \times \bR^d$
		\item For each compact $K \subseteq E$, $T > 0$ and $0 \leq M < \infty$, there exists a compact set $K' = K'(K,T,M) \subseteq E$ such that $\gamma \in \cA\cC$ and $\gamma(0) \in K$ and
		\begin{equation*}
		\int_0^T \cL(\gamma(s),\dot{\gamma}(s)) \, \dd s \leq M
		\end{equation*}
		implies $\gamma(t) \in K'$ for all $0 \leq t \leq T$.
		\item For each $f \in C_b^2(E)$ and compact $K \subseteq E$, there exists a right-continuous non-decreasing function $\psi_{f,K} : \bR^+ \rightarrow \bR^+$ such that $\lim_{r \rightarrow \infty} r^{-1} \psi_{f,K}(r)  = 0$ and
		\begin{equation*}
		|\ip{\nabla f(x)}{v}| \leq \psi_{f,K}(\cL(x,v)), \qquad \forall (x,v) \in E \times \bR^d, x \in K. 
		\end{equation*}
	\end{enumerate}
\end{proposition}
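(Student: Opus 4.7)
The three claims are standard consequences of Fenchel--Young inequalities combined with the bound ($\Upsilon$d); the proof will follow closely the template of \cite[Lemma~8.8]{FK06}. The common preparatory observation is that $\cH_\dagger(\cdot,p)$ is \emph{upper} semi-continuous on $E$. Writing $\cJ(x) := \{J\in\cJ(E):x\in E_J\}$, each $E_J$ is closed in $E$, so for any $x_n\to x$ and any $J\notin\cJ(x)$ we have $x_n\notin E_J$ eventually; since $\cJ(E)$ is finite, $\cJ(x_n)\subseteq\cJ(x)$ for $n$ large, whence $\limsup_n\cH_\dagger(x_n,p)\leq\max_{J\in\cJ(x)}\cH_J(x,p)=\cH_\dagger(x,p)$. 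In particular, for every compact $K\subseteq E$ and $R>0$ the quantity $\sup_{x\in K,|p|\leq R}\cH_\dagger(x,p)$ is finite.

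For part~(a), lower semi-continuity of $\cL$ on $E\times\bR^d$ follows since $\cL$ is the supremum of the functions $f_p(x,v):=pv-\cH_\dagger(x,p)$, each lower semi-continuous in $(x,v)$ by the upper semi-continuity of $\cH_\dagger(\cdot,p)$. Compactness of $\{(x,v)\in K\times\bR^d:\cL(x,v)\leq c\}$ will then reduce to a uniform bound on $|v|$; letting $p$ range over the unit ball in Fenchel--Young gives
\[
|v|=\sup_{|p|\leq 1}\ip{p}{v}\leq c+\sup_{x\in K,\,|p|\leq 1}\cH_\dagger(x,p)<\infty.
\]

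For part~(b), the plan is to use $\Upsilon$ as a Lyapunov function along $\gamma$. Since $\Upsilon$ is $C^2$ and $\gamma$ is absolutely continuous, $t\mapsto\Upsilon(\gamma(t))$ is absolutely continuous with a.e.\ derivative $\ip{\nabla\Upsilon(\gamma(s))}{\dot\gamma(s)}$, and Fenchel--Young with $p=\nabla\Upsilon(\gamma(s))$ combined with ($\Upsilon$d) yields
\[
\ip{\nabla\Upsilon(\gamma(s))}{\dot\gamma(s)}\leq\cL(\gamma(s),\dot\gamma(s))+C,\qquad C:=\max_{J\in\cJ(E)}\sup_{z\in E_J}\cH_J(z,\nabla\Upsilon(z))<\infty.
\]
Integrating and using $\gamma(0)\in K$ gives $\Upsilon(\gamma(t))\leq\sup_K\Upsilon+M+CT=:R$ for all $t\in[0,T]$, so $\gamma([0,T])$ lies in the compact set $K':=\{y\in E:\Upsilon(y)\leq R\}$, compact by ($\Upsilon$c).

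For part~(c), I would apply the same inequality with the scaled test momentum $\sigma\lambda\nabla f(x)$, $\sigma=\pm 1$, to get, after optimizing $\sigma$,
\[
|\ip{\nabla f(x)}{v}|\leq\lambda^{-1}\cL(x,v)+\lambda^{-1}C_{f,K}(\lambda),\qquad C_{f,K}(\lambda):=\sup_{x\in K,\,\sigma=\pm 1}\cH_\dagger(x,\sigma\lambda\nabla f(x)),
\]
with $C_{f,K}(\lambda)<\infty$ since $\nabla f$ is bounded on $K$ and $\cH_\dagger$ is bounded above on compacts. Setting $\psi_{f,K}(r):=\inf_{\lambda>0}\lambda^{-1}(r+C_{f,K}(\lambda))$ and choosing $\lambda=\epsilon^{-1}$ yields $\psi_{f,K}(r)\leq\epsilon r+\epsilon C_{f,K}(\epsilon^{-1})$ for every $\epsilon>0$, hence $\limsup_{r\to\infty}r^{-1}\psi_{f,K}(r)\leq\epsilon$ for all $\epsilon$; right-continuity is then obtained by passing to the right-continuous envelope. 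The only genuine subtlety I anticipate is the preparatory upper semi-continuity of $\cH_\dagger$: once that structural fact is in place, it drives the lower semi-continuity in~(a) and the finiteness of the constants in~(b) and~(c), and the remainder is bookkeeping.
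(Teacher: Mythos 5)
Your proposal is correct and follows essentially the same route as the paper: part (b) is the identical Lyapunov/Fenchel--Young argument, while for (a) and (c) you simply write out in full the standard arguments that the paper delegates to \cite[Lemma 2]{Kr16b} and \cite[Lemma 10.21]{FK06}, including the finiteness of $\sup_{x\in K,\,|p|\le R}\cH_\dagger(x,p)$ that the paper also singles out as the essential ingredient. Your preliminary observation that $\cH_\dagger(\cdot,p)$ is upper semi-continuous (via closedness of the $E_J$ and finiteness of $\cJ(E)$) is a detail the paper leaves implicit but which is indeed what makes the joint lower semi-continuity of $\cL$ in (a) go through.
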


\begin{proof}
	(a) $\cL : E \times \bR^d \rightarrow [0,\infty]$ is lower semi-continuous as it is the Legendre transform of $H_\dagger$. The compactness of the level sets follows as in the proof of Lemma 2 in \cite{Kr16b}.
	
	(b) The proof is a standard Lyapunov function proof. We follow \cite{CoKr17}. Let $\gamma$ be absolutely continuous with Lagrangian cost bounded by $M$ and $\gamma(0) \in K$. Let $t \leq T$, then
	\begin{align*}
	\Upsilon(\gamma(t)) & = \Upsilon(\gamma(0)) + \int_0^t \ip{\nabla\Upsilon(\gamma(s))}{\dot{\gamma}(s)} \dd s \\
	& \leq \Upsilon(\gamma(0)) + \int_0^t \cL(\gamma(s),\dot{\gamma}(s)) + \cH(\gamma(s),\nabla \Upsilon(\gamma(s))) \dd s \\
	& \leq \sup_{y \in K} \Upsilon(y) + M + \int_0^T \sup_z \sup_{J: z \in E_J}  \cH_J(z,\nabla \Upsilon(z)) \dd s \\
	& =: C < \infty.
	\end{align*}
	Thus, we can take $K'= \{z \in E \, | \, \Upsilon(z) \leq C\}$.
	
	(c) follows as in the proof of Lemma 10.21 in \cite{FK06}. Note that as the $H_J's$ are continuous, we have
	\begin{equation*}
	\overline{\cH}_K(c) = \sup_{|p|\leq c} \sup_{x \in K} \cH_\dagger(x,p) \leq  \sup_{|p|\leq c} \sup_{x \in K} \max_{J: \, x \in E_J} \cH_J(x,p) < \infty,
	\end{equation*}
	which is an essential ingredient for the proof in \cite{FK06}.
\end{proof}

\begin{proposition}\label{proposition:FK8.11}
	Let $\{\cH_J\}_{J \in \cJ(E)}$ be a generating set of Hamiltonians. Suppose there is a good containment function $\Upsilon$ for $\{\cH_J\}_{J \in \cJ(E)}$. 
	
	Then there exists for each $x \in E$ and $f \in C_c^2(E)$ a $\gamma \in \cA\cC$ with $\gamma(0) = x$ such that for all $t_1 < t_2$:
	\begin{equation*}
	\int_{t_1}^{t_2} \cH_\ddagger f(\gamma(s)) \dd s \leq \int_{t_1}^{t_2} \ip{\dd f(\gamma(s))}{\dot{\gamma}(s)} - \cL(\gamma(s),\dot{\gamma}(s)) \, \dd s.
	\end{equation*}
\end{proposition}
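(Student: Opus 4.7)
The plan is to produce an absolutely continuous curve $\gamma$ whose velocity $\dot\gamma(s)$ is, at almost every time, the Legendre dual of $\nabla f(\gamma(s))$ for the Hamiltonian $\cH_{J^*(\gamma(s))}$ associated with the natural boundary stratum $J^*(\gamma(s))$ of $\gamma(s)$ (see Definition \ref{definition:tangent_cones}). Once such a $\gamma$ is in hand, the integral inequality follows pointwise almost everywhere from Legendre duality, combined with the bounds $\cH_\ddagger \le \cH_{J^*}$ (since $\cH_\ddagger$ is the minimum over all strata containing the point) and $\cL \le \cL_J$ from Lemma \ref{lemma:properties_of_L}.

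Concretely, I would define
$$v^*(x) := \partial_p \cH_{J^*(x)}(x, \nabla f(x)).$$
Condition (c) of Definition \ref{definition:generating_Hamiltonians} gives $v^*(x) \in \Gamma_{J^*(x)} = T_x E$, so $v^*$ is a bounded measurable vector field whose values lie in the tangent cone to $E$ at every point, and since $f \in C_c^2(E)$ the bound is global. Legendre duality then yields
$$\cH_{J^*(x)}(x, \nabla f(x)) = \ip{\nabla f(x)}{v^*(x)} - \cL_{J^*(x)}(x, v^*(x)),$$
so that along any curve $\gamma$ satisfying $\dot\gamma(s) = v^*(\gamma(s))$ a.e., using $\cH_\ddagger f(\gamma(s)) \le \cH_{J^*(\gamma(s))}(\gamma(s),\nabla f(\gamma(s)))$ and $\cL(\gamma(s), \dot\gamma(s)) \le \cL_{J^*(\gamma(s))}(\gamma(s), \dot\gamma(s))$ would give the required pointwise inequality, which then integrates to the statement of the proposition.

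The main obstacle, as I expect, is producing the absolutely continuous curve $\gamma : \bR^+ \to E$ with $\gamma(0) = x$ and $\dot\gamma = v^* \circ \gamma$ almost everywhere, because $v^*$ is generically discontinuous across boundary strata (the map $x \mapsto J^*(x)$ jumps between strata). My plan is to carry out a Filippov-type approximation: regularise $v^*$ by a sequence of continuous, uniformly bounded vector fields $v^*_n$ which converge to $v^*$ in measure and can be chosen to take values in $T_x E$ near each $x$, so that classical Peano solutions $\gamma_n$ remain inside $E$. The good containment function $\Upsilon$ keeps all $\gamma_n$ inside a common compact subset of $E$, allowing Arzel\`a-Ascoli to extract a uniformly convergent subsequence, and the lower semi-continuity and compactness of sub-level sets of $\cL$ from Proposition \ref{proposition:FK8.9}(a) let us pass the integral inequality to the limit. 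Since $f$ has compact support, outside that support $v^*$ may be taken to vanish, and $\gamma$ can be frozen there to extend it to all of $[0,\infty)$, making the global-in-time existence routine once the local construction is in place.
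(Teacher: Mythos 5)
Your first-order idea --- follow the Legendre-dual velocity of the Hamiltonian attached to the active stratum, use $\cH_\ddagger \leq \cH_{J^*}$, $\cL \leq \cL_{J^*}$ and exact duality at $v^* = \partial_p\cH_{J^*}$ --- is the same as the paper's, and the pointwise inequality you state is correct \emph{at points where the velocity is exactly} $v^*(x)$. The gap is in the existence step, and it is not a technicality. A Filippov-type regularisation of the discontinuous field $v^*$ produces, in the limit, a solution of the differential inclusion $\dot\gamma(s) \in K[v^*](\gamma(s))$, where $K[v^*](x)$ is (contained in) the closed convex hull $\mathrm{ch}\bigcup_{J:\,x\in E_J}\partial_p\cH_J(x,\nabla f(x))$ --- it does \emph{not} produce a curve with $\dot\gamma = v^*(\gamma)$ a.e. At a boundary stratum the limiting velocity is a genuine convex combination $v=\sum_J\lambda_J\partial_p\cH_J(x,\nabla f(x))$, and for such $v$ your pointwise inequality is not established: Fenchel's inequality only gives $\ip{\nabla f(x)}{v}-\cL(x,v)\leq \cH_\dagger(x,\nabla f(x))$, i.e.\ the wrong direction, unless $v$ is an exact dual velocity. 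The lower bound $\ip{\nabla f(x)}{v}-\cL(x,v)\geq\cH_\ddagger(x,\nabla f(x))$ for convex combinations requires the chain $\cL(x,v)\leq\widehat{\cL}(x,v)\leq\sum_J\lambda_J\cL_J(x,v_J)$ from the definition of $\widehat{\cL}$ and Lemma \ref{lemma:properties_of_L}, followed by duality term by term. This is precisely Step 1 of the paper's proof and is the mathematical heart of the argument; your proposal skips it. The same omission also undermines your limit passage: the approximate inequality does not hold along the regularised curves $\gamma_n$, because $v_n^*(\gamma_n(s))$ is a mollified (hence non-dual) velocity, so there is nothing to pass to the limit.

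A second, related gap is invariance of $E$. You assert that the regularised fields can be chosen tangent so that Peano solutions stay in $E$, but mollifying $v^*$ near a lower-dimensional stratum averages in velocities $\partial_p\cH_{J^*(y)}(y,\cdot)$ from neighbouring, less constrained strata, which lie in the larger cones $\Gamma_{J^*(y)}\supseteq\Gamma_{J^*(x)}=T_xE$ and may point out of $E$ at $x$ (think of the corner of $[0,\infty)^2$). A single-valued ODE forces the solution to follow that outward-pointing average, whereas the set-valued viability theorem the paper invokes (Lemma \ref{lemma:solve_differential_inclusion}) only requires $F_f(x)\cap T_xE\neq\emptyset$ and lets the solution select an admissible direction. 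This is why the paper works with the upper semi-continuous, convex-valued inclusion from the outset, plus the a priori compactness from $\Upsilon$ and Proposition \ref{proposition:FK8.9}(b) to justify a cut-off giving bounded growth. To repair your argument you would in effect have to reintroduce both ingredients: prove the inequality for all of $\mathrm{ch}\bigcup_J\partial_p\cH_J$ via $\widehat{\cL}$, and replace Peano by a viability theorem for differential inclusions.
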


The proof of the proposition uses the theory of differential inclusions. In Appendix \ref{appendix:differential_inclusions}, we give the relevant basic definitions and a result that establishes the existence of a solution to `well behaved' differential inclusions.

\begin{proof}
	Pick $f \in C_c^2(E)$ and $x \in E$.
	
	\textit{Step 1:} Suppose we have a solution $\gamma_f$ to the differential inclusion
	\begin{equation} \label{eqn:proof_8.11_diff_inclusion}
	\dot{\gamma}_f(s) \in F_f(x) := \text{ch} \bigcup_{J : x \in E_J} \partial_p \cH_J(\gamma_f(s),\nabla f(\gamma_f(s))), \qquad \gamma_f(0) = x.
	\end{equation}
	We show that 
	\begin{equation} \label{eqn:proof_8.11_bound}
	\int_{t_1}^{t_2} \cH_\ddagger f(\gamma_f(s)) \dd s \leq \int_{t_1}^{t_2} \ip{\nabla f(\gamma_f(s))}{\dot{\gamma}_f(s)}  - \cL(\gamma_f(s),\dot{\gamma}_f(s)) \,\dd s.
	\end{equation}
	It suffices to show that for almost every $s$, we have ${\nabla f(\gamma_f(s))}{\dot{\gamma}_f(s)}  - \cL(\gamma_f(s),\dot{\gamma}_f(s)) \geq \cH_\ddagger f(\gamma_f(s))$. In particular, it suffices to do this for all times $s$ at which $\dot{\gamma_f(s)} \in F_f(\gamma_f(s))$.
	
	Thus, suppose $v \in F_f(y)$. Then there are $\lambda_J$ and $v_J = \partial_p \cH_J(y,\nabla f(y))$ such that $v = \sum \lambda_J v_J$. As $\cL(y,v) \leq \widehat{\cL}(y,v) \leq \sum \lambda_J \cL_J(x,v_J)$:
	\begin{align*}
	\ip{\nabla f(y)}{v} - \cL(y,v) & \geq \sum \lambda_J \left( \ip{\nabla f(y)}{v_J} - \cL_J(y,v_J) \right) \\
	& \geq \sum \lambda_J \cH_J(y,\nabla f(y)) \\
	& \geq \cH_\ddagger(y,\nabla f(y)),
	\end{align*} 	
	establishing \eqref{eqn:proof_8.11_bound}.

	\textit{Step 2:} We construct a solution to the differential inclusion \eqref{eqn:proof_8.11_diff_inclusion}, for which we use Lemma \ref{lemma:solve_differential_inclusion}. Note that the Lemma assumes growth bounds on the size of $F_f$, that are not necessarily satisfied.
	
	\textit{Step 2a:} Therefore, we start with \textit{a priori} control on the range of a solution. Let, as in step 1, $\gamma_f$ be a solution to \eqref{eqn:proof_8.11_diff_inclusion}. \eqref{eqn:proof_8.11_bound} implies
	\begin{equation*}
	\int_0^T \cL(\gamma_f(s),\dot{\gamma}_f(s)) \dd s \leq 	\int_{0}^{T} \ip{\nabla f(\gamma_f(s))}{\dot{\gamma}_f(s)}  - \cH_\ddagger f(\gamma_f(s)) \,\dd s.
	\end{equation*}
	Note that as $f$ has compact support in $E$, the map $x \mapsto \cH_\ddagger(x,\nabla f(x))$ is bounded from below as all $\cH_J$ are continuous, and $F_f$ is bounded on compact sets, there is some $M \geq 0$ such that
	\begin{equation*}
	\int_0^T \cL(\gamma_f(s),\dot{\gamma}_f(s)) \dd s \leq 	M.
	\end{equation*}
	We conclude by Proposition \ref{proposition:FK8.9} (b) that the trajectory $\gamma_f$ remains in some compact set $K' \subseteq E$. 
	
	\textit{Step 2b:} By step 2a, it suffices to construct a solution to $\dot{\gamma} \in \hat{F}_f(\gamma)$, where $\hat{F}_f$ equals $F_f$ on $K'$, and is equal to $0$ outside a neighbourhood of $K'$. For example, we can smoothly multiply $F_f$ by a cut-off function.
	
	Thus, we construct a solution using Lemma \ref{lemma:solve_differential_inclusion}. We will verify the conditions for $F_f$. Note that the modification above can be made such to preserve the conditions for this lemma.
	
	We check the conditions (a)-(d). (a) is clear by definition. For (b), fix $x \in E$ and a neighbourhood $\cU$ of $F(x)$. We write $J^* := J^*(x)$. As for each $J$ the set $E_J$ is closed in $E$ by Definition \ref{definition:tangent_cones}, we can choose a neighbourhood $\cV$ of $x$ that is contained in the $E$ interior of $E_{J^*}$. This implies that for $y \in \cV_0$ the set $F_f(y)$ is the convex hull of $\partial_p \cH_J(y,\nabla f(y))$ with $J \subseteq J^*$. Because $\partial_p \cH_J$ is continuous for each such $J$ by Definition \ref{definition:generating_Hamiltonians} (b), we can find a smaller neighbourhood $\cV \subseteq \cV_0$ with $x \in \cV$ so that $F_f(y) \subseteq \cU$ for all $y \in \cV$, establishing that $F_f$ is upper semi-continuous.
	
	For (c), first note that $T_E(x) = T_x E$. Let $J^* := J^*(x)$. By Definition \ref{definition:generating_Hamiltonians} (c), we have $\partial_p \cH_{J^*}(x,\nabla f(x)) \subseteq T_x E$ implying that $F_f(x) \cap T_x E \neq \emptyset$.

	Finally, (d) follows from Definition \ref{definition:generating_Hamiltonians} (d).
	
	Thus, by Lemma \ref{lemma:solve_differential_inclusion} there is a solution $\gamma_f$ to the differential inclusion \eqref{eqn:proof_8.11_diff_inclusion}.
\end{proof}

\subsection{Synthesis: a general large deviation principle} \label{section:synthesis_LDP}

To connect the Hamilton-Jacobi equation to the large deviation principle, we introduce some additional concepts. For $E \subseteq \bR^d$ and $E_n \subseteq E$ for all $n$, we write $E = \lim_{n \to \infty} E_n$ if for every $x \in E$ there exist $x_n \in E_n$ such that $x_n \rightarrow x$. 

For $f_n \in C_b(E_n)$ and $f \in C_b(E)$, we write $\LIM f_n = f$ if $\sup_n \vn{f_n} < \infty$ and if for each compact $K \subseteq E$, we have
\begin{equation*}
\lim_{n \rightarrow \infty} \sup_{x \in K \cap E_n} \left|f_n(x) - f(x) \right| = 0.
\end{equation*}

\begin{definition}[Condition 7.11 in \cite{FK06}] \label{definition:subLIM_superLIM}
	Suppose that for each $n$ we have an operator $H_{n} \subseteq C_b(E_n) \times C_b(E_n)$. Let $(v_n)_{n \geq 0}$ be a sequence of real numbers such that $v_n \uparrow \infty$.
	
	The \textit{extended sub-limit}, denoted by $ex-\subLIM_n H_{n}$, is defined by the collection $(f,g) \in C_l(E)\times \cM_u(E,\overline{\bR})$ for which there exist $(f_n,g_n) \in H_{n}$ such that
	\begin{gather} 
	\LIM f_n \wedge c = f \wedge c, \qquad \forall \, c \in \bR, \label{eqn:convergence_condition_sublim_constants} \\
	\sup_n \frac{1}{v_n} \log \vn{g_n} < \infty, \qquad \sup_{n} \sup_{x \in E_n} g_n(x) < \infty, \label{eqn:convergence_condition_sublim_uniform_gn}
	\end{gather}
	and that, for every compact set $K \subseteq E$ and every sequence $z_n \in K \cap E_n$ satisfying $\lim_n z_n = z$ and $\lim_n f_n(z_n) = f(z) < \infty$, we have
	\begin{equation} \label{eqn:sublim_generators_upperbound}
	\limsup_{n \uparrow \infty}g_{n}(z_n) \leq g^*(z).
	\end{equation}
	The \textit{extended super-limit}, denoted by $ex-\superLIM_n H_{n}$, is defined by the collection $(f,g) \in C_u(E)\times \cM_l(E,\overline{\bR})$ for which there exist $(f_n,g_n) \in H_{n}$ such that
	\begin{gather} 
	\LIM f_n \vee c = f \vee c, \qquad \forall \, c \in \bR, \label{eqn:convergence_condition_superlim_constants} \\
	\sup_n \frac{1}{v_n} \log \vn{g_n} < \infty, \qquad \inf_{n} \inf_{x \in E_n} g_n(x) > - \infty, \label{eqn:convergence_condition_superlim_uniform_gn}
	\end{gather}
	and that, for every compact set $K \subseteq E$ and every sequence $z_n \in K \cap E_n$ satisfying $ \lim_n z_n = z$ and $\lim_n f_n(z_n) = f(z) > - \infty$, we have
	\begin{equation}\label{eqn:superlim_generators_lowerbound}
	\liminf_{n \uparrow \infty}g_{n}(z_n) \geq g_*(z).
	\end{equation}
	The \textit{extended limit}, denoted by $ex-\LIM_n H_n$ is defined as $ex-\subLIM_n H_n \cap ex-\superLIM_n H_n$.
\end{definition}

\begin{remark} \label{remark:LIM_via_simple_convergence}
	Suppose that $H \subseteq C_b(E) \times C_b(E)$ such that for all $(f,g) \in H$ there are $(f_n,g_n) \in H_n$ with $f = \LIM f_n$ and $g = \LIM g_n$, then $H \subseteq ex-\LIM_n H_n$.
\end{remark}

All our large deviation statements will be based on the following assumption.

\begin{assumption} \label{assumption:LDP_assumption}
	Let $E_n \subseteq E$ be Polish subsets satisfying $E = \lim_{n \rightarrow \infty} E_n$. 
	
	Assume that for each $n \geq 1$, we have $A_n \subseteq C_b(E_n) \times C_b(E_n)$ and existence and uniqueness holds for the $D_{E_n}(\bR^+)$ martingale problem for $(A_n,\mu)$ for each initial distribution $\mu \in \cP(E_n)$. Letting $\PR_{y}^n \in \cP(D_{E_n}(\bR^+))$ be the solution to $(A_n,\delta_y)$, the mapping $y \mapsto \PR_y^n$ is measurable for the weak topology on $\cP(D_{E_n}(\bR^+))$. Let $X_n$ be the solution to the martingale problem for $A_n$ and set
	\begin{equation*}
	H_n f = \frac{1}{r_n} e^{-r_nf}A_n e^{r_nf} \qquad e^{r_nf} \in \cD(A_n),
	\end{equation*}
	for some sequence of speeds $\{r_n\}_{n \geq 1}$, with $\lim_{n \rightarrow \infty} r_n = \infty$.  Let $\{\cH_J\}_{J \in \cJ(E)}$ be a generating set of Hamiltonians as in Definition \ref{definition:generating_Hamiltonians} and suppose that $H_\dagger \subseteq ex-\subLIM_n H_n$ and $H_\ddagger \subseteq ex-\superLIM_n H_n$. Note: by definition $C_c^2(E) \subseteq \cD(H_\dagger) \cap \cD(H_\ddagger)$.
\end{assumption}

\begin{definition}
	Let $r_n >0$ be a sequence of real numbers such that $r_n \rightarrow \infty$. We say that a sequence of processes $\{X_n\}_{n \geq 1}$ satisfies the \textit{exponential compact containment condition} at speed $\{r_n\}_{n \geq 1}$ if for every $T > 0$ and $a \geq 0$, there exists a compact set $K_{a,T} \subseteq E$ such that
	\begin{equation*}
	\limsup_{n \rightarrow \infty} \frac{1}{r_n} \log \PR\left[X_n(t) \notin K_{a,T} \text{ for some } t \leq T \right] \leq -a.
	\end{equation*}
\end{definition}

The following result follows along the lines of Proposition A.15 in \cite{CoKr17}, whose proof is based on Lemma 4.22 in \cite{FK06}.

\begin{proposition} \label{proposition:compact_containment}
	Consider the setting of Assumption \ref{assumption:LDP_assumption}. Suppose that $\{X_n(0)\}_{n \geq 1}$ is exponentially tight with speed $\{r_n\}_{n \geq 1}$.
	
	Suppose that $\Upsilon$ is a good containment function for $\{\cH_J\}_{J \in \cJ(E)}$. Then the processes $\{X_n\}_{n \geq 1}$ satisfy the exponential compact containment condition at speed $\{r_n\}_{n \geq 1}$
\end{proposition}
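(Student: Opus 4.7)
The plan is the classical Lyapunov-type exponential-martingale argument, following the template of \cite[Lemma 4.22]{FK06} and \cite[Proposition A.15]{CoKr17}, with $\Upsilon$ in the Lyapunov role. Since $\Upsilon$ has compact sub-level sets by $(\Upsilon c)$, it suffices to choose $K_{a,T} := \{\Upsilon \leq R\}$ for $R$ large enough and to prove
\begin{equation*}
\limsup_{n \to \infty} r_n^{-1}\log\PR\left[\sup_{t\leq T}\Upsilon(X_n(t))>R\right]\leq -a.
\end{equation*}
Exponential tightness of $\{X_n(0)\}_{n \geq 1}$ supplies a compact $K_a^0\subseteq E$ carrying $X_n(0)$ up to an error of order $e^{-r_n a}$, and continuity of $\Upsilon$ gives $R_0 := \sup_{K_a^0}\Upsilon < \infty$.

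Next, since $\Upsilon$ is not generally in $C_c^2(E)\subseteq \cD(H_\dagger)$, I would truncate. Fix smooth non-decreasing concave functions $\psi_M : \bR \to [0,M+1]$ with $\psi_M(r)=r$ for $r\leq M$, $\psi_M(r)=M+1$ for $r\geq M+2$, and $\psi_M'\in[0,1]$, and set $f_M := \psi_M\circ \Upsilon \in C_b^2(E)$. Then $\nabla f_M = \psi_M'(\Upsilon)\nabla \Upsilon$, and convexity of each $\cH_J(x,\cdot)$ together with $(\Upsilon d)$ and boundedness of $\cH_J(\cdot,0)$ on sub-level sets of $\Upsilon$ yield a constant $C$, independent of $M$, with $\sup_{x}H_\dagger f_M(x)\leq C$.

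Then I would invoke $H_\dagger \subseteq ex\text{-}\subLIM_n H_n$ to select $(f_{M,n},g_{M,n})\in H_n$ satisfying $\LIM f_{M,n}\wedge c = f_M\wedge c$ for every $c\in\bR$ and $\limsup_n g_{M,n}(z_n)\leq (H_\dagger f_M)^*(z)\leq C$ along every convergent $z_n \to z$. A compactness argument upgrades this to $\sup_{K_R\cap E_n}g_{M,n}\leq C+1$ for $n$ large enough, where $K_R := \{\Upsilon\leq R\}$. Writing $\tau_n^R$ for the first exit of $X_n$ from $K_R$, the generator identity $A_n e^{r_n f_{M,n}} = r_n e^{r_n f_{M,n}} g_{M,n}$ together with this bound and Gronwall's lemma imply that
\begin{equation*}
t\mapsto \exp\bigl(r_n f_{M,n}(X_n(t\wedge \tau_n^R)) - r_n(C+1)(t\wedge \tau_n^R)\bigr)
\end{equation*}
is a positive supermartingale, eventually in $n$. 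Choosing $M>R$ so that $f_{M,n}$ agrees with $\Upsilon$ on $K_R$ in the limit, Doob's maximal inequality combined with conditioning on $\{X_n(0)\in K_a^0\}$ produces an exit probability bounded by an expression of order $e^{-r_n(R - R_0 - (C+1)T - o(1))}$, giving the claim once $R > R_0 + (C+1)T + a$.

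The main obstacle is the passage from the along-sequence upper bound on $g_{M,n}$ supplied by \eqref{eqn:sublim_generators_upperbound} to a uniform bound over the compact set $K_R$ that is usable in the supermartingale estimate. This upgrade must be coordinated with the choice of the stopping time $\tau_n^R$ to avoid a circular dependence between $R$ and the threshold in $n$; the remainder of the argument is a routine adaptation of \cite[Proposition A.15]{CoKr17}.
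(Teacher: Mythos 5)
Your argument is exactly the one the paper intends: the paper does not write out a proof but defers to Proposition A.15 in \cite{CoKr17} and Lemma 4.22 in \cite{FK06}, and your Lyapunov/exponential-supermartingale scheme with the truncated containment function $\psi_M\circ\Upsilon$, the $ex\text{-}\subLIM$ approximation, and Doob's maximal inequality is precisely that argument. The proposal is correct and follows the same route.
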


\begin{theorem} \label{theorem:abstract_LDP}
	Consider the setting of Assumption \ref{assumption:LDP_assumption}. Suppose that $X_n(0)$ satisfies a large deviation principle with speed $\{r_n\}_{n \geq 1}$ and good rate function $I_0$.
	\begin{enumerate}[(a)]
		\item Suppose that $\Upsilon$ is a good containment function for $\{\cH_J\}_{J \in \cJ(E)}$. Then the processes $\{X_n\}_{n \geq 1}$ are exponentially tight with speed $r_n$ in $D_E(\bR^+)$.
		\item In addition to the assumption in (a), suppose that for each $\lambda > 0$ and $h \in C_b(E)$ the comparison principle is satisfied for subsolutions to $f - \lambda H_\dagger f = h$ and supersolutions to $f - \lambda H_\ddagger f = h$. Then the large deviation principle is satisfied with speed $r_n$ for the processes $X_n$ with  good rate function $I$ 
		\begin{equation*}
		I(\gamma) = \begin{cases}
		I_0(\gamma(0)) + \int_0^\infty \cL(\gamma(s),\dot{\gamma}(s)) \, \dd s   & \text{if } \gamma \in \cA \cC(E), \\
		\infty & \text{otherwise}.
		\end{cases}
		\end{equation*}
	\end{enumerate}	
\end{theorem}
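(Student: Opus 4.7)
The plan is to follow the Feng--Kurtz program \cite[Chapters 5--8]{FK06}: rephrase the large deviation question as a question about convergence of nonlinear semigroups $V_n(t)f = r_n^{-1}\log \bE[e^{r_n f(X_n(t))} \mid X_n(0)=\cdot]$, and then use the comparison principle to identify the limit semigroup with the variational semigroup $\bfV(t)$. Throughout the argument the good containment function $\Upsilon$ plays the double role of supplying a Lyapunov function for the pre-limit processes and of constraining admissible trajectories for the Lagrangian cost.

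\smallskip

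\emph{Part (a): exponential tightness.} The first step is to upgrade the exponential tightness of $X_n(0)$ to path-space exponential tightness of $\{X_n\}$ in $D_E(\bR^+)$. By Proposition \ref{proposition:compact_containment}, the good containment function $\Upsilon$ yields the exponential compact containment condition at speed $r_n$. To turn compact containment into path-space exponential tightness I would invoke the standard criterion, cf.\ \cite[Corollary 4.17 and Theorem 4.28]{FK06}: given compact containment, it suffices to produce, for every $f \in C_c^2(E)$, a pre-limit sequence $f_n \to f$ with $H_n f_n$ uniformly bounded. This is exactly what is provided by the inclusions $H_\dagger \subseteq ex\text{-}\subLIM_n H_n$ and $H_\ddagger \subseteq ex\text{-}\superLIM_n H_n$ in Assumption \ref{assumption:LDP_assumption}, since $C_c^2(E) \subseteq \cD(H_\dagger) \cap \cD(H_\ddagger)$.

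\smallskip

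\emph{Part (b): LDP and Lagrangian form.} Fix $\lambda > 0$ and $h \in C_b(E)$, and form the semi-relaxed limits $\overline{f}$ and $\underline{f}$ of the pre-limit resolvents $R_n(\lambda)h_n$ as in \eqref{eqn:semi_relaxed_limits}. The general abstract machinery (cf.\ \cite[Lemma 7.7 and Theorem 7.18]{FK06}) shows that $\overline{f}$ is a viscosity subsolution to $f - \lambda H_\dagger f = h$ and $\underline{f}$ is a viscosity supersolution to $f - \lambda H_\ddagger f = h$; here one uses the $ex\text{-}\subLIM$ and $ex\text{-}\superLIM$ hypotheses. The comparison principle assumed in (b) forces $\overline{f}= \underline{f}$, and I would call this common value $R(\lambda)h$. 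Monotone and contractive properties of $R(\lambda)$ together with the Crandall--Liggett exponential formula produce a limit semigroup $V(t)$ for which the nonlinear Trotter--Kurtz theorem gives $V_n(t) \to V(t)$ in the sense of \cite[Theorem 5.15]{FK06}. Combining this with exponential tightness, the Bryc-type projective argument yields an LDP on $D_E(\bR^+)$ with a good rate function expressed through $V(t)$.

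\smallskip

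To arrive at the Lagrangian formula, I would compare $V(t)$ to the variational semigroup $\bfV(t)$. Propositions \ref{proposition:FK8.9} and \ref{proposition:FK8.11} provide precisely the hypotheses required by \cite[Theorem 8.27]{FK06}: Proposition \ref{proposition:FK8.9} gives lower semi-continuity and compactness of sublevel sets of $\cL$, a Lyapunov-type containment of trajectories with bounded cost, and the $\psi_{f,K}$ control; Proposition \ref{proposition:FK8.11} produces, for every $f \in C_c^2(E)$ and $x \in E$, an absolutely continuous curve realising the lower bound against $\cH_\ddagger$. These ingredients imply that $\bfR(\lambda)h$ is a subsolution to $f - \lambda H_\dagger f = h$ and that its lower semi-continuous regularisation is a supersolution to $f - \lambda H_\ddagger f = h$. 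Applying the comparison principle once more identifies $\bfR(\lambda) = R(\lambda)$, hence $\bfV(t) = V(t)$, and the rate function can then be rewritten in the integrated Lagrangian form
\[
I(\gamma) = I_0(\gamma(0)) + \int_0^\infty \cL(\gamma(s),\dot\gamma(s))\,\dd s
\]
for $\gamma \in \cA\cC(E)$, and $+\infty$ otherwise.

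\smallskip

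The main obstacle in this plan is not any single step but the careful bookkeeping required to verify that our two-operator sub/super limit framework $(H_\dagger, H_\ddagger)$, coming from a possibly discontinuous Hamiltonian at the boundary, genuinely fits the hypotheses of the Feng--Kurtz theorems (which are typically stated for a single operator $H$). In particular one must check that the semi-relaxed limits of $R_n(\lambda)h_n$ really do solve the two separate equations, and that the variational resolvent $\bfR(\lambda)h$, built from $\cH_\dagger$ via its Legendre transform, still produces a viable supersolution against $\cH_\ddagger$. This last point is exactly where Proposition \ref{proposition:FK8.11} and the differential inclusion involving the convex hull of $\{\partial_p \cH_J\}$ enter: they guarantee the existence of an admissible curve $\gamma$ achieving the bound against $\cH_\ddagger$, which is the essential technical bridge between $\cH_\dagger$ (through $\cL$) and $\cH_\ddagger$ (through viscosity supersolutions).
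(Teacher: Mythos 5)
Your proposal is correct and follows essentially the same route as the paper: part (a) via Proposition \ref{proposition:compact_containment} together with the exponential tightness criterion of \cite{FK06}, and part (b) via Theorem 8.27/Corollary 8.28 of \cite{FK06} with Propositions \ref{proposition:FK8.9} and \ref{proposition:FK8.11} supplying the control-theoretic hypotheses and the comparison principle identifying the limiting and variational resolvents. The only detail the paper adds is that the rate function in \cite{FK06} initially involves an infimum over control measures, which is reduced to the stated Lagrangian form by convexity of $\cL$ in the speed variable and Jensen's inequality.
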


\begin{proof}[Proof of Theorem \ref{theorem:abstract_LDP}]	
	(a) follows from Proposition \ref{proposition:compact_containment} and Corollary 4.19 in \cite{FK06}. For the verification of condition(c) for the latter result, we use $F = C_c^2(E)$, whereas (d) is satisfied as $C_c^2(E) \subseteq \cD(H_\dagger)$ by Assumption \ref{assumption:LDP_assumption}.
	
	(b) follows from Theorem 8.27 and Corollary 8.28 in \cite{FK06}. For the application of this result, we use $\bfH_\dagger = H_\dagger$, $\bfH_\ddagger = H_\ddagger$ and $\cA f(x,v) = \ip{\nabla f(x)}{v}$. The conditions for the control theory part in \cite{FK06} have been verified in Propositions \ref{proposition:FK8.9} and \ref{proposition:FK8.11}. Note that the rate function in \cite{FK06} still involves an infimum over control measures. As our Lagrangian is convex in the speed variable, Jensen's inequality gives the final form.
\end{proof}

\section{Proofs for the examples} \label{section:proofs_examples}

We proceed by giving proofs for the results given in Section \ref{section:LDP_for_examples}. These proofs rely on Theorem \ref{theorem:abstract_LDP}, which means that for each set of examples we have to provide a containment function and verify the comparison principle for the corresponding set of Hamilton-Jacobi equations.

\subsection{One dimensional examples}

For our one-dimensional examples, the construction of a containment function will be case dependent. This will also hold for the verification of the comparison principle, although all these verifications will be carried out using Theorem \ref{theorem:comparison_principle_1d}.

\subsubsection{Birth and Death process with immigration, proof of Theorem \ref{theorem:birth_death_immigration}}

\begin{remark}
	Using the framework of Section \ref{section:framework_HJ_boundary}, we  have $E=\cB_{0,1}$, $\cJ=\{\partial_-,\emptyset\}$ and $E_\emptyset=E=[0,\infty)$, $E_{\partial_-}=\{\partial_-\}$ with $\partial_-=0$. For $x>0$, $T_xE=\bR$ and $T_{\partial_-}E=\bR^+$. 
\end{remark}

\begin{lemma} \label{lemma:containment_function_1d_immi}
	Consider the setting of Theorem \ref{theorem:birth_death_immigration}. Then there is a good containment function $\Upsilon$ for $\cH$.
\end{lemma}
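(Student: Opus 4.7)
The plan is to exhibit an explicit candidate and verify conditions $(\Upsilon\text{a})$--$(\Upsilon\text{d})$ of Definition \ref{definition:good_containment_function}. Here $\cJ = \{\emptyset, \{-\}\}$ with $E_\emptyset = [0,\infty)$ and $E_{\{-\}} = \{0\}$, and both generating Hamiltonians coincide with the single $\cH$ of \eqref{equation:hamiltonian_birth_death} restricted to the respective domain. The evaluation at the boundary point $0$ contributes only a single continuous value, so the only non-trivial content is bounding $\sup_{x \geq 0} \cH(x,\Upsilon'(x))$ from above.

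The choice of $\Upsilon$ is dictated by hypothesis (b): since $\lambda(x)+\rho(x) = \cO(x\log x)$ and $\cH$ contains the summand $(\lambda(x)+\rho(x))[e^p-1]$, keeping this product bounded forces $\Upsilon'(x)$ to decay at least like $1/(x\log x)$. I therefore propose
\[
\Upsilon(x) := \log\!\bigl(1 + \log(1+x)\bigr), \qquad \Upsilon'(x) = \frac{1}{(1+x)\bigl(1+\log(1+x)\bigr)} \in (0,1].
\]
Properties $(\Upsilon\text{a})$--$(\Upsilon\text{c})$ are immediate by direct inspection: $\Upsilon(0)=0$, $\Upsilon \geq 0$, $\Upsilon \in C^2$, and the sub-level set $\{\Upsilon \leq c\}$ equals the compact interval $[0, e^{e^c-1}-1]$.

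For $(\Upsilon\text{d})$, I would discard $\mu(x)[e^{-\Upsilon'(x)}-1]$, which is non-positive since $\Upsilon' \geq 0$. On $\Upsilon'(x) \in [0,1]$ the convex estimate $e^t - 1 \leq (e-1)t$ then yields
\[
\cH(x, \Upsilon'(x)) \;\leq\; (e-1)\,\frac{\lambda(x)+\rho(x)}{(1+x)\bigl(1+\log(1+x)\bigr)},
\]
and hypothesis (b) makes the right-hand side bounded as $x \to \infty$. Continuity on compact sub-intervals of $[0,\infty)$ then gives a global uniform bound.

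The only real subtlety is the doubly-logarithmic calibration of $\Upsilon$: simpler candidates like $\Upsilon(x) = \log(1+x)$ or $\Upsilon(x) = (1+x)^\alpha$ with $\alpha \in (0,1)$ would leave a residual $\log x$ or polynomial factor in the estimate above, and thus fail the supremum condition under hypothesis (b). Everything else is routine.
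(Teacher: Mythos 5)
Your proof is correct and follows essentially the same route as the paper: a doubly-logarithmic containment function calibrated so that $\Upsilon'(x) \asymp 1/(x\log x)$, discarding the non-positive $\mu$-term and invoking hypothesis (b) to bound $(\lambda+\rho)[e^{\Upsilon'}-1]$. The only (harmless) differences are that you give a globally explicit $C^2$ formula where the paper uses a smooth modification of $\log\log x$ near the origin, and you use the chord bound $e^t-1\leq (e-1)t$ on $[0,1]$ in place of the paper's power-series expansion.
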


\begin{proof}
	Let $\Upsilon : \bR^+ \rightarrow \bR^+$ be twice continuously differentiable with $\Upsilon(x)=\log(\log(x))$ for $x\geq e^e$ and $\Upsilon$ is zero somewhere in $[0,e^e)$. Plugging $\Upsilon'$ into $\cH$ for $x>e^e$ :
	\begin{multline*}
	\cH(x,\Upsilon'(x)) \leq (\lambda(x)+\rho(x))\left(e^{\frac{1}{x\log(x)}}-1\right)\\
	= \frac{\lambda(x)+\rho(x)}{x\log(x)}\left(\sum_{k=1}^{\infty}\frac{1}{k!}\frac{1}{(x\log(x))^{k-1}}\right) < \infty.
	\end{multline*}
\end{proof}

\begin{proof}[Proof of Theorem \ref{theorem:birth_death_immigration}]
	We apply Theorem \ref{theorem:abstract_LDP}. We start with the verification of Assumption \ref{assumption:LDP_assumption}. The space rescaled operators $A_n$ are given by
	\begin{equation*}
	A_nf(x)(x)=(\lambda_n(nx)+\rho_n(nx))\left[f\left(x+\frac{1}{n}\right) -f(x) \right]+\mu_n(nx)\left[f\left(x-\frac{1}{n}\right)-f(x)\right].
	\end{equation*}
	As a consequence the operators $H_n = \tfrac{1}{n}e^{-nf}A_ne^{nf}$ have the form
	\begin{equation*}
	H_nf(x)=\tfrac{1}{n}(\lambda_n(xn)+\rho_n(xn))\left[e^{n\left(f\left(x+\frac{1}{n}\right)-f(x)\right)}-1\right]+ \tfrac{1}{n}\mu_n(xn)\left[e^{n\left(f\left(x-\frac{1}{n}\right)-f(x)\right)}-1\right].
	\end{equation*}
	Taylor's theorem yields $H\subseteq ex-LIM_n H_n$ with $H$ given by $Hf(x) = \cH(x,f'(x))$ as in \eqref{equation:hamiltonian_birth_death} and domain $\cD(H)=C^2_c(\bR^+)$.  In particular, we have $H_\dagger = H_\ddagger = H$. 
	
	\smallskip
	
	The condition in (a) of Theorem \ref{theorem:abstract_LDP} follows by Lemma \ref{lemma:containment_function_1d_immi}. Thus, we are left to verify the condition for (b): the comparison principle. For this, we use Theorem \ref{theorem:comparison_principle_1d}, for which we need to verify Conditions  \ref{condition:basic} and \ref{condition:boundary}. 
	
	We start with the first one. Let $K \subseteq E^\circ$ be a compact set in the interior. Because both $(e^p-1)$ and $(e^{-p}-1)$ are bounded from below, we have $\lim_{p\rightarrow \pm\infty}\inf_{x\in K}\cH(x,p)=\infty$. This yields Condition \ref{condition:basic}.
	
	We proceed with Condition \ref{condition:boundary}.	Let $K\subseteq E$ a compact set with $\partial_{-}=0 \in K$. Because $\rho(0)>0$, we  have $\lim_{p\rightarrow +\infty}\inf_{x\in K}\cH(x,p)=\infty$. For $d=-$, we consider the weak condition (2). Let's pick $x_\alpha$ and $y_\alpha$ in $E$ both converging to $0$ such that $p_\alpha=\alpha(x_\alpha-y_\alpha)\rightarrow -\infty$. Then :
	\begin{align*}
	\cH(x_\alpha,p_\alpha)-\cH(y_\alpha,p_\alpha)\leq (\mu(x_\alpha)-\mu(y_\alpha))(e^{-p_\alpha}-1)\leq 0.
	\end{align*}
	Hence Condition \ref{condition:boundary} is satisfied and the comparison principle holds for the Hamilton-Jacobi equations for $H$. We conclude that the large deviation principle holds with Lagrangian rate function.
\end{proof}

\subsubsection{Growing populations, proof of Theorem \ref{theorem:LDP_growing_population}}

We first give a compact containment function.

\begin{lemma} \label{lemma:containment_function_H_positive_jumps}
	Consider the setting of Theorem \ref{theorem:LDP_growing_population}. Then there is a good containment function $\Upsilon$ for $\cH$.
\end{lemma}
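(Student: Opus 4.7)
The plan is to build $\Upsilon$ by combining two explicit pieces: a logarithmic piece $\alpha \log(1+x)$ to penalise large $x$, and a piece like $1/x$ to penalise small $x$ (since $E=(0,\infty)$ requires $\Upsilon\to\infty$ at both ends for compact sublevel sets in $E$). The specific choice I would make is
\[
\Upsilon(x) = \alpha\log(1+x) + \frac{1}{x} - m,
\qquad m := \inf_{x>0}\left(\alpha\log(1+x) + \tfrac{1}{x}\right),
\]
where $\alpha>0$ is the constant from hypothesis (c) of Theorem \ref{theorem:LDP_growing_population}. Because $\cJ(E)=\{\emptyset\}$ for the open interval $E=(0,\infty)$ (there are no boundary faces), the only Hamiltonian that needs to be controlled is the single $\cH$ from \eqref{eqn:Hamiltonian_positve_jumps}.

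Verifying ($\Upsilon$a)--($\Upsilon$c) is routine: $\Upsilon$ is smooth and non-negative with a unique zero at the minimiser, and $\Upsilon(x)\to\infty$ both as $x\to 0^+$ (through the $1/x$ term) and as $x\to\infty$ (through the $\log$ term), so sublevel sets are compact in $(0,\infty)$. The non-trivial requirement is ($\Upsilon$d), i.e.\ the bound $\sup_{x>0}\cH(x,\Upsilon'(x))<\infty$. The choice of $\Upsilon$ is engineered so that
\[
\Upsilon'(x) = \frac{\alpha}{1+x} - \frac{1}{x^2} \leq \frac{\alpha}{1+x} \qquad \text{for all } x>0,
\]
and this dominance by $\alpha/(1+x)$ is precisely what hypothesis (c) is tailored for.

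To finish the Hamiltonian bound I would split according to the sign of $\Upsilon'(x)$. When $\Upsilon'(x)\leq 0$ (small $x$), every term $e^{k\Upsilon'(x)}-1$ is non-positive, so $\cH(x,\Upsilon'(x))\leq 0$ trivially. When $\Upsilon'(x)>0$, I apply the elementary inequality $e^y-1\leq y + \tfrac12 y^2 e^y$ for $y\geq 0$ with $y=k\Upsilon'(x)$, so that
\[
\cH(x,\Upsilon'(x)) \;\leq\; \Upsilon'(x)\sum_{k\geq 1} k\,xv_k(x) + \tfrac12 \Upsilon'(x)^2 \sum_{k\geq 1} k^2 x v_k(x)\, e^{k\Upsilon'(x)}.
\]
The first sum is bounded by $\alpha\,(1+x)^{-1}\cdot x\cdot\sup_x\sum_k kv_k(x)$, which is finite by hypothesis (b); the second is bounded by $\tfrac{\alpha^2}{2(1+x)^2}\sum_k k^2 x v_k(x) e^{\alpha k/(1+x)}$, which is finite by hypothesis (c). Taking the supremum over $x$ gives ($\Upsilon$d).

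The main (and really only) obstacle is lining up the growth rate of $\Upsilon'$ with the weight $\alpha/(1+x)$ inside the exponent in hypothesis (c); once that matching is achieved by the $\alpha\log(1+x)$ term, and once the quadratic Taylor-type estimate $e^y-1\leq y+\tfrac12 y^2 e^y$ is used to extract both a linear-in-$k$ (controlled by (b)) and a quadratic-in-$k$ (controlled by (c)) contribution, the rest is bookkeeping. The $1/x$ term is a conservative choice at the left end because the Hamiltonian is automatically $\leq 0$ whenever $\Upsilon'$ is non-positive; any function blowing up at $0$ with non-positive derivative there would do.
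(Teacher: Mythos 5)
Your proof is correct, and it buys a little more self-containedness than the paper's argument while relying on the same two mechanisms. The paper also splits the problem into ``control at $0$'' and ``control at $\infty$'': near $0$ it takes $\Upsilon_0(x)=-\log x$, whose negative derivative makes every term $e^{k\Upsilon_0'(x)}-1$ non-positive (exactly your Case 1), and at infinity it invokes Proposition \ref{proposition:compact_containment_at_infinity} (a citation to Example 4.24 of \cite{FK06}) with $\Upsilon_\infty(x)=\delta\log(1+x^2)$, then glues the two pieces together with a smooth interpolation on $(\tfrac12,1)$. You instead take the single additive function $\alpha\log(1+x)+\tfrac1x-m$, which removes the gluing step, and you replace the appeal to the cited proposition by the explicit estimate $e^y-1\le y+\tfrac12 y^2e^y$, which is precisely the computation hiding inside that proposition; your choice of the coefficient $\alpha$ equal to the constant in hypothesis (c) makes the exponent $k\Upsilon'(x)\le \alpha k/(1+x)$ match the integrability assumption exactly, whereas the paper's version only asserts existence of a sufficiently small $\delta$. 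The one point worth making explicit in a written version is that the infimum $m$ is attained (so that ($\Upsilon$a) holds with an actual zero), which follows since $\alpha\log(1+x)+1/x$ is continuous and tends to $+\infty$ at both ends of $(0,\infty)$; everything else in your verification of ($\Upsilon$a)--($\Upsilon$d) is sound.
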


\begin{proof}
	We construct a containment function for the Hamiltonian by piecing together two parts that control the behaviour at $0$ and at infinity respectively.
	
	\textbf{Control at $0$:} Pick $\Upsilon_0(x) = - \log x$. For $x \leq 1$, we have
	\begin{equation*}
	\cH(x,\Upsilon'_0(x)) = \sum_{k = 1}^\infty x v_k(x)  \left[e^{-kx^{-1}} - 1 \right] \leq 0.
	\end{equation*}
	\textbf{Control at $\infty$:} To control the behaviour at infinity, we consider Proposition \ref{proposition:compact_containment_at_infinity}. For this we rewrite the Hamiltonian as
	\begin{equation*}
	\cH(x,p) = \sum_{k=1}^\infty x v_k(x) \left[e^{kp}-1\right] = p \sum_{k=1}^\infty k v_k(x) x  + \sum_{k=1}^\infty x v_k(x) \left[e^{kp} - kp -1\right] 
	\end{equation*}
	so that $b(x) = \sum_{k=1}^\infty k v_k(x) x$ and $\nu(x,\cdot) = \sum_{k=1}^\infty v_k(x) x \delta_{\{k\}}$. Thus, the bounds on $b$ and $\nu$ in Proposition \ref{proposition:compact_containment_at_infinity} follow from Assumptions (b) and (c) on the functions $v_k$.
	
	We obtain that there is a function $\Upsilon_\infty$ such that $\sup_x H(x,\Upsilon_\infty'(x)) < \infty$.
	
	\textbf{Combining the bounds:} Let $\Upsilon : (0,\infty) \rightarrow \bR^+$ be a twice continuously differentiable function that is equal to $\Upsilon_0$ on the interval $(0,\frac{1}{2})$ and equal to $\Upsilon_\infty$ on $(1,\infty)$ and is equal to $0$ somewhere in $(\frac{1}{2},1)$.
	
	As $\Upsilon$ is twice continuously differentiable, $\cH$ is continuous and
	\begin{equation*}
	\sup_{x \in (0,\frac{1}{2}) \cup (1,\infty)} \cH(x,\Upsilon'(x)) < \infty
	\end{equation*}
	we find $\sup_x \cH(x,\Upsilon'(x)) < \infty$.
\end{proof}

\begin{proof}[Proof of Theorem \ref{theorem:LDP_growing_population}]
	We verify the conditions for Theorem \ref{theorem:abstract_LDP}. We start by checking Assumption \ref{assumption:LDP_assumption}.	First of all, the generator of the process $X_n(t)$ is given by
	\begin{equation*}
	A_nf(x) =	n \sum_{k=1}^\infty x v_k\left(nx\right) \left[f\left(x+\frac{k}{n}\right) - f(x)\right].
	\end{equation*}
	Thus,
	\begin{equation*}
	H_n f(x) = \frac{1}{n} e^{-nf(x)} \left(A_n e^{nf}\right)(x) = \sum_{k=1}^\infty x v_k\left(nx\right) \left[e^{n \left(f\left(x+\frac{k}{n}\right) - f(x)\right)} - 1\right].
	\end{equation*}
	and we obtain by \eqref{eqn:LDP_growing_population_convergence_condition} that $H \subseteq \LIM H_n$, where $H$ is the operator $\cD(H) = C_c^2(E)$, $Hf(x) = \cH(x,f'(x))$. We thus consider the setting that $H_\dagger = H_\ddagger = H$.
	
	\smallskip
	
	The condition in (a) of Theorem \ref{theorem:abstract_LDP} follows from Lemma \ref{lemma:containment_function_H_positive_jumps}. We thus proceed with the verification of the condition in (b).

	As before, we use Theorem \ref{theorem:comparison_principle_1d}. In this setting there is no boundary, thus, it suffices to verify Condition \ref{condition:basic} for the interior. Fix a compact set $K \subseteq (0,\infty)$. First consider $d = +$. Then assumption (a) on the functions $v_k$, we find $\lim_{p \rightarrow \infty} \inf_{x \in K} \cH(x,p) = \infty$, i.e. (1) of Condition \ref{condition:basic} is satisfied. For $d = -$, (2) of Condition \ref{condition:basic} is satisfied with $h^-(x) = - \sum_{k = 1}^\infty x v_k(x)$.
	
	Thus, the comparison principle holds by Theorem \ref{theorem:comparison_principle_1d} and the large deviation principle follows as a result.
\end{proof}

\subsubsection{SI model, proof of Theorem \ref{theorem:LDP_SI_model}}

\begin{lemma} \label{lemma:containment_function_SI_model}
	Consider the setting of Theorem \ref{theorem:LDP_SI_model}. Then there is a good containment function $\Upsilon$ for $\cH$.
\end{lemma}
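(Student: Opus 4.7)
My plan is to take the containment function $\Upsilon(x) := -\log x$ on $E = (0,1]$. I expect this to work essentially trivially because the Hamiltonian $\cH(x,p) = c(x)[e^p - 1]$ has a sign-friendly structure when the momentum is negative, and the hypotheses give $c \geq 0$.

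First I would check the three elementary conditions of Definition~\ref{definition:good_containment_function}: nonnegativity ($-\log x \geq 0$ on $(0,1]$) with $\Upsilon(1) = 0$, smoothness of $\Upsilon$ on $(0,1]$, and compactness of the sublevel sets
\[
\{x \in E : -\log x \leq c\} = [e^{-c}, 1],
\]
which is compact in $E$ for every $c \geq 0$. This last point is the real reason for the choice: since $E$ is open at $0$, I need $\Upsilon$ to blow up there, and $-\log x$ does so in a controlled way, while at the closed end $1 \in E$ no blow-up is required.

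The only step requiring any computation is ($\Upsilon$d), the uniform upper bound on $\cH(x,\Upsilon'(x))$. Since $\Upsilon'(x) = -1/x < 0$ for all $x \in (0,1]$, one has $e^{\Upsilon'(x)} - 1 \in (-1, 0)$, and combining with $c(x) \geq 0$ from the assumptions of Theorem~\ref{theorem:LDP_SI_model} yields
\[
\cH(x, \Upsilon'(x)) \;=\; c(x)\bigl[e^{-1/x} - 1\bigr] \;\leq\; 0 \qquad \text{for every } x \in (0,1].
\]
Hence $\sup_{x \in E} \cH(x, \Upsilon'(x)) \leq 0 < \infty$.

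I do not anticipate a real obstacle. The only point one might worry about is $x = 1 \in E \cap \partial E$, where the boundary Hamiltonian $\cH_+$ conceptually enters; but the hypothesis $c(1) = 0$ forces $\cH(1, p) \equiv 0$, so no special behaviour of $\Upsilon$ at $1$ is needed and the generating-set bound collapses to the single computation above.
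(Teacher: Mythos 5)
Your proposal is correct and matches the paper's approach: the paper's proof simply reuses the $\Upsilon_0(x)=-\log x$ construction from Lemma \ref{lemma:containment_function_H_positive_jumps} to handle the open boundary at $0$, which is exactly the function you use (and since $E=(0,1]$ is bounded, no separate control at infinity is needed). Your explicit verification of the sublevel-set compactness and of $\cH(x,-1/x)=c(x)[e^{-1/x}-1]\leq 0$ (valid even when $c$ is unbounded near $0$) fills in the details the paper leaves implicit.
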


\begin{proof}
	The only non-compactness issue arising for the SI model comes from the open boundary at $0$. Therefore the construction that was used in the proof Lemma \ref{lemma:containment_function_H_positive_jumps} to control the behaviour at $0$ can be used here as well.
\end{proof}

\begin{proof}[Proof of Theorem \ref{theorem:LDP_SI_model}]
	The proof is similar to that of Theorem \ref{theorem:LDP_growing_population}. The difference comes from the new boundary at $x=1$. We verify Condition \ref{condition:boundary} for the right boundary for the Hamiltonian $\cH(x,p) = c(x)\left[e^{p} - 1 \right]$. For $d = +$, i.e. momenta away from the boundary, we have for $K = [\tfrac{1}{2},1]$ that
	\begin{equation*}
	\lim_{p \rightarrow - \infty} \sup_{x \in K} \left| \cH(x,p) + c(x) \right| = 0.
	\end{equation*}
	For $d= -$, i.e. momenta towards the boundary, note that if $x_\alpha,y_\alpha \rightarrow 1$ and $p_\alpha := \alpha(x_\alpha - y_\alpha) \rightarrow \infty$, then $x _\alpha > y_\alpha$ for large $\alpha$. This implies that
	\begin{equation*}
	\cH(x_\alpha,p_\alpha) - \cH(y_\alpha,p_\alpha) = \left[c(x_\alpha) - c(y_\alpha)\right]\left[e^{p_\alpha}  - 1\right] \leq 0
	\end{equation*}
	as $c$ is decreasing in a neighbourhood of $0$.
\end{proof}

\subsubsection{Harvesting, proof of Theorem \ref{theorem:birth_death_immigration_harvesting}}

\begin{lemma} \label{lemma:containment_function_1d_immi_harvesting}
	Consider the setting of Theorem \ref{theorem:birth_death_immigration_harvesting}. Then there is a good containment function $\Upsilon$ for $\cH$.
\end{lemma}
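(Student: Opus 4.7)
The plan is to reuse the containment function constructed in Lemma~\ref{lemma:containment_function_1d_immi}, since the harvesting term only adds a non-positive contribution to $\cH_\emptyset$ when the momentum is positive, and the boundary Hamiltonian $\cH_{\partial_-}$ is harmless as it is finite on all of $\bR$.

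Concretely, I would take a twice continuously differentiable $\Upsilon:[0,\infty)\to[0,\infty)$ that equals $\log\log(x)$ for $x\geq e^e$, vanishes at some point in $[0,e^e)$, and is interpolated smoothly on $[0,e^e)$ with a bounded, non-negative derivative (in particular with $\Upsilon'(0)$ finite). Then $\Upsilon\geq 0$, has compact sublevel sets, and is $C^2$. The only items to verify are (i) $\sup_{x\in[0,\infty)} \cH_\emptyset(x,\Upsilon'(x))<\infty$, and (ii) $\cH_{\partial_-}(\Upsilon'(0))<\infty$.

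For (i), observe that $\Upsilon'(x)\geq 0$ everywhere, and for $x\geq e^e$ we have $\Upsilon'(x)=1/(x\log x)$. Thus $e^{-\Upsilon'(x)}-1\leq 0$, so, since $\mu,\beta\geq 0$, the harvesting/death term $(\mu(x)+\beta(x))[e^{-\Upsilon'(x)}-1]$ is non-positive and can be discarded. The remaining term $(\lambda(x)+\rho(x))[e^{\Upsilon'(x)}-1]$ is handled exactly as in Lemma~\ref{lemma:containment_function_1d_immi}: for $x\geq e^e$ a Taylor expansion yields
\begin{equation*}
(\lambda(x)+\rho(x))[e^{1/(x\log x)}-1]=\frac{\lambda(x)+\rho(x)}{x\log x}\sum_{k\geq 1}\frac{1}{k!\,(x\log x)^{k-1}},
\end{equation*}
which is bounded by assumption (b), $\lambda(x)+\rho(x)=\cO(x\log x)$. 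On the compact set $[0,e^e]$ the function $x\mapsto \cH_\emptyset(x,\Upsilon'(x))$ is continuous (the coefficients are continuous and $\Upsilon'$ is bounded there), hence bounded.

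For (ii), since $\Upsilon'(0)\in\bR$, we trivially have $\cH_{\partial_-}(\Upsilon'(0))=\rho(0)[e^{\Upsilon'(0)}-1]<\infty$.

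The only place that required any thought was making sure that the new boundary Hamiltonian and the added harvesting term do not destroy the bound; both turn out to be benign, so no real obstacle arises and the previous construction carries over essentially verbatim.
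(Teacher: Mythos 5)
Your proposal is correct and is essentially the paper's argument: the paper simply observes that the containment function from Lemma \ref{lemma:containment_function_1d_immi} still works, and your verification (the death/harvesting term is non-positive since $\Upsilon'\geq 0$, the birth/immigration term is controlled by the $\cO(x\log x)$ growth, and $\cH_{\partial_-}$ is trivially finite at $0$) is exactly the justification behind that observation.
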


\begin{proof}
	The containment function introduced in the proof of Lemma \ref{lemma:containment_function_1d_immi} works also in this setting. 
\end{proof}

\begin{proof}[Proof of Theorem \ref{theorem:birth_death_immigration_harvesting}]
	We verify the conditions for Theorem \ref{theorem:abstract_LDP}. We start with Assumption \ref{assumption:LDP_assumption}. The tilted operator $H_n$ for the process $t \mapsto \tfrac{1}{n} X_n(t)$ is given by:
	\begin{equation*}
	H_nf(x)=
	\begin{cases}
	\tfrac{1}{n}(\lambda_n(x) + \rho_n(x))\left[e^{f(x+\frac{1}{n})-f(x)}-1\right]+\tfrac{1}{n}(\mu_n(x) + \beta_n(x))\left[e^{f(x-\frac{1}{n})-f(x)} -1\right] & \text{if } x>0, \\
	\tfrac{1}{n}\mu_n(x) \left[e^{f(0+\frac{1}{n})-f(0)} -1\right] &  \text{if } x=0. \\
	\end{cases}
	\end{equation*}
	Taking point-wise limits, this yields two limiting Hamiltonians: $\cH_\emptyset$ and $\cH_{\partial_-}$ given by \ref{equation:hamiltonian_interior_harvest} and \ref{equation:hamiltonian_zero_harvest} respectively. The operators $H_\dagger$ and $H_\ddagger$ defined in Definition \ref{definition:generating_Hamiltonians} with domain $C^2_c(\bR^+)$ are then
	\begin{equation}
	\cH_\dagger(x,p)=
	\begin{cases}
	\cH_\emptyset(x,p)&\text{if }x>0\\
	\rho(0)\left[e^p-1\right]+(\beta(0)\left[e^{-p}-1\right]\vee 0) & \text{if }x=0\\
	\end{cases}
	\end{equation}
	\begin{equation}
	\cH_\ddagger(x,p)=
	\begin{cases}
	\cH_\emptyset(x,p)&\text{if }x>0\\
	\rho(0)\left[e^p-1\right]+(\beta(0)\left[e^{-p}-1\right]\wedge 0) & \text{if }x=0\\
	\end{cases}
	\end{equation}
	Next, we check that point-wise convergence can be strengthened to $H_\dagger \subseteq ex-\subLIM_n H_n$. Pick $f \in C_c^2(\bR^+)$ and write $f_n := f$ and $g_n := H_n f_n$. By our choice of $f_n,g_n$ \eqref{eqn:convergence_condition_sublim_uniform_gn} and \eqref{eqn:convergence_condition_sublim_constants} are immediate. We show that for $z_n \rightarrow z$ that  $\limsup_n g_n(z_n) \leq H_\dagger f(z)$.
	
	Clearly, if $z \neq 0$ then $g_n(z_n) \rightarrow H_\emptyset f(z) = H_\dagger f(z)$. If $z = 0$, then the values $g_n(z_n)$ depend on whether $z_n = 0$ or not. In either case, a limiting upper bound is given by the maximum of the individual limits: $H_\emptyset f(0) \vee H_{\partial_-} f(0) = H_\dagger f (0)$. We conclude that indeed $\limsup_n g_n(z_n) \leq H_\dagger f(z)$.
	
	As $f$ was arbitrary, we conclude $H_\dagger \subseteq ex-\subLIM_n H_n$. Similarly one proves $H_\ddagger \subseteq ex-\superLIM_n H_n$
	
	\smallskip
	
	All other steps of the proof follow that of Theorem \ref{theorem:birth_death_immigration}. The verification of the comparison principle changes slightly, for this we again use Theorem \ref{theorem:comparison_principle_1d} and verify Conditions \ref{condition:basic} and \ref{condition:boundary}.
	
	For the first condition of \ref{condition:basic} as well as \ref{condition:boundary}, note that as $\lambda, \mu > 0$ on the interior of $E$, $\rho$ and $\alpha$ are continuous and satisfy $\rho(0),\alpha(0) > 0$, we have for each compact set $K \subseteq E$ that
	\begin{equation*}
	\lim_{|p| \rightarrow \infty} \inf_{x \in K} \cH_\emptyset(x,p) = \infty.
	\end{equation*}
	
	For the second condition of \ref{condition:basic}, note that the boundary at $0$ is a left boundary, i.e. $s = -$. By \eqref{equation:hamiltonian_interior_harvest} and \eqref{equation:hamiltonian_zero_harvest}, we have
	\begin{equation*}
	\cH_\emptyset(0,p) - \cH_{\partial_-}(0,p) = \beta(0) \left[e^{-p} - 1 \right].
	\end{equation*}
	As $\beta(0) > 0$, the claim follows. We conclude that indeed Conditions \ref{condition:basic} and \ref{condition:boundary} are satisfied.
\end{proof}

\subsection{Proof of Theorem \ref{theorem:interacting_species}}

Before proving the result, we establish that there is a good containment function.

\begin{lemma} \label{lemma:containment_interacting_species}
	Consider the setting of Theorem \ref{theorem:interacting_species}. Then there is a good containment function for $\bH$.
\end{lemma}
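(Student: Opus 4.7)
The plan is to exhibit an explicit candidate containment function adapted to the growth hypothesis $r_\Gamma(x), a_i(x), b_i(x) = \cO(s(x)\log s(x))$, where $s(x) := \sum_{i=1}^d x_i$. Motivated by the choice made in Lemma \ref{lemma:containment_function_1d_immi}, and because the boundary structure of $E = [0,\infty)^d$ is fully invariant under the coordinate permutations that preserve $s$, I would set
\begin{equation*}
\Upsilon(x) := \log\log\bigl(e + s(x)\bigr).
\end{equation*}
The structural conditions ($\Upsilon$a)--($\Upsilon$c) of Definition \ref{definition:good_containment_function} would then be verified directly: $\Upsilon(0) = 0$ and $\Upsilon > 0$ elsewhere, $r \mapsto \log\log(e+r)$ is smooth on $[0,\infty)$ so $\Upsilon \in C^2(E)$, and the sublevel sets are compact because $\{s \leq c\} \cap E$ is bounded and closed for each $c$.

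The key step is ($\Upsilon$d): bounding $\cH_J(z,\nabla\Upsilon(z))$ uniformly in $z \in E_J$ and over $J \in \cJ(E)$. I would first observe that $\partial_i \Upsilon(x) = \phi'(s(x))$ for every $i$, where
\begin{equation*}
\phi'(r) := \frac{1}{(e+r)\log(e+r)} \in (0,1/e],
\end{equation*}
so $p := \nabla\Upsilon(x)$ has non-negative entries, all equal, and bounded. Then for each $J \in \cJ(E)$, I would split $\cH_J(x,p)$ into three groups: (i) harvesting contributions $b_l(x)(e^{-p_l}-1) \leq 0$ (since $p_l \geq 0$); (ii) interaction contributions $r_\Gamma(x)(e^{\Gamma\cdot p}-1)$ with $\sum_i \Gamma_i \leq 0$, which are likewise $\leq 0$ since $\Gamma\cdot p = \phi'(s(x))\sum_i \Gamma_i \leq 0$; and (iii) the remaining immigration terms and those interaction terms with $\sum_i \Gamma_i > 0$, which are the only ones that can be positive.

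For the last group, I would use the elementary bound $e^t - 1 \leq t e^t$ on $[0, 1/e]$ to estimate, for $\sum_i \Gamma_i > 0$,
\begin{equation*}
r_\Gamma(x) (e^{\Gamma \cdot p} - 1) \leq e^{1/e}\bigl(\textstyle\sum_i \Gamma_i\bigr) \, \frac{r_\Gamma(x)}{(e+s(x))\log(e+s(x))},
\end{equation*}
and similarly $a_k(x)(e^{p_k}-1) \leq e^{1/e} \, a_k(x)/[(e+s(x))\log(e+s(x))]$. Under the hypotheses $r_\Gamma(x) = \cO(s(x)\log s(x))$ (for $\sum_i \Gamma_i > 0$) and $a_k(x) = \cO(s(x)\log s(x))$ at infinity, and using continuity on compact sets to handle bounded $x$, each of these quantities is uniformly bounded in $x \in E$. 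Since $T_{int}$ is finite by Condition \ref{condition:interacting_species}, the total positive contribution is finite, giving
\begin{equation*}
\sup_{x \in E_J} \cH_J\bigl(x, \nabla\Upsilon(x)\bigr) < \infty.
\end{equation*}
Finally, because $\cJ(E) \subseteq \cP(\{1,\dots,d\})$ is a finite set, taking the maximum over $J$ preserves finiteness, completing the verification. I anticipate no serious obstacle: the construction is essentially the one-dimensional choice $\log\log$ applied to $s(x)$, and the matching of the $\cO(s\log s)$ growth of the rates against the $\cO(1/(s\log s))$ decay of $\nabla\Upsilon$ is exactly the calibration performed in Lemma \ref{lemma:containment_function_1d_immi}.
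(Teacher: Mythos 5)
Your proposal is correct and follows essentially the same route as the paper, which takes $\Upsilon$ equal to $\log\log s(x)$ for large $s(x)$ and then repeats the calibration of the $\cO(s\log s)$ rate growth against the $\cO(1/(s\log s))$ gradient decay from Lemma \ref{lemma:containment_function_1d_immi}; your version merely spells out the sign analysis for the harvesting and mass-decreasing interaction terms and the treatment of all the boundary Hamiltonians $\cH_J$, which the paper leaves implicit. (One trivial slip: for $\sum_i \Gamma_i > 1$ the argument $\Gamma\cdot p$ may exceed $1/e$, so the constant in your bound should be $e^{(\sum_i \Gamma_i)/e}$ rather than $e^{1/e}$ — harmless, since $T_{int}$ is finite.)
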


\begin{proof}
	Let $s(x) = \sum x_i$ and let $\Upsilon$ be twice continuously differentiable and equal to $\log \log s(x)$ for $x$ with $s(x) \geq e^e$. The result then follows like Lemma \ref{lemma:containment_function_1d_immi}.
\end{proof}

\begin{proof}[Proof of Theorem \ref{theorem:interacting_species}]
	The result follows as in the proof of Theorem \ref{theorem:birth_death_immigration_harvesting}, using the containment function of Lemma \ref{lemma:containment_interacting_species} and the comparison principle obtained in Theorem \ref{theorem:comparison_principle_multid}.
\end{proof}

\appendix

\section{Proof of Proposition \ref{proposition:comparison_conditions_on_H} and Lemma \ref{lemma:control_on_H}} \label{appendix:abstract_comparison_verification}

The proof of Proposition \ref{proposition:comparison_conditions_on_H} is a variant of Proposition A.9 in \cite{CoKr17}, itself inspired a combination of Lemma 9.3 in \cite{FK06} and Lemma 2.3 in \cite{DFL11}. We reprove this result here as the boundary conditions introduces some additional issues involving the containment function that can be taken care of using the following lemma.

\begin{lemma} \label{lemma:elementary_min_bound}
	Let $a_1,\dots,a_k$ and $\delta_1,\dots,\delta_k$ be constants in $\bR$. Then we have
	\begin{equation*}
	\min_i \left\{a_i + \delta_i \right\} \leq \min_i a_i + \max_i \delta_i
	\end{equation*}
\end{lemma}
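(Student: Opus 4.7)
The plan is to prove the inequality by bounding each term in the minimum on the left uniformly and then taking a minimum. First I would observe that for every index $i \in \{1,\dots,k\}$ we trivially have $\delta_i \leq \max_{j} \delta_j$, and hence
\begin{equation*}
a_i + \delta_i \leq a_i + \max_{j} \delta_j.
\end{equation*}
Taking the minimum over $i$ on both sides, and using that the constant $\max_{j} \delta_j$ factors out of the minimum, yields
\begin{equation*}
\min_i\{a_i + \delta_i\} \leq \min_i\bigl\{a_i + \max_{j} \delta_j\bigr\} = \min_i a_i + \max_{j} \delta_j,
\end{equation*}
which is the claimed inequality.

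There is no real obstacle: the lemma is a one-line consequence of the monotonicity of $\min$ under pointwise inequalities. The only conceptual point worth noting is that equality need not hold; indeed, if the index $i^*$ attaining $\min_i a_i$ differs from the index $j^*$ attaining $\max_j \delta_j$, then the right-hand side $a_{i^*} + \delta_{j^*}$ is typically strictly larger than $\min_i(a_i + \delta_i)$. Since the lemma is used as an auxiliary bound in the proof of Proposition \ref{proposition:comparison_conditions_on_H} to absorb the containment-function error terms, this one-sided inequality is exactly what is needed.
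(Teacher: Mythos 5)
Your proof is correct and is essentially the same elementary argument as in the paper: you bound $\delta_i\leq\max_j\delta_j$ termwise and then take the minimum, while the paper evaluates at the minimizer of $a_i$; both are immediate consequences of the definitions of $\min$ and $\max$.
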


\begin{proof}
	Let $j,l$ be such that $a_j + \delta_j = \min_i \left\{a_i + \delta_i \right\}$ and $a_l = \min_i a_i$.
	Then, we have $\min_i \left\{a_i + \delta_i \right\} = a_j + \delta_j  \leq a_l + \max_i \delta_i  = \min_i a_i + \max_i \delta_i$.
\end{proof}

\begin{proof}[Proof of Proposition \ref{proposition:comparison_conditions_on_H}]
	Following the proof of Lemma A.8 in \cite{CoKr17}, we can extend the operators $H_\dagger,H_\ddagger$ to contain functions of the type
	\begin{equation*}
	x  \mapsto (1-\varepsilon) \alpha \Psi(x,y) + \varepsilon \Upsilon(x) + c, \qquad y \mapsto -(1+\varepsilon) \alpha \Psi(x,y) - \varepsilon \Upsilon(x) + c,
	\end{equation*}
	with $c \in \bR$ and $\alpha,\varepsilon > 0$, respectively. Let $x_{\alpha,\varepsilon},y_{\alpha,\varepsilon} \in E$ such that  \eqref{eqn:comparison_principle_proof_choice_of_sequences} is satisfied. Then, for all $\alpha$ we obtain that
	\begin{align}
	& \sup_x u(x) - v(x) \notag\\
	& = \lim_{\varepsilon \rightarrow 0} \sup_x \frac{u(x)}{1-\varepsilon} - \frac{v(x)}{1+\varepsilon} \notag\\
	& \leq \liminf_{\varepsilon \rightarrow 0} \sup_{x,y} \frac{u(x)}{1-\varepsilon} - \frac{v(y)}{1+\varepsilon} -  \alpha\Psi(x,y) - \frac{\varepsilon}{1-\varepsilon} \Upsilon(x) - \frac{\varepsilon}{1+\varepsilon}\Upsilon(y) \notag\\
	& = \liminf_{\varepsilon \rightarrow 0} \frac{u(x_{\alpha,\varepsilon})}{1-\varepsilon} - \frac{v(y_{\alpha,\varepsilon})}{1+\varepsilon} - \alpha \Psi(x_{\alpha,\varepsilon},y_{\alpha,\varepsilon}) - \frac{\varepsilon}{1-\varepsilon}\Upsilon(x_{\alpha,\varepsilon}) -\frac{\varepsilon}{1+\varepsilon}\Upsilon(y_{\alpha,\varepsilon}) \notag \\
	& \leq \liminf_{\varepsilon \rightarrow 0} \frac{u(x_{\alpha,\varepsilon})}{1-\varepsilon} - \frac{v(y_{\alpha,\varepsilon})}{1+\varepsilon}, \label{eqn:basic_inequality_on_sub_super_sol}
	\end{align}
	as $\Upsilon$ and $\Psi$ are non-negative functions. Since $u$ is a sub-solution to $f - \lambda H_\dagger f = h$ and $v$ is a super-solution to $f - \lambda H_\ddagger f = h$, we find by our particular choice of $x_{\alpha,\varepsilon}$ and $y_{\alpha,\varepsilon}$ that
	\begin{align}
	& u(x_{\alpha,\varepsilon}) - \lambda \cH_\dagger\left(x_{\alpha,\varepsilon}, (1-\varepsilon)\alpha\nabla \Psi(\cdot,y_{\alpha,\varepsilon})(x_{\alpha,\varepsilon}) + \varepsilon \nabla \Upsilon(x_{\alpha,\varepsilon})\right) \leq h(x_{\alpha,\varepsilon}), \label{eqn:ineq_comp_proof_1}\\
	& v(y_{\alpha,\varepsilon}) - \lambda \cH_\ddagger\left(y_{\alpha,\varepsilon},-(1+\varepsilon)\alpha\nabla \Psi(x_{\alpha,\varepsilon},\cdot)(y_{\alpha,\varepsilon}) - \varepsilon \nabla \Upsilon(y_{\alpha,\varepsilon})\right) \geq h(y_{\alpha,\varepsilon}).\label{eqn:ineq_comp_proof_2}
	\end{align}
	For all $z \in E$, the map $p \mapsto \cH_\dagger(z,p)$, being the maximum of convex maps, is convex. Thus, \eqref{eqn:ineq_comp_proof_1} implies that
	\begin{equation} \label{eqn:ineq_comp_proof_1_convexity}
	\begin{aligned}
	u(x_{\alpha,\varepsilon}) & \leq h(x_{\alpha,\varepsilon}) + (1-\varepsilon) \lambda \cH_\dagger(x_{\alpha,\varepsilon}, \alpha\nabla \Psi(\cdot,y_{\alpha,\varepsilon})(x_{\alpha,\varepsilon})) + \varepsilon \lambda \cH_\dagger(x_{\alpha,\varepsilon},\Upsilon'(x_{\alpha,\varepsilon}))   \\
	& \leq h(x_{\alpha,\varepsilon}) + (1-\varepsilon) \lambda H_\dagger(x_{\alpha,\varepsilon}, \alpha\nabla \Psi(\cdot,y_{\alpha,\varepsilon})(x_{\alpha,\varepsilon})) + \varepsilon \lambda c,
	\end{aligned}
	\end{equation}
	where $c := \max_i \sup_x \cH_i(x,\Upsilon'(x)) <\infty$ by the uniform bound ($\Upsilon$d).
	
	As far as the second inequality, first note that because $\Psi$ is a good penalization function, we have $- ( \nabla \Psi(x_{\alpha,\varepsilon},\cdot))(y_{\alpha,\varepsilon}) = \nabla \Psi(\cdot, y_{\alpha,\varepsilon})(x_{\alpha,\varepsilon})$. Next, we need a more sophisticated bound using the convexity of the maps $\{\cH_J\}_{J \in \cJ(E)}$. First of all, for each $J$ the convexity of $\cH_J$ yields
	\begin{multline*}
	\cH_J(y_{\alpha,\varepsilon},\alpha \nabla \Psi(\cdot, y_{\alpha,\varepsilon})(x_{\alpha,\varepsilon})) \\
	\leq \frac{1}{1+\varepsilon} \cH_J(y_{\alpha,\varepsilon},(1+\varepsilon)\alpha\nabla \Psi(\cdot, y_{\alpha,\varepsilon})(x_{\alpha,\varepsilon}) - \varepsilon \nabla \Upsilon(y_{\alpha,\varepsilon})) + \frac{\varepsilon}{1+\varepsilon} \cH_J(y_{\alpha,\varepsilon}, \nabla \Upsilon(y_{\alpha,\varepsilon})). 
	\end{multline*}
	This implies, using Lemma \ref{lemma:elementary_min_bound}, that 
	\begin{align*}
	& \cH_\ddagger(y_{\alpha,\varepsilon},\alpha \nabla \Psi(\cdot, y_{\alpha,\varepsilon})(x_{\alpha,\varepsilon})) \\
	& \leq  \min_{J:  y_{\alpha,\varepsilon} \in E_J} \left\{ \frac{1}{1+\varepsilon} \cH_J(y_{\alpha,\varepsilon},(1+\varepsilon)\alpha\nabla \Psi(\cdot, y_{\alpha,\varepsilon})(x_{\alpha,\varepsilon}) - \varepsilon \nabla \Upsilon(y_{\alpha,\varepsilon}))  \right. \\
	& \hspace{20em} + \left. \frac{\varepsilon}{1+\varepsilon} \cH_J(y_{\alpha,\varepsilon}, \nabla \Upsilon(y_{\alpha,\varepsilon})) \right\} \\
	& \leq \frac{\varepsilon}{1+\varepsilon}\cH_\ddagger(y_{\alpha,\varepsilon},(1+\varepsilon)\alpha\nabla \Psi(\cdot, y_{\alpha,\varepsilon})(x_{\alpha,\varepsilon}) - \varepsilon \nabla \Upsilon(y_{\alpha,\varepsilon})) + \frac{\varepsilon}{1+\varepsilon} c.
	\end{align*}
	Thus, \eqref{eqn:ineq_comp_proof_2} gives 
	\begin{equation} \label{eqn:ineq_comp_proof_2_convexity}
	v(y_{\alpha,\varepsilon}) \geq h(y_{\alpha,\varepsilon}) + \lambda (1+\varepsilon) \cH_\ddagger(y_{\alpha,\varepsilon},\alpha\nabla\Psi(\cdot,y_{\alpha,\varepsilon})(x_{\alpha,\varepsilon})) - \varepsilon \lambda c.
	\end{equation}
	By combining \eqref{eqn:basic_inequality_on_sub_super_sol} with \eqref{eqn:ineq_comp_proof_1_convexity} and \eqref{eqn:ineq_comp_proof_2_convexity}, we find
	\begin{align} 
	& \sup_x u(x) - v(x) \nonumber\\
	& \leq \liminf_{\varepsilon \rightarrow 0} \liminf_{\alpha \rightarrow \infty} \left\{ \frac{h(x_{\alpha,\varepsilon})}{1 - \varepsilon} - \frac{h(y_{\alpha,\varepsilon})}{1+\varepsilon} \right.  \label{eqn:eqn:comp_proof_final_bound:line1}\\
	& \qquad + \lambda  \left( \frac{\varepsilon}{1-\varepsilon} + \frac{\varepsilon}{1+\varepsilon} \right) c  \label{eqn:eqn:comp_proof_final_bound:line2}\\
	& \left. \qquad +  \lambda \left[\cH_\dagger(x_{\alpha,\varepsilon},\alpha\nabla\Psi(\cdot,y_{\alpha,\varepsilon})(x_{\alpha,\varepsilon})) - \cH_\ddagger(y_{\alpha,\varepsilon},\alpha\nabla\Psi(\cdot,y_{\alpha,\varepsilon})(x_{\alpha,\varepsilon}))\right] \vphantom{\sum} \right\}.\label{eqn:eqn:comp_proof_final_bound:line3}
	\end{align}
	The term \eqref{eqn:eqn:comp_proof_final_bound:line3} vanishes by assumption, whereas \eqref{eqn:eqn:comp_proof_final_bound:line2} vanishes as $\varepsilon \downarrow 0$. Now observe that, for fixed $\varepsilon$ and varying $\alpha$, the sequence $(x_{\alpha,\varepsilon},y _{\alpha,\varepsilon})$ takes its values in a compact set and hence admits converging subsequences. All these subsequences converge to points of the form $(z,z)$. Therefore, as $\alpha \rightarrow \infty$, we find
	\[
	\liminf_{\varepsilon \rightarrow 0} \liminf_{\alpha \rightarrow \infty}  \frac{h(x_{\alpha,\varepsilon})}{1 - \varepsilon} - \frac{h(y_{\alpha,\varepsilon})}{1+\varepsilon} \leq \liminf_{\varepsilon \rightarrow 0} \vn{h} \frac{2\varepsilon}{1-\varepsilon^2} = 0,
	\]
	giving that also the term in \eqref{eqn:eqn:comp_proof_final_bound:line1} converges to zero.

	\smallskip
	
	We conclude that the comparison principle holds for $f - \lambda H f = h$.
\end{proof}

\begin{proof}[Proof of Lemma \ref{lemma:control_on_H}]
	Using that $v$ is a super-solution to $f - \lambda H_\ddagger f = h$, we find that it is a super solution to the equation $f - \lambda \hat{H}_\ddagger f = h$, where $\hat{H}_\ddagger$ is super-extension, as above, of $H_\dagger$ that includes functions of the type $y \mapsto (-(1+\varepsilon)\alpha \Psi(x,y)- \varepsilon \Upsilon(y)$ in its domain, cf. Lemma A.8 of \cite{CoKr17}. It follows that for the points $(x_{\alpha,\varepsilon},y_{\alpha,\varepsilon})$, we have
	\begin{equation*}
	\cH_\ddagger(y_{\alpha,\varepsilon},(1+\varepsilon)\alpha \nabla \Psi(\cdot, y_{\alpha,\varepsilon})(x_{\alpha,\varepsilon}) - \varepsilon \nabla \Upsilon(y_{\alpha,\varepsilon})) \leq \frac{v(y_{\alpha,\varepsilon}) - h(y_{\alpha,\varepsilon})}{\lambda} \leq  \frac{\vn{v-h}}{\lambda}.
	\end{equation*}
	Similar to the use of the convexity of the functions $\cH_J$ in the proof of Proposition \ref{proposition:comparison_conditions_on_H}, we find
	\begin{multline*}
	\cH_\ddagger(y_{\alpha,\varepsilon}, \alpha \nabla \Psi(\cdot, y_{\alpha,\varepsilon})(x_{\alpha,\varepsilon})) \\
	\leq \frac{1}{1+\varepsilon} \cH_\ddagger(y_{\alpha,\varepsilon},(1+\varepsilon)\alpha\nabla \Psi(\cdot, y_{\alpha,\varepsilon})(x_{\alpha,\varepsilon}) - \varepsilon \nabla \Upsilon(y_{\alpha,\varepsilon})) \\
	+ \frac{\varepsilon}{1+\varepsilon} \max_J \cH_J(y_{\alpha,\varepsilon}, \nabla \Upsilon(y_{\alpha,\varepsilon}))
	\end{multline*}
	which implies
	\begin{equation*}
	\sup_{\alpha} \cH_\ddagger\left(y_{\alpha,\varepsilon},\alpha (\nabla \Psi(\cdot,y_{\alpha,\varepsilon}))(x_{\alpha,\varepsilon})\right) \leq \frac{1}{1+\varepsilon}\left( \frac{\vn{v-h}}{\lambda} + \varepsilon \sup_{z,J} \cH_J(z,\nabla \Upsilon(z))\right) < \infty.
	\end{equation*}
	
\end{proof}

\section{A general method to constructing containment functions for the boundary at infinity} \label{appendix:containment_function}

The following result has been proven in Example 4.23 in \cite{FK06}.

\begin{proposition}  \label{proposition:compact_containment_at_infinity}
	Let $G \subseteq \bR$ be some subset and let $\cH : G \times \bR \rightarrow \bR$ defined by
	\begin{equation*}
	\cH(x,p) = \frac{1}{2} a(x) p^2 + b(x) p + \int \left[e^{kp} - kp - 1\right] \nu(x,\dd k).
	\end{equation*}
	Suppose there exists $\alpha > 0$ such that
	\begin{equation} \label{eqn:compact_containment_integral_bound}
	\sup_x \int \left| \frac{z}{1+|x|}\right|^2 e^{\alpha |z|/(1+|x|)} \nu(x,\dd z) < \infty
	\end{equation}
	and some $C > 0$ such that
	\begin{equation*}
	x b(x) \leq C(1+|x|^2), \qquad |a(x)| \leq C (1+|x|^2).
	\end{equation*}
	Then there is some $\delta > 0$ such that if $\Upsilon(x) := \delta \log (1+x^2)$, then
	\begin{equation*}
	\sup_{x \in G} \cH(x,\Upsilon'(x)) < \infty.
	\end{equation*}
\end{proposition}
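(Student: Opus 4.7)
The plan is to insert $\Upsilon(x) = \delta\log(1+x^2)$ into $\cH$ and bound each of the three contributions (diffusion, drift, jump) separately, choosing $\delta>0$ small at the end so that the exponent appearing in the jump integrand matches the hypothesis \eqref{eqn:compact_containment_integral_bound}. The whole argument is purely computational; there is no deep obstacle, only bookkeeping.

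First I would compute $\Upsilon'(x) = \frac{2\delta x}{1+x^2}$ and record the key elementary inequality
\[
|\Upsilon'(x)| \leq \frac{4\delta}{1+|x|},
\]
which follows from $1+x^2 \geq \tfrac{1}{2}(1+|x|)^2$. This is the estimate that will make the exponent in the jump term controllable in terms of $\alpha$ and $\delta$, and also gives $\Upsilon'(x)^2 \leq 16\delta^2/(1+|x|)^2$.

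Now handle the three pieces of $\cH(x,\Upsilon'(x))$ in turn. For the drift term, write
\[
b(x)\Upsilon'(x) = \frac{2\delta}{1+x^2}\bigl(x\,b(x)\bigr) \leq \frac{2\delta}{1+x^2}\cdot C(1+|x|^2) \leq 2\delta C,
\]
using the hypothesis $xb(x) \leq C(1+|x|^2)$. For the diffusion term, combine $|a(x)| \leq C(1+|x|^2) \leq 2C(1+|x|)^2$ with the bound on $\Upsilon'(x)^2$ to get $\tfrac12 a(x)\Upsilon'(x)^2 \leq 16 C \delta^2$, uniformly in $x$. For the jump term, apply the elementary inequality $e^u - u - 1 \leq u^2 e^{|u|}$ valid for all $u\in\bR$: with $u = k\Upsilon'(x)$ and the choice $\delta \leq \alpha/4$ one has $|u| \leq \alpha|k|/(1+|x|)$ and therefore
\[
e^{k\Upsilon'(x)} - k\Upsilon'(x) - 1 \leq k^2 \Upsilon'(x)^2\, e^{|k\Upsilon'(x)|} \leq 16\delta^2\,\Big|\tfrac{k}{1+|x|}\Big|^2 e^{\alpha|k|/(1+|x|)}.
\]
Integrating against $\nu(x,\dd k)$ and invoking the hypothesis \eqref{eqn:compact_containment_integral_bound} gives a bound uniform in $x$.

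Summing the three estimates yields $\sup_{x\in G} \cH(x,\Upsilon'(x)) < \infty$, as required. The only thing to watch is that $\delta>0$ be chosen simultaneously positive and $\leq \alpha/4$, which is trivial; everything else is explicit. The one step worth double-checking is the passage $1+x^2 \geq \tfrac12(1+|x|)^2$, since it is the hinge that reduces the polynomially weighted gradient bound to the $(1+|x|)^{-1}$ scale at which \eqref{eqn:compact_containment_integral_bound} is formulated.
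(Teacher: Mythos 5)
Your proof is correct, and it is essentially the standard computation that the paper delegates to \cite{FK06} (Example 4.24) rather than writing out: bound $|\Upsilon'(x)|$ by a constant times $(1+|x|)^{-1}$, use the one-sided drift bound and the quadratic bound on $a$, and control the jump integrand via $e^u-u-1\leq u^2e^{|u|}$ with $\delta\leq\alpha/4$ so that hypothesis \eqref{eqn:compact_containment_integral_bound} applies. All the individual estimates check out, including the correct use of the merely one-sided hypothesis $xb(x)\leq C(1+|x|^2)$ against the positive factor $2\delta/(1+x^2)$.
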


\section{Convex analysis} \label{appendix:convex_analysis}

\begin{definition}
	Let $\phi : \bR^d \rightarrow (-\infty,\infty]$ be a convex function. We write
	\begin{equation*}
	\partial \phi(p) := \left\{v \in \bR^d \, \middle| \, \phi(p') \geq \phi(p) + \ip{p' - p}{v} \right\},
	\end{equation*}
	for the (set-valued) sub-differential of $\phi$ at $p$.
\end{definition}

\begin{lemma}[Theorem 23.4 in \cite{Ro70}] \label{lemma:empty_subdifferential}
	If $\phi(p) = \infty$ then $\partial \phi(p) = \emptyset$.
\end{lemma}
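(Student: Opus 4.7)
The plan is straightforward since the lemma follows directly by unpacking the definition of the subdifferential and exploiting the presence of a point where $\phi$ is finite. The convex function $\phi$ must implicitly be assumed proper (i.e.\ not identically equal to $+\infty$), since otherwise the subgradient inequality is vacuously satisfied by every $v \in \bR^d$. This is a standing convention in Rockafellar's book, so I would simply cite it or state it at the outset.

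First I would fix a point $p_0 \in \bR^d$ with $\phi(p_0) < \infty$, available by properness of $\phi$. Then I would argue by contradiction: suppose $v \in \partial\phi(p)$ for some $v \in \bR^d$, where $\phi(p) = \infty$. By the defining inequality of the subdifferential applied with $p' = p_0$, we obtain
\begin{equation*}
\phi(p_0) \geq \phi(p) + \ip{p_0 - p}{v} = \infty + \ip{p_0 - p}{v} = \infty,
\end{equation*}
which contradicts $\phi(p_0) < \infty$. Hence no such $v$ exists, and $\partial \phi(p) = \emptyset$.

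There is no real obstacle here; the only subtlety worth flagging in the writeup is the properness convention, so that the reader does not worry about the degenerate case $\phi \equiv \infty$. Since the statement is attributed to Theorem 23.4 of \cite{Ro70}, a one-line argument together with a citation is the natural presentation.
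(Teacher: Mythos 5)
Your argument is correct and is exactly the standard proof of the cited result: the paper itself offers no proof beyond the reference to Theorem 23.4 of \cite{Ro70}, and your one-line contradiction via the subgradient inequality at a point where $\phi$ is finite is precisely that argument. Your remark about the properness convention is the right caveat to flag, since the Lagrangians $\cL_J$ in this paper are indeed proper (they are Legendre transforms of finite convex functions, hence bounded below by an affine function and finite somewhere).
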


\begin{proposition}[Corollary 23.5.1 in \cite{Ro70}] \label{proposition:convex_conjugate_subdiff_inverse}
	Let $\phi$ be a lower semi-continuous function, and $\phi^*$ its Legendre transform. Then $v \in \partial \phi(p)$ if and only if $p \in \partial \phi^*(v)$.
\end{proposition}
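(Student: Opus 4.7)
The plan is to reduce the biconditional to the symmetric equality
\begin{equation*}
\phi(p) + \phi^*(v) = \ip{p}{v},
\end{equation*}
and then exploit the Fenchel--Moreau duality $\phi^{**} = \phi$, which is available because $\phi$ is convex (by the standing hypothesis in Appendix \ref{appendix:convex_analysis}) and lower semi-continuous.

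First I would unpack the definition of subdifferential. The condition $v \in \partial \phi(p)$ means $\phi(p') \geq \phi(p) + \ip{p'-p}{v}$ for every $p'$, which can be rewritten as $\ip{p'}{v} - \phi(p') \leq \ip{p}{v} - \phi(p)$ for all $p'$. Taking the supremum over $p'$ on the left gives $\phi^*(v) \leq \ip{p}{v} - \phi(p)$. Combined with the Fenchel inequality $\phi^*(v) \geq \ip{p}{v} - \phi(p)$ (immediate from the definition of $\phi^*$), this forces equality $\phi(p) + \phi^*(v) = \ip{p}{v}$. Conversely, starting from this equality and the Fenchel inequality applied at an arbitrary $p'$ (i.e. $\phi^*(v) \geq \ip{p'}{v} - \phi(p')$), subtraction recovers $\phi(p') \geq \phi(p) + \ip{p'-p}{v}$, so $v \in \partial\phi(p)$.

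Next I would run the same argument with $(\phi,p)$ replaced by $(\phi^*,v)$: by the identical manipulation, $p \in \partial\phi^*(v)$ is equivalent to $\phi^*(v) + \phi^{**}(p) = \ip{v}{p}$. The final step is to invoke the Fenchel--Moreau theorem for proper lower semi-continuous convex functions to conclude $\phi^{**} = \phi$, so that the two characterizing equalities coincide.

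The only genuine subtlety is the appeal to $\phi^{**} = \phi$, which in full generality requires $\phi$ to be proper in addition to convex and lower semi-continuous; the degenerate case where $\phi \equiv +\infty$ (so $\partial\phi$ is empty everywhere) is handled trivially by Lemma \ref{lemma:empty_subdifferential}, and the case where $\phi$ takes the value $-\infty$ is excluded by the codomain $(-\infty,\infty]$ in the definition. Apart from this bookkeeping, the proof is essentially a rewriting of definitions combined with biconjugation.
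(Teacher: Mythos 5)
Your argument is correct and is precisely the standard one behind the cited result: the paper itself gives no proof (it only points to Corollary 23.5.1 of \cite{Ro70}), and what you write is Rockafellar's route, namely the characterization $v \in \partial\phi(p) \Leftrightarrow \phi(p)+\phi^*(v)=\ip{p}{v}$ (his Theorem 23.5) applied symmetrically to $\phi^*$ together with the biconjugation $\phi^{**}=\phi$. Your remark about properness is also apt, since Rockafellar's corollary assumes ``closed proper convex'' while the paper's statement omits properness; in the paper's applications $\phi$ is a finite-valued Hamiltonian or a proper Lagrangian, so this is indeed only bookkeeping.
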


\section{Differential inclusions} \label{appendix:differential_inclusions}

We follow \cite{De92,Ku00}. Let $D \subseteq \bR^d$ be a non-empty set. A \textit{multi-valued mapping} $F : D \rightarrow 2^{\bR^d} \setminus \{\emptyset\}$ is a map that assigns to every $x \in D$ a set $F(x) \subseteq \bR^d$, $F(x) \neq \emptyset$.

\begin{definition}
	Let $I \subseteq \bR$ be an interval with $0 \in I$, $D\subseteq \bR^d$, $x \in D$ and $F : D \rightarrow 2^{\bR^d} \setminus \emptyset$ a multi-valued mapping. A function $\gamma$ such that
	\begin{enumerate}[(a)]
		\item $\gamma : I \rightarrow D$ is absolutely continuous,
		\item $\gamma(0) = x$,
		\item $\dot{\gamma}(t) \in F(\gamma(t))$ for almost every $t \in I$
	\end{enumerate}
	is  called a \textit{solution of the differential inclusion} $\dot{\gamma} \in F(\gamma)$ a.e., $\gamma(0) = x$.
\end{definition}

If we assume sufficient regularity on the multi-valued mapping $F$, we can ensure the existence of a solution to differential inclusions that remain inside $D$.

\begin{definition}
	Let $D \subseteq \bR^d$ be a non-empty set and let $F : D \rightarrow 2^{\bR^d} \setminus \{\emptyset\}$ be a multi-valued mapping.
	\begin{enumerate}[(i)]
		\item We say that $F$ is \textit{closed, compact or convex valued} if each set $F(x)$, $x \in D$ is closed, compact or convex, respectively.
		\item We say that $F$ is \textit{upper semi-continuous at $x \in D$} if for each neighbourhood $\cU$ of $F(x)$, there is a neighbourhood $\cV$ of $x$ in $D$ such that $F(\cV) \subseteq \cU$. 
		We say that $F$ is \textit{upper semi-continuous} if it is upper semi-continuous at every point.
	\end{enumerate}
\end{definition}

\begin{definition}
	Let $D \subseteq \bR^d$ be a closed non-empty set. The tangent cone to $D$ at $x$ is
	\begin{equation*}
	T_D(x) := \left\{z \in \bR^d \, \middle| \, \liminf_{\lambda \downarrow 0} \frac{d(y + \lambda z, D)}{\lambda} = 0\right\}.
	\end{equation*}
	The set $T_D(x)$ is sometimes called the  the \textit{Bouligand cotingent cone}.
\end{definition}

\begin{lemma}[Theorem 2.2.1 in \cite{Ku00}, Lemma 5.1 in \cite{De92}] \label{lemma:solve_differential_inclusion}
	Let $D \subseteq \bR^d$ be closed and let $F : D \rightarrow 2^{\bR^d} \setminus \{\emptyset\}$ satisfy
	\begin{enumerate}[(a)]
		\item $F$ has closed convex values and is upper semi-continuous;
		\item $F$ is upper semi-continuous;
		\item for every $x$, we have $F(x) \cap T_D(x) \neq \emptyset$;
		\item $F$ has bounded growth: there is some $c > 0$ such that $\vn{F(x)} = \sup\left\{|z| \, \middle| \, z \in F(x) \right\} \leq c(1 + |x|)$ for all $x \in D$.
	\end{enumerate}
	Then the differential inclusion $\dot{\gamma} \in F(\gamma)$ has a solution on $\bR^+$ for every starting point $x \in D$.
\end{lemma}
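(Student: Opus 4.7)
The plan is to construct a solution by a time-discretization scheme and pass to a limit, following the standard strategy for viability problems (e.g. Aubin--Cellina). Fix $T > 0$ and a starting point $x \in D$. For each step size $h > 0$, I would build an approximate solution $\gamma_h : [0,T] \to \bR^d$ inductively on nodes $t_k = kh$ as follows: at $\gamma_h(t_k) \in D$, the viability condition (c) supplies a vector $v_k \in F(\gamma_h(t_k)) \cap T_D(\gamma_h(t_k))$, and the definition of the Bouligand tangent cone then supplies $h'_k \in (0,h]$ and $z_k \in D$ with $|z_k - \gamma_h(t_k) - h'_k v_k| = o(h'_k)$. Setting $\gamma_h(t_k + h'_k) := z_k$ and interpolating linearly (absorbing the projection error uniformly in $h$) produces an absolutely continuous curve with values in a small neighbourhood of $D$, satisfying $\dot\gamma_h(t) \in F(\gamma_h(\tau_h(t)))$ for a piecewise constant retraction $\tau_h$ of $t$.

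Next I would derive a priori estimates. The linear growth condition (d) gives $|\dot\gamma_h(t)| \leq c(1 + |\gamma_h(\tau_h(t))|)$, so Gr\"onwall's inequality yields a uniform bound on $\sup_{t \leq T}|\gamma_h(t)|$ and hence on $\|\dot\gamma_h\|_{L^\infty([0,T])}$. Arzel\`a--Ascoli then extracts a subsequence $\gamma_{h_n} \to \gamma$ uniformly on $[0,T]$, while Dunford--Pettis (or even weak-$*$ compactness of $L^\infty$) gives $\dot\gamma_{h_n} \rightharpoonup \dot\gamma$ in $L^1([0,T])$. Since $D$ is closed and $\gamma_{h_n}$ stays arbitrarily close to $D$, the limit satisfies $\gamma(t) \in D$ for all $t$, and $\gamma(0) = x$.

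The main obstacle is verifying $\dot\gamma(t) \in F(\gamma(t))$ for a.e.\ $t$, which is the classical convergence theorem for differential inclusions. Using Mazur's lemma, $\dot\gamma(t)$ is (for a.e.\ $t$) a limit of convex combinations of $\dot\gamma_{h_n}(t)$, each lying in $F(\gamma_{h_n}(\tau_{h_n}(t)))$. Because $F$ has convex values, the convex combinations still lie in a single set $F(y_n^t)$ only up to a projection; the cleaner way is to fix $t$, note that $\gamma_{h_n}(\tau_{h_n}(t)) \to \gamma(t)$, and invoke upper semi-continuity: for any open $\cU \supseteq F(\gamma(t))$, all $F(\gamma_{h_n}(\tau_{h_n}(t))) \subseteq \cU$ eventually, so each convex combination also lies in the convex set $\overline{\mathrm{co}}\,\cU$. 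Letting $\cU$ shrink to $F(\gamma(t))$ and using that $F(\gamma(t))$ is closed and convex forces $\dot\gamma(t) \in F(\gamma(t))$.

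Finally, to extend the construction from $[0,T]$ to $\bR^+$, one observes that the Gr\"onwall bound precludes blow-up on any finite interval, so the above argument works for every $T$; a diagonal extraction over an increasing sequence $T_n \uparrow \infty$ then produces a global solution on $\bR^+$. I expect the convergence step above to be the delicate point, since it is precisely where all four hypotheses (convex values, upper semi-continuity, compatibility with $T_D$, and linear growth for compactness) come together.
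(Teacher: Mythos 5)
The paper does not actually prove this lemma: it is imported verbatim from \cite{Ku00} and \cite{De92}, so there is no internal proof to compare against. Your sketch is precisely the classical proof of the viability theorem that those references carry out (Euler polygonal approximations steered into $D$ by the tangency condition, Gr\"onwall plus Arzel\`a--Ascoli for compactness, and the convergence theorem --- Mazur's lemma combined with upper semi-continuity and closed convex values --- to pass to the limit in the inclusion), and the limit-passage step as you finally describe it is the correct one.

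The one place where your construction, as written, has a genuine gap is the very first step. Membership $v_k\in T_D(\gamma_h(t_k))$ only guarantees $d(\gamma_h(t_k)+\lambda v_k,D)=o(\lambda)$ along \emph{some} sequence $\lambda\downarrow 0$; it gives no lower bound on the admissible sub-step $h_k'$. Consequently the sum $\sum_k h_k'$ may converge to a time $t^*<T$, and your polygonal curve is then not defined on all of $[0,T]$. The standard repair (as in the proof of Lemma 5.1 in \cite{De92}) is to choose each $h_k'$ comparable to the supremum of admissible step lengths at that node, note that the nodes form a Cauchy sequence as $t_k\uparrow t^*$ thanks to the uniform velocity bound coming from (d), and derive a contradiction at the limit node, where the tangency condition again produces an admissible step of definite size; this is also where the local boundedness of $F$ enters a second time. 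With that ingredient added your argument is complete, and the diagonal extraction to $\bR^+$ at the end is routine.
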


\smallskip

\textbf{Acknowledgement} 

The authors thank an anonymous referee for careful reading of the manuscript and suggestions that improved the text. The authors also thank Jean-Ren\'{e} Chazottes for numerous helpful discussions and for reading various drafts of this paper. 
A month-long stay of RK at the École Polytechnique in Paris during which most of the work was carried out was supported by Le Centre National de la Recherche Scientifique(CNRS). RK was partly supported by the Deutsche Forschungsgemeinschaft (DFG) via RTG 2131 High-dimensional Phenomena in Probability – Fluctuations and Discontinuity.

\bibliographystyle{abbrv}
\bibliography{../KraaijBib}
\end{document}